\title[Connes' calculus for The Quantum double suspension]{Connes' calculus for the Quantum double suspension}
\author{Partha Sarathi Chakraborty}
\address{The Institute of Mathematical Sciences, CIT Campus, Taramani, Chennai
600113}
\email{parthac@imsc.res.in}
\author{Satyajit Guin}
\address{The Institute of Mathematical Sciences, CIT Campus, Taramani, Chennai
600113}
\email{gsatyajit@imsc.res.in}
\keywords{Connes calculus, Spectral triple, Quantum double suspension, DGA, Connections, Curvature}
\date{\today}
\subjclass[2000]{Primary 58B34 ; Secondary 46L87, 16E45}
\newtheorem{definition}{Definition}[section]
\newtheorem{theorem}[definition]{Theorem}
\newtheorem{lemma}[definition]{Lemma}
\newtheorem{proposition}[definition]{Proposition}
\newtheorem{corollaryl}[definition]{Corollary}
\newtheorem{remark}[definition]{Remark}
\begin{document}
\begin{abstract}
Given  a spectral triple $(\mathcal{A},\mathcal{H},D)\,$ Connes associated a canonical differential graded algebra $\,\Omega_D^\bullet(\mathcal{A})$. However, so far this has been computed for 
very few special cases. We identify suitable hypotheses on a spectral triple that helps one to compute the associated Connes' calculus for its quantum double suspension. This allows one to compute $\,\Omega_D^\bullet$ for spectral triples obtained
by iterated quatum double suspension of the spectral triple associated with a first order differential operator on a  compact smooth manifold. This gives the first systematic computation of Connes' calculus for a large family of
spectral triples.
\end{abstract}
\maketitle


\section{Introduction}
Noncommutative geometry of Connes is the study of interplay between an algebra $\mathcal A$ and a selfadjoint operator $D$, often referred as the Dirac operator represented on the same Hilbert space 
$\mathcal H$. In the last chapter of his book (\cite{4}), using these ingredients Connes also constructed a calculus $\,\Omega_D^\bullet(\mathcal{A})$. Please recall that by a calculus or a 
differential calculus one often means a differential graded algebra. It is shown in (\cite{4}) that using this calculus one can produce Hochschild
cocycle and cyclic cocycle (under certain assumptions) for Poincar\'{e} dual algebras which makes $\Omega_D^\bullet$ worth studying. Last but not the least Connes showed that in the case of 
classical spectral triple
associated to a compact Riemannian spin manifold, $\Omega_D^\bullet$ gives back the space of de-Rham forms on the manifold. This establishes $\,\Omega_D^\bullet$ as a genuine noncommutative
generalization of the classical de-Rham complex of a manifold. There are other instances of calculus as well, see for example (\cite{wor},\cite{pod},\cite{bma}).
For a better understanding of the Connes calculus it is imperative that we compute this in some cases.
 However outside the works of Connes there are very few instances (\cite{CSi},\cite{2}) where this calculus have been computed and have been put to investigation of concepts like Yang-Mills 
(\cite{CG2}). In view of this scenario, here in this article  we set ourselves with the task of computation of the Connes calculus for a certain systematic class of examples, which is by far missing till date.

The concept of quantum double suspension(QDS) of an algebra $\mathcal{A}$, denoted by $\varSigma^2 \mathcal{A}\, $, was introduced by Hong-Szymanski in (\cite{hsz}). Later quantum double suspension of a spectral triple was
introduced by Chakraborty-Sundar (\cite{3}) and a class of examples of regular spectral triple having simple dimension spectrum were constructed, useful in the context of local index formula of Connes-Moscovici (\cite{cmo}).
Note that iterating QDS on a manifold one can produce genuine noncommutative spectral triples. Under the following hypotheses

$\bullet\,\,[D,a]F-F[D,a]$ is a compact operator for all $a\in\mathcal{A}$, where $F$ is the sign of the operator $D$,

$\bullet\,\,\mathcal{H}^\infty:=\bigcap_{k\geq 1}\mathcal{D}om(D^k)$ is a left $\mathcal{A}$-module and $[D,\mathcal{A}]\subseteq\mathcal{A}\otimes\mathcal{E}nd_{\mathcal{A}}(\mathcal{H}^\infty)\subseteq
\mathcal{E}nd_{\mathbb{C}}(\mathcal{H}^\infty)$,\\
we compute $\,\Omega_D^\bullet$ for the quantum double suspended spectral triple $(\varSigma^2 \mathcal{A},\varSigma^2 \mathcal{H},\varSigma^2 D)$. 
Notable features of these hypotheses are, firstly the spectral triple associated with a first order elliptic differential operator on a manifold will
always satisfy them and secondly they are stable under quantum double suspension. Thus our results allows one to compute the Connes calculus  for spectral triples obtained  by
iteratively quantum double suspending spectral triples associated with first order differential operators on smooth compact manifolds. In particular iterated application of our construction 
on the spectral 
triple $\left(C^\infty(\mathbb{T}),L^2(\mathbb{T}),\frac{1}{i} \frac{d}{d\theta}\right)$  imply the computation of the Connes calculus for odd dimensional quantum spheres.
This  extends earlier work of (\cite{2}).

Organization of this paper is as follows. In section ($2$) we go through the definition of Connes' calculus $\,\Omega_D^\bullet$ and the quantum double suspension. Section ($3$) is devoted to the computation
of the space of forms $\,\Omega_{\varSigma^2 D}^\bullet(\varSigma^2\mathcal{A})\,$. In the last section we study compatible connections, curvatures on a {\it Hermitian} finitely generated projective 
(right)module (\cite{4}, Ch. $6$) over $\varSigma^2\mathcal{A}$. If we take $\mathcal{E}$ to be a Hermitian finitely generated projective module over $\mathcal{A}$, and denote the affine space of compatible connections by
$Con(\mathcal{E})$, then there is a canonical Hermitian finitely generated projective module over $\varSigma^2\mathcal{A}$ which we denote by $\widetilde{\mathcal{E}}$. The affine space of compatible connections on
$\widetilde{\mathcal{E}}$ is denoted by $Con(\widetilde{\mathcal{E}})$. We show that there is an affine embedding of $\,Con(\mathcal{E})$ into $\,Con(\widetilde{\mathcal{E}})$ which preserves the Grassmannian connection and
together with $Hom_\mathcal{A}\left(\mathcal{E},\mathcal{E}\otimes_\mathcal{A}\Omega_D^2(\mathcal{A})\right)$, the vector space containing the subspace of curvatures, these fit into a commutative diagram.

\medskip

\section{Preliminaries on Connes' Calculus and The Quantum Double Suspension}

In this section we recall the definition of Connes' calculus $\,\Omega_D^\bullet\,$ from (\cite{4}) and that of the quantum double suspension from (\cite{hsz}), (\cite{3}).
\begin{definition}
A {\it spectral triple} $(\mathcal{A},\mathcal{H},D)$, over an algebra $\mathcal{A}$ with involution $\star\,$ consists of the following things $:$
\begin{enumerate}
\item a $\,\star\,$-representation of $\mathcal{A}$ on a Hilbert space $\mathcal{H}$,
\item an unbounded selfadjoint operator $D$,
\item $D$ has compact resolvent and $[D,a]$ extends to a bounded operator on $\mathcal{H}$ for every $\,a \in \mathcal{A}$.
\end{enumerate}
\end{definition}

We shall assume that $\mathcal{A}$ is unital and the unit $1 \in \mathcal{A}$ acts as the identity on $\mathcal{H}$. If $|D|^{-p}$ is in the ideal of Dixmier traceable operators $\mathcal{L}^{(1,\infty)}$ then we say 
that the {\it spectral triple} is $\,p$-summable. Moreover, if there is a $\mathbb{Z}_2$-grading $\gamma\in\mathcal{B}(\mathcal{H})$ such that $\gamma$ commutes with every element of $\mathcal{A}$ and anticommutes with $D$
then the spectral triple $(\mathcal{A},\mathcal{H},D,\gamma)$ is said to be an {\it even spectral triple}.

\begin{definition}Let $\,\Omega^\bullet(\mathcal{A}) = \displaystyle\bigoplus_{k=0}^\infty \Omega^k(\mathcal{A})\,$ be the reduced universal differential graded algebra over $\mathcal{A}\, $. Here 
$\Omega^k(\mathcal{A}):=\mathcal{A}\otimes {\bar{\mathcal{A}}}^k\, $, $\, \bar{\mathcal{A}}=\mathcal{A}/\mathbb{C}\, $. The graded product is given by
\begin{eqnarray*}
&    & \left(\sum_k a_{0k}\otimes \overline{a_{1k}}\otimes \ldots \otimes \overline{a_{mk}}\right) . \left(\sum_{k^\prime}b_{0k^\prime}\otimes \overline{b_{1k^\prime}}\otimes \ldots \otimes \overline{b_{nk^\prime}}\right)\\
& := & \sum_{k,k^\prime} a_{0k}\otimes (\otimes_{j=1}^{m-1}\overline{a_{jk}})\otimes \overline{a_{mk}b_{0k^\prime}}\otimes (\otimes_{i=1}^n\overline{b_{ik^\prime}})
\end{eqnarray*}
\begin{eqnarray*}
&    & \quad+ \sum_{i=1}^{m-1}(-1)^i a_{0k}\otimes \overline{a_{1k}}\otimes \ldots \otimes \overline{a_{m-i,k}a_{m-i+1,k}}\otimes\ldots \otimes \overline{a_{mk}}\otimes (\otimes_{i=0}^n\overline{b_{ik^\prime}})\\
&    & \quad+ (-1)^ma_{0k}a_{1k}\otimes (\otimes_{j=2}^m\overline{a_{jk}})\otimes (\otimes_{i=0}^n\overline{b_{ik^\prime}})\, .
\end{eqnarray*}
for $\,\sum_k a_{0k}\otimes \overline{a_{1k}}\otimes \ldots \otimes \overline{a_{mk}}\in \Omega^m(\mathcal{A})$ and $\,\sum_{k^\prime}b_{0k^\prime}\otimes \overline{b_{1k^\prime}}\otimes \ldots \otimes \overline{b_{nk^\prime}}
\in \Omega^n(\mathcal{A})$. There is a differential $\,d\,$ acting on $\Omega^\bullet(\mathcal{A})$ which is given by
\begin{center}
$d(a_0\otimes \overline{a_1}\otimes \ldots \otimes \overline{a_k}) := 1\otimes \overline{a_0}\otimes \overline{a_1}\otimes \ldots \otimes \overline{a_k}\, \, \, \forall\, a_j \in \mathcal{A}\, ,$
\end{center}
and it satisfies the relations
\begin{enumerate}
 \item $d^2 \omega = 0\,, \, \, \forall\, \omega \in \Omega^\bullet(\mathcal{A})$,
 \item $d(\omega_1\omega_2) = (d\omega_1)\omega_2 + (-1)^{deg (\omega_1)} \omega_1 d\omega_2 \, ,\, \, \forall\, \omega_j \in \Omega^\bullet(\mathcal{A})$.
\end{enumerate}
We have a $*\,$-representation $\pi$ of $\,\Omega^\bullet(\mathcal{A})$ on $\mathcal{Q}(\mathcal{H}):=\mathcal{B}(\mathcal{H})/\mathcal{K}(\mathcal{H})$, given by
\begin{center}
$\pi(a_0\otimes \overline{a_1}\otimes \ldots \otimes \overline{a_k}) := a_0[D,a_1]\ldots [D,a_k]+\mathcal{K}(\mathcal{H})\, \, ;\, \, a_j \in \mathcal{A}\,$.
\end{center}
Let $J_0^{(k)} = \{\omega\in\Omega^k:\pi^k(\omega)=0\}$ and $J^\prime=\bigoplus J_0^{(k)}$. But $J^\prime$ fails to be a differential ideal in $\Omega^\bullet$. We consider $J^\bullet = \bigoplus J^{(k)}$ where
$J^{(k)}=J_0^{(k)}+dJ_0^{(k-1)}$. Then $J^\bullet$ becomes a differential graded two-sided ideal in $\Omega^\bullet$ and hence, the quotient $\Omega_D^\bullet=\Omega^\bullet/J^\bullet$ becomes a differential graded algebra,
called the Connes' calculus.
\end{definition}

The representation $\pi$ gives an isomorphism,
\begin{eqnarray}\label{main iso}
\Omega_D^k \cong \pi(\Omega^k)/\pi(dJ_0^{k-1})\,,\quad\forall\,k\geq 0 .
\end{eqnarray}
The differential $d$ on $\Omega^\bullet(\mathcal{A})$ induces a differential, denoted again by $d$, on the complex $\Omega^\bullet_D(\mathcal{A})$ so that we get a chain complex $(\,\Omega^\bullet_D(\mathcal{A}),d\,)$
and a chain map $\,\pi_D:\Omega^\bullet ( \mathcal{A} ) \rightarrow \Omega^\bullet_D(\mathcal{A})$ such that the following diagram 
\begin{center}
\begin{tikzpicture}[node distance=3cm,auto]
\node (Up)[label=above:$\pi_D$]{};
\node (A)[node distance=1.5cm,left of=Up]{$\Omega^\bullet(\mathcal{A})$};
\node (B)[node distance=1.5cm,right of=Up]{$\Omega^\bullet_D(\mathcal{A})$};
\node (Down)[node distance=1.5cm,below of=Up, label=below:$\pi_D$]{};
\node(C)[node distance=1.5cm,left of=Down]{$\Omega^{\bullet+1}(\mathcal{A})$};
\node(D)[node distance=1.5cm,right of=Down]{$\Omega^{\bullet+1}_D(\mathcal{A})$};
\draw[->](A) to (B);
\draw[->](C) to (D);
\draw[->](B)to node{{ $d$}}(D);
\draw[->](A)to node[swap]{{ $d$}}(C);
\end{tikzpicture} 
\end{center}
commutes.

\begin{remark}
In $($\cite{4}$)$ the definition of $\,\Omega_D^\bullet$ does not involve the projection map $\theta$ and Connes represented $\,\Omega^\bullet(\mathcal{A})$ on $\mathcal{B}(\mathcal{H})$ instead on $\mathcal{Q}(\mathcal{H})$.
However often, the explicit computation of $\,\Omega_D^\bullet$ is rather difficult, even in the particular cases. In $($\cite{2}$)$, authors have computed $\,\Omega_D^\bullet$ for the quantum $SU(2)$ and the Podle\'{s} sphere
by replacing $\mathcal{B}(\mathcal{H})$ with $\mathcal{Q}(\mathcal{H})$, i,e. following the above prescription. Justification for this has also been discussed in section $(3)$ of $($\cite{2}$)$.
\end{remark}

Now we define $\,\Omega_D^\bullet(\mathcal{A})$ for non-unital algebra $\mathcal{A}$. Notice that elements of $\,\Omega^k$ are linear combination of elements of the form $a_0da_1\ldots da_k$. For non-unital algebra
$\mathcal{A}$, one first considers the minimal unitization $\widetilde{\mathcal{A}}:=\mathcal{A}\oplus \mathbb{C}$ and embeds $\mathcal{A}$ in $\widetilde{\mathcal{A}}$ by the map $a\longmapsto (a,0)$. This makes
$\mathcal{A}$ an ideal in $\widetilde{\mathcal{A}}\,$. The map $\,(a,\lambda)\longmapsto \pi(a)+\lambda I\,$ gives a faithful representation of $\,\widetilde{\mathcal{A}}$ on $\mathcal{B}(\mathcal{H})$. Now, using the
embedding $\,\mathcal{A}\hookrightarrow\widetilde{\mathcal{A}}$, define elements of $\,\Omega^k(\mathcal{A})$ as linear combination of elements of the form $(a_0,0)d(a_1,0)\ldots d(a_k,0)$. Observe that $\,\Omega^n
(\mathcal{A})\subseteq\Omega^n(\widetilde{\mathcal{A}}\,)$ and hence $\frac{\Omega^n(\mathcal{A})}{\Omega^n(\mathcal{A})\bigcap J^n(\widetilde{\mathcal{A}}\,)}\subseteq\frac{\Omega^n(\widetilde{\mathcal{A}}\,)}
{J^n(\widetilde{\mathcal{A}}\,)}\,$ and subsequently we define $\,\Omega_D^\bullet(\mathcal{A})=\frac{\Omega^n(\mathcal{A})}{\Omega^n(\mathcal{A})\bigcap J^n(\widetilde{\mathcal{A}}\,)}$ for the nonunital case.

Now we define the concept of quantum double suspension introduced by Hong-Szymanski in (\cite{hsz}). Let $l$ denotes the left shift operator on $\ell^2(\mathbb{N})$ defined on the standard orthonormal basis $(e_n)$ 
by $l(e_n) = e_{n-1}$, $l(e_0) = 0$ and $N$ be the number operator on $\ell^2(\mathbb{N})$ defined as $N(e_n) = ne_n$. We fix the notation `$u$' throughout the article which denotes the rank one projection 
$|e_0\rangle \langle e_0| := I - l^*l$. Let $\mathcal{K}$ denotes space of compact operators on $\ell^2(\mathbb{N})$.

\begin{definition}
Let $\mathcal{A}$ be a unital $C^*$-algebra. The quantum double suspension of $\mathcal{A}$, denoted by $\varSigma^2 \mathcal{A}$, is the $C^*$-algebra generated by $a\otimes u$ and $1\otimes l$ in 
$\mathcal{A}\otimes \mathscr{T}$ where $\mathscr{T}$ is the Toeplitz algebra.
\end{definition}

There is a symbol map $\sigma : \mathscr{T} \longrightarrow C(S^1)$ which sends $l$ to the standard unitary generator $\,z$ of $C(S^1)$ and one has a short exact sequence
$$0 \longrightarrow \mathcal{K} \longrightarrow \mathscr{T} \stackrel{\sigma} \longrightarrow C(S^1) \longrightarrow 0$$
If $\rho$ denotes the restriction of $1\otimes \sigma$ to $\varSigma^2 \mathcal{A}$ then one gets the following short exact sequence
$$0 \longrightarrow \mathcal{A}\otimes \mathcal{K} \longrightarrow \varSigma^2 \mathcal{A} \stackrel{\rho} \longrightarrow C(S^1) \longrightarrow 0$$
There is a $\mathbb{C}\, $-linear splitting map $\sigma^\prime$ from $C(S^1)$ to $\varSigma^2 \mathcal{A}$ which sends $z$ to $1\otimes l$ and yields the following $\mathbb{C}\,$-vector spaces isomorphism $:$
\begin{center}
$\varSigma^2 \mathcal{A} \cong \left(\mathcal{A}\otimes \mathcal{K}\right) \bigoplus C(S^1)\,.$
\end{center}
Notice that $\sigma^\prime$ is injective since it has a left inverse $\, \rho\, $ and hence any $f \in C(S^1)$ can be identified with $1\otimes \sigma^\prime(f) \in \varSigma^2 \mathcal{A}$. For $f = \sum_n \lambda_n z^n \in 
C(S^1)\, $, we write $\, \sigma^{\prime}(f):= \sum_{n\geq 0} \lambda_nl^n + \sum_{n> 0} \lambda_{-n}l^{*n}$. The finite subalgebra, denoted by $(\varSigma^2 \mathcal{A})_{fin}$, is generated by $a\otimes T$ and 
$\sum_{0\leq n< \infty} \lambda_nl^n + \sum_{0< n< \infty} \lambda_{-n}l^{*n}$, where $a\in \mathcal{A}$ and $T\in \mathcal{B}\left(\ell^2(\mathbb{N})\right)$ is a finitely supported matrix.

\begin{definition}[\cite{3}]
For any spectral triple $(\mathcal{A},\mathcal{H},D)\,$,$\, (\varSigma^2 \mathcal{A},\varSigma^2 \mathcal{H}:=\mathcal{H}\otimes \ell^2(\mathbb{N}),\varSigma^2 D:= D\otimes I + F\otimes N)$ becomes a spectral triple 
where $F$ is the sign of the operator $D$ and $N$ is the number operator on $\ell^2(\mathbb{N})$. It is called the quantum double suspension of the spectral triple $(\mathcal{A},\mathcal{H},D)$.
\end{definition}

Notice that $\left(\mathbb{C}[z,z^{-1}],\ell^2(\mathbb{N}),N\right)$ is also a spectral triple, and for any $f \in \mathbb{C}[z,z^{-1}]\,$, we have $\,[\varSigma^2 D,1\otimes \sigma^\prime(f)] = F\otimes [N,f]$. Here
we record two conditions on a spectral triple which will be used later.
\medskip

\textbf{Conditions}~:\\
 $\hspace*{.7cm}(A)\,\,[D,a]F-F[D,a]$ is a compact operator for all $a\in\mathcal{A}\,$.\\
 $\hspace*{.7cm}(B)\,\,\mathcal{H}^\infty:=\bigcap_{k\geq 1}\mathcal{D}om(D^k)$ is a left $\mathcal{A}$-module and $\,[D,\mathcal{A}]\subseteq\mathcal{A}\otimes\mathcal{E}nd_{\mathcal{A}}(\mathcal{H}^\infty)\subseteq
\mathcal{E}nd_{\mathbb{C}}(\mathcal{H}^\infty)\,$.
\medskip

\begin{proposition}\label{justification for assumption}
These conditions are valid for the classical case where $\mathcal{A}=C^\infty(\mathbb{M})$ and $D$ is a first order differential operator. Moreover, if a spectral triple $(\mathcal{A},\mathcal{H},D)$ satisfies these
conditions then the quantum double suspended spectral triple $((\varSigma^2\mathcal{A})_{fin},\varSigma^2\mathcal{H},\varSigma^2D)$ also satisfies them.
\end{proposition}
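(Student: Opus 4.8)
The plan is to establish the two halves of the statement separately, the real work being the stability of the conditions under $\varSigma^2$. For the classical case $\mathcal{A}=C^\infty(\mathbb{M})$ with $D$ a first order elliptic differential operator on the smooth sections of a vector bundle $E\to\mathbb{M}$, I would first note that $\mathcal{D}om(D^k)=H^k(\mathbb{M},E)$, so by Sobolev embedding $\mathcal{H}^\infty=\bigcap_k H^k(\mathbb{M},E)=\Gamma^\infty(E)$, which is a finitely generated projective $C^\infty(\mathbb{M})$-module and in particular a left module; moreover $[D,f]=\sigma_D(df)$ is a zeroth order operator, i.e.\ a section of $\mathrm{End}\,E$, lying in $\Gamma^\infty(\mathrm{End}\,E)=\mathcal{E}nd_{C^\infty(\mathbb{M})}(\mathcal{H}^\infty)$, which gives (B). For (A), $F=D(1+D^2)^{-1/2}\in\Psi^0(\mathbb{M},E)$ and $[D,f]$ is of order $0$, and a symbol computation gives $[[D,f],F]\in\Psi^{-1}(\mathbb{M},E)$, which is compact on $L^2(\mathbb{M},E)$ because $\mathbb{M}$ is compact.

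For the suspension, the computational heart is the identity
\[(\varSigma^2 D)^2=D^2\otimes I+2\,|D|\otimes N+I\otimes N^2=(|D|\otimes I+I\otimes N)^2,\]
obtained from $DF=FD=|D|$ and $F^2=I$. Since $|D|\otimes I$ and $I\otimes N$ are commuting positive operators, $|\varSigma^2 D|=|D|\otimes I+I\otimes N$; hence the sign of $\varSigma^2 D$ is $\varSigma^2 F=F\otimes I$, and
\[(\varSigma^2\mathcal{H})^\infty=\bigcap_k\mathcal{D}om\!\left((\varSigma^2 D)^k\right)=\mathcal{H}^\infty\,\widehat{\otimes}\,\mathcal{S},\]
where $\mathcal{S}=\bigcap_k\mathcal{D}om(N^k)$ is the space of rapidly decreasing sequences and $\widehat{\otimes}$ the suitably completed tensor product. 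Each generator $a\otimes u$, $1\otimes l$, $1\otimes l^{*}$ of $(\varSigma^2\mathcal{A})_{fin}$ preserves $\mathcal{H}^\infty\,\widehat{\otimes}\,\mathcal{S}$, so the latter is a left $(\varSigma^2\mathcal{A})_{fin}$-module, the module half of (B).

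Both conditions for $\varSigma^2$ are then verified by reducing to the generators. For (A), put $\delta(x)=[\varSigma^2 D,x]\,\varSigma^2 F-\varSigma^2 F\,[\varSigma^2 D,x]$; a Leibniz expansion gives $\delta(xy)=[\varSigma^2 D,x][y,\varSigma^2 F]+\delta(x)y+x\,\delta(y)+[x,\varSigma^2 F][\varSigma^2 D,y]$, and $x\mapsto[x,\varSigma^2 F]$ is a derivation into $\mathcal{K}$ (it vanishes on $1\otimes l$ and $1\otimes l^{*}$, and equals $[a,F]\otimes u$ on $a\otimes u$, which is compact since $[a,F]$ is compact for any spectral triple). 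As $\varSigma^2 D$ is a spectral triple ($[\varSigma^2 D,x]$ bounded), $\delta$ is a derivation modulo $\mathcal{K}$, and on generators $\delta(1\otimes l)=0$ (because $[\varSigma^2 D,1\otimes l]=-F\otimes l$ commutes with $F\otimes I$), likewise $\delta(1\otimes l^{*})=0$, and $\delta(a\otimes u)=[[D,a],F]\otimes u$ (using $Nu=uN=0$), compact by (A) for $(\mathcal{A},\mathcal{H},D)$. For (B), the target $(\varSigma^2\mathcal{A})_{fin}\otimes\mathcal{E}nd_{(\varSigma^2\mathcal{A})_{fin}}\!\left((\varSigma^2\mathcal{H})^\infty\right)$ is a left $(\varSigma^2\mathcal{A})_{fin}$-submodule of $\mathcal{E}nd_{\mathbb{C}}\!\left((\varSigma^2\mathcal{H})^\infty\right)$ through which $[\varSigma^2 D,-]$ propagates by the Leibniz rule, so again it suffices to treat generators: writing $[D,a]=\sum_i a_i\phi_i$ with $\phi_i\in\mathcal{E}nd_{\mathcal{A}}(\mathcal{H}^\infty)$ as in (B) for $\mathcal{A}$ yields $[\varSigma^2 D,a\otimes u]=[D,a]\otimes u=\sum_i(a_i\otimes u)(\phi_i\otimes I)$ with each $\phi_i\otimes I$ manifestly $(\varSigma^2\mathcal{A})_{fin}$-linear, and $[\varSigma^2 D,1\otimes l]=-F\otimes l=-(1\otimes l)\,\varSigma^2 F$ (symmetrically for $1\otimes l^{*}$).

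I expect the main obstacle to be precisely this last point: verifying that the term $-F\otimes l$ coming from $[\varSigma^2 D,1\otimes l]$ does sit on the right-hand side of (B), since the sign $F$ need not itself be $\mathcal{A}$-linear on $\mathcal{H}^\infty$ — accommodating it is exactly what pins down the role of the sign operator in the admissible endomorphism algebra and is the feature that makes the hypotheses survive iteration. Alongside it, the functional-analytic identifications $(\varSigma^2\mathcal{H})^\infty=\mathcal{H}^\infty\,\widehat{\otimes}\,\mathcal{S}$ and the computation of $\mathcal{E}nd_{(\varSigma^2\mathcal{A})_{fin}}\!\left((\varSigma^2\mathcal{H})^\infty\right)$ need to be carried out with some care, these spaces not being Hilbert spaces.
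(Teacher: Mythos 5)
Your argument for condition (A) is correct in both halves and is essentially the paper's, differently packaged. The paper does not reduce to generators: it writes a general element of $(\varSigma^2\mathcal{A})_{fin}$ as $a\otimes T+f$ with $T\in\mathcal{S}$, $f\in\mathbb{C}[z,z^{-1}]$, notes $\mathrm{sign}(\varSigma^2D)=F\otimes 1$, and computes in one line
\begin{equation*}
[\varSigma^2D,a\otimes T+f](F\otimes 1)-(F\otimes 1)[\varSigma^2D,a\otimes T+f]=\bigl([D,a]F-F[D,a]\bigr)\otimes T+[F,a]F\otimes[N,T],
\end{equation*}
which is compact. Your derivation-modulo-compacts reduction is a legitimate substitute but buys nothing here, since the linear splitting $\mathcal{A}\otimes\mathcal{S}\oplus\mathbb{C}[z,z^{-1}]$ already makes the general element as tractable as the generators; both arguments rest on the same inputs (hypothesis (A) for $(\mathcal{A},\mathcal{H},D)$ and compactness of $[F,a]$ --- and in fact the second tensor factors $T$, $[N,T]$, $u$ are finite rank, so every term is compact for that reason alone). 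The classical case and the identification $|\varSigma^2D|=|D|\otimes I+I\otimes N$, hence $\mathrm{sign}(\varSigma^2 D)=F\otimes I$, are as in the paper.

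The gap is exactly where you place it, and you have not closed it: condition (B) for the suspension. Any endomorphism of $(\varSigma^2\mathcal{H})^\infty$ commuting with $1\otimes\mathcal{S}$ has the form $\phi\otimes 1$, and commuting further with $\mathcal{A}\otimes\mathcal{S}$ forces $\phi\in\mathcal{E}nd_{\mathcal{A}}(\mathcal{H}^\infty)$; so absorbing the term $[\varSigma^2D,1\otimes l]=-F\otimes l$ into $\varSigma^2\mathcal{A}\otimes\mathcal{E}nd_{\varSigma^2\mathcal{A}}\bigl((\varSigma^2\mathcal{H})^\infty\bigr)$ would require $F$ itself to lie in $\mathcal{A}\cdot\mathcal{E}nd_{\mathcal{A}}(\mathcal{H}^\infty)$, which already fails in the classical case, where $F$ is a nonlocal pseudodifferential operator rather than a bundle endomorphism. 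Accommodating this term is therefore not a technicality to be deferred: it requires either enlarging the admissible endomorphism algebra to include the sign operator (consistent with the way the paper later manipulates $F\mathcal{A}\otimes\mathcal{E}nd_{\mathcal{A}}(\mathcal{H}^\infty)$ in the proof of its main theorem) or reinterpreting condition (B), and your proposal does neither. You should be aware that the paper's own proof disposes of this point with the single sentence that the second condition ``follows similarly''; you have located precisely the step at which the published argument is thinnest, but as written your sketch does not establish (B)-stability either.
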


\begin{proof}
When $D$ is of order $1,\,[D,a]F-F[D,a]$ has order $-1$ and hence it is a compact operator. Now suppose $[D,a]F-F[D,a]$ is a compact operator for all $a\in\mathcal{A}\,$. To check the stability under QDS,
note that $(\varSigma^2\mathcal{A})_{fin}=\mathcal{A}\otimes\mathcal{S}\bigoplus \mathbb{C}[z,z^{-1}]$ as linear space and $sign(\varSigma^2D)=F\otimes1$. Now
\begin{eqnarray*}
&   & [\varSigma^2D,a\otimes T+f](F\otimes 1)-(F\otimes 1)[\varSigma^2D,a\otimes T+f]\\
& = & [D,a]F\otimes T+FaF\otimes[N,T]+1\otimes[N,f]-F[D,a]\otimes T-a\otimes[N,T]-1\otimes[N,f]\\
& = & [D,a]F\otimes T-F[D,a]\otimes T+[F,a]F\otimes[N,T].
\end{eqnarray*}
This says that $[\varSigma^2D,(\varSigma^2\mathcal{A})_{fin}](F\otimes 1)-(F\otimes 1)[\varSigma^2D,(\varSigma^2\mathcal{A})_{fin}]$ is also a compact operator on $\mathcal{H}\otimes\ell^2(\mathbb{N})$. The second condition
follows similarly.
\end{proof}

\begin{lemma}\label{FA and A has no intersection}
Let $\,\sigma_1,\,\sigma_2$ denote any two $2\times 2$ Pauli spin matrices. For a given spectral triple $(\mathcal{A},\mathcal{H},D)$ consider the even spectral triple $(\widetilde{\mathcal{A}},\widetilde{\mathcal{H}},
\widetilde{D},\gamma)$ where $\,\widetilde{\mathcal{H}}=\mathcal{H}\otimes\mathbb{C}^2,\,\widetilde{\mathcal{A}}=\mathcal{A}\otimes I_2,\,\widetilde{D}=D\otimes\sigma_1,\,\gamma=1\otimes\sigma_2\,$. Then
$\,\Omega_{\widetilde{D}}^\bullet(\widetilde{\mathcal{A}})\cong\Omega_D^\bullet(\mathcal{A})$.
\end{lemma}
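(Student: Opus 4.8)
The plan is to produce an explicit isomorphism between the universal calculi of $\widetilde{\mathcal{A}}=\mathcal{A}\otimes I_2$ and $\mathcal{A}$ and then check that it descends to the Connes quotients. Since $\widetilde{\mathcal{A}}\cong\mathcal{A}$ as $\star$-algebras via $a\otimes I_2\mapsto a$, we immediately get a graded differential algebra isomorphism $\Phi:\Omega^\bullet(\widetilde{\mathcal{A}})\xrightarrow{\ \sim\ }\Omega^\bullet(\mathcal{A})$ sending $(a_0\otimes I_2)\,d(a_1\otimes I_2)\cdots d(a_k\otimes I_2)$ to $a_0\,da_1\cdots da_k$. This is tautological at the level of universal forms; the entire content is to show $\Phi$ carries the Connes ideal $J^\bullet(\widetilde{D})$ onto $J^\bullet(D)$, equivalently that $\ker\pi_{\widetilde{D}}^{k}$ and $\ker\pi_D^{k}$ correspond under $\Phi$ in each degree, since $J^{(k)}=J_0^{(k)}+dJ_0^{(k-1)}$ is then automatically preserved because $\Phi$ commutes with $d$.

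So the key computation is the comparison of the two representations $\pi_D$ and $\pi_{\widetilde{D}}$. First I would record that $\widetilde{D}=D\otimes\sigma_1$ has sign (and indeed satisfies $[\widetilde D, a\otimes I_2]=[D,a]\otimes\sigma_1$), so that
\[
\pi_{\widetilde{D}}\bigl((a_0\otimes I_2)\,d(a_1\otimes I_2)\cdots d(a_k\otimes I_2)\bigr)
= a_0[D,a_1]\cdots[D,a_k]\otimes\sigma_1^{\,k}+\mathcal{K}(\widetilde{\mathcal{H}}).
\]
Now $\sigma_1^{\,k}=I_2$ if $k$ is even and $\sigma_1^{\,k}=\sigma_1$ if $k$ is odd; in either case $\sigma_1^{\,k}$ is a unitary, so $x\otimes\sigma_1^{\,k}$ is compact on $\mathcal H\otimes\mathbb C^2$ if and only if $x$ is compact on $\mathcal H$. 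Hence $\pi_{\widetilde D}^{k}(\widetilde\omega)=0 \iff \pi_D^{k}(\Phi(\widetilde\omega))=0$, i.e. $\Phi(J_0^{(k)}(\widetilde D))=J_0^{(k)}(D)$ for every $k\ge 0$. Applying $d$ and using $\Phi d=d\Phi$ gives $\Phi(dJ_0^{(k-1)}(\widetilde D))=dJ_0^{(k-1)}(D)$, so $\Phi(J^{(k)}(\widetilde D))=J^{(k)}(D)$. Therefore $\Phi$ descends to a graded algebra isomorphism $\Omega^\bullet_{\widetilde D}(\widetilde{\mathcal A})\cong\Omega^\bullet_D(\mathcal A)$, and since it intertwines the differentials it is an isomorphism of differential graded algebras. (The choice of which two Pauli matrices $\sigma_1,\sigma_2$ are used is irrelevant: only $\sigma_1^{\,k}$ being unitary, and $\sigma_2$ anticommuting with $\sigma_1$ so that $(\widetilde{\mathcal A},\widetilde{\mathcal H},\widetilde D,\gamma)$ is genuinely an even spectral triple, enter the argument.)

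I do not anticipate a serious obstacle here — the statement is essentially a bookkeeping lemma whose point is to pass from an odd spectral triple to an even one without changing $\Omega_D^\bullet$, so that later one may always work with a grading. The only place requiring a little care is the passage from "kernels of $\pi^k$ correspond" to "the differential ideals $J^\bullet$ correspond": one must invoke the explicit formula $J^{(k)}=J_0^{(k)}+dJ_0^{(k-1)}$ together with $\Phi\circ d=d\circ\Phi$ rather than try to argue directly that $\Phi$ is an isomorphism of the already-quotiented complexes. A second minor point is to confirm $a_0\otimes I_2 \mapsto a_0$ respects the involutions and unitality, so that $\Phi$ is legitimately an isomorphism of the reduced universal DGAs and not merely a linear bijection; this is immediate from $\widetilde{\mathcal A}=\mathcal A\otimes I_2$.
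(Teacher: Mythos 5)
Your argument is correct and follows essentially the same route as the paper: the whole content is the identity $\pi_{\widetilde D}\bigl((a_0\otimes I_2)d(a_1\otimes I_2)\cdots d(a_k\otimes I_2)\bigr)=a_0[D,a_1]\cdots[D,a_k]\otimes\sigma_1^{k}$ together with $\sigma_1^{k}\in\{I_2,\sigma_1\}$ being unitary, which forces the kernels $J_0^{(k)}$ (and hence $dJ_0^{(k-1)}$ and the quotients) to correspond. The paper phrases the conclusion via the identification $\Omega_D^k\cong\pi(\Omega^k)/\pi(dJ_0^{k-1})$ rather than via the ideal $J^{(k)}=J_0^{(k)}+dJ_0^{(k-1)}$, but these are the same argument; your explicit remark that $x\otimes\sigma_1^{k}$ is compact iff $x$ is compact is a welcome point the paper leaves implicit.
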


\begin{proof}
First observe that $\,\sum \widetilde{a_0}\prod_{i=1}^n [\widetilde D,\widetilde{a_i}]=(\sum a_0\prod_{i=1}^n [D,a_i])\otimes\sigma_1^n\,$ where $\widetilde{a_i}=a_i\otimes I_2$. Now
$$\sigma_1^n=\begin{cases}
               \sigma_1\quad for\,\,n\,\,odd\,,\\
               I_2\quad for\,\,n\,\,even\,.
              \end{cases}
$$ immediately shows that $\pi(\Omega^n(\widetilde{\mathcal{A}}))\cong\pi(\Omega^n(\mathcal{A}))$ for all $n\geq 1$. Since $\,\sum a_0\prod_{i=1}^n [D,a_i]\otimes\sigma_1^n=0$ implies $\prod_{i=0}^n [D,a_i]\in\pi(dJ_0^n
(\mathcal{A}))$, we have $\pi(dJ_0^n(\widetilde{\mathcal{A}}))\cong\pi(dJ_0^n(\mathcal{A}))$ for all $n\geq 1$. This completes the proof.
\end{proof}

\begin{remark}\label{essential remark}
$(a)~$ Proposition $(\,\ref{justification for assumption}\,)$ says that iterating the classical case of spectral triples canonically associated with a first order differential operator on a compact manifold, one gets a lot
of examples satifying our conditions.\\
$(b)~$ Observe that $sign(\widetilde{D})=sign(D)\otimes\sigma_1$ and hence if $(\mathcal{A},\mathcal{H},D)$ satisfies our conditions then so does $(\widetilde{\mathcal{A}},\widetilde{\mathcal{H}},
\widetilde{D},\gamma)$. For any even spectral triple it is obvious that $\,F\mathcal{A}\cap\mathcal{A}=\{0\}$ where $F=sign(D)$. Hence, Lemma $(\,\ref{FA and A has no intersection}\,)$ will guarantee that in our context,
without loss of generality, we can always take $\,F\mathcal{A}\cap\mathcal{A}=\{0\}$. Throughout the article we stick to this fact.
\end{remark}
\medskip
                      
\textbf{Notation}~: \begin{enumerate}
                     \item In this article we will work with $(\varSigma^2 \mathcal{A})_{fin}$ and denote it by $\,\varSigma^2 \mathcal{A}\,$ for notational brevity.
                     \item For all $f\in\mathbb{C}[z,z^{-1}],$ we denote $\,[N,f]$ by $\,f^\prime\,$ for notational brevity.
                     \item `$\mathcal{S}$' denotes the space of finitely supported matrices in $\mathcal{B}(\ell^2(\mathbb{N}))\, $.
                     \item Any $T=(T_{ij})\in\mathcal{S}$ is said to has order $m$ if $m\geq 1$ is the least natural number such that $T_{ij}=0$ for all $i,j>m$.
                     \item $(e_{ij})$ will denote infinite matrix with $1$ at the $ij$-th place and zero elsewhere. We call it elementary matrix.
                    \end{enumerate}
\medskip

\section{Connes' Calculus for The Quantum Double Suspension}

In this section we take a spectral triple satisfying Conditions $(A),(B)$. Because of Remark ($\,$\ref{essential remark}$\,$) we can assume $F\mathcal{A}\cap\mathcal{A}=\{0\}$. Our goal here is computation of
$\,\Omega_{\varSigma^2D}^\bullet\left((\varSigma^2\mathcal{A})_{fin}\right)\, $. Note that $(\mathcal{S},\ell^2(\mathbb{N}),N)$ is a spectral triple but $\mathcal{S}$ is non-unital. We first consider
$\Omega_N^\bullet(\mathcal{S})$, following the definition of $\,\Omega_D^\bullet\,$ for non-unital algebras. We need to compute this complex first.
\begin{lemma}\label{finite support 1}
$\pi_N\left(\Omega^n(\mathcal{S})\right) = \mathcal{S}\, \, \, $ for all $\, \, n\geq 0$.
\end{lemma}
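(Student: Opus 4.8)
The plan is to work directly with the defining formula for the representation $\pi_N$ of the universal calculus $\Omega^\bullet(\mathcal{S})$ associated to the spectral triple $(\mathcal{S},\ell^2(\mathbb{N}),N)$, where we recall $N(e_n)=ne_n$. First I would establish the base case $n=0$: by definition $\Omega^0(\mathcal{S})=\mathcal{S}$ and $\pi_N$ is just the identity inclusion, so $\pi_N(\Omega^0(\mathcal{S}))=\mathcal{S}$. For $n\geq 1$ the key computation is to understand $[N,e_{ij}]$ on elementary matrices: a direct check gives $[N,e_{ij}]=(i-j)e_{ij}$ (with the basis indexed appropriately), so in particular $[N,e_{ij}]$ is again a finitely supported matrix, hence lies in $\mathcal{S}$. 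Since $\mathcal{S}$ is an algebra and is closed under these commutators, it follows immediately that any product $a_0[N,a_1]\cdots[N,a_n]$ with $a_j\in\mathcal{S}$ lies in $\mathcal{S}$, giving the inclusion $\pi_N(\Omega^n(\mathcal{S}))\subseteq\mathcal{S}$. Note here we use that $\mathcal{S}\subseteq\mathcal{K}(\ell^2(\mathbb{N}))$ so that passing to $\mathcal{Q}(\mathcal{H})$ is harmless — finitely supported matrices are genuine operators and the quotient by compacts kills nothing we care about, which is consistent with the non-unital prescription for $\Omega_N^\bullet(\mathcal{S})$ recalled in the preliminaries.

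The substantive direction is the reverse inclusion $\mathcal{S}\subseteq\pi_N(\Omega^n(\mathcal{S}))$. It suffices to produce every elementary matrix $e_{ij}$ in the image, since these span $\mathcal{S}$ linearly. The idea is to exploit the eigenvalue $(i-j)$ appearing in $[N,e_{ij}]$: when $i\neq j$ we can write $e_{ij}=\frac{1}{i-j}[N,e_{ij}]=\frac{1}{i-j}\,\pi_N(1_{\mathcal{S}'}\otimes\overline{e_{ij}})$ — but $\mathcal{S}$ has no unit, so instead I would use that $e_{ii}e_{ij}=e_{ij}$ and form $\pi_N(e_{ii}\otimes\overline{e_{ij}})=e_{ii}[N,e_{ij}]=(i-j)e_{ij}$, exhibiting $e_{ij}\in\pi_N(\Omega^1(\mathcal{S}))$ for all $i\neq j$. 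For the diagonal elementary matrices $e_{ii}$ and to climb to higher degree $n$, I would use telescoping products: for instance $e_{ij}=e_{i,i+1}\cdot e_{i+1,j}$ and more generally factor a given $e_{ij}$ (after adjusting indices so all the intermediate commutators are between distinct rows and columns) as a product of $n$ off-diagonal elementary matrices, each of which contributes one factor $[N,\cdot]$ up to a nonzero scalar; the leading $a_0$ can be taken to be an appropriate $e_{kk}$. Diagonal entries $e_{ii}$ are handled by writing $e_{ii}=e_{i,i+1}e_{i+1,i}$ and similar, so they too are products of off-diagonal elementaries.

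The main obstacle is the bookkeeping in this factorization argument for general $n$: one must choose the chain of intermediate indices so that every commutator $[N,e_{k_\ell k_{\ell+1}}]$ that appears has $k_\ell\neq k_{\ell+1}$ (else it vanishes), while still multiplying out to the target $e_{ij}$, and one must confirm the product of the resulting nonzero integer scalars is nonzero so it can be divided out — both are elementary but require care, especially ensuring we have enough "room" in $\mathbb{N}$ to route a length-$n$ path between any two indices, which is clear since $\mathbb{N}$ is infinite. Once both inclusions are in hand the lemma follows. I expect the write-up to be short, with the off-diagonal observation $[N,e_{ij}]=(i-j)e_{ij}$ doing essentially all the work and the degree-$n$ case reduced to the degree-$1$ case by multiplicativity of $\pi_N$.
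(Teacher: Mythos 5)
Your proposal is correct and rests on exactly the same computation as the paper's proof, namely $[N,e_{ij}]=(i-j)e_{ij}$ together with telescoping products of off-diagonal elementary matrices; the paper merely organizes the bookkeeping differently, writing a general $T$ of order $l$ as $T$ times a finite-rank identity $\sum_j e_{jj}$ built out of products such as $\bigl([N,\tfrac{1}{j-l}e_{j,l}][N,\tfrac{1}{l-j}e_{l,j}]\bigr)^r$, rather than reducing to individual $e_{ij}$ by linearity. Your handling of the degree-one diagonal case via $e_{ii}=e_{i,i+1}e_{i+1,i}$ (with $a_0=e_{i,i+1}$ off-diagonal rather than a projection) closes the only spot where the "leading $e_{kk}$" phrasing would fail, so the argument is complete.
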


\begin{proof}
Let's take $n > 0$. We have to show $\pi_N\left(\Omega^n(\mathcal{S})\right) \supseteq \mathcal{S}$. Choose any $T\in \mathcal{S}$ of order $l$. If $n$ is even then take $n= 2r$. Now observe that,
\begin{eqnarray*}
T & = & \sum_{j=1}^{l-1} T\left(\left[N,\frac{1}{j-l}(e_{j,l})\right]\left[N,\frac{1}{l-j}(e_{l,j})\right] \right)^{r}
   + T\left(\left[N,\frac{1}{l-1}(e_{l,1})\right]\left[N,\frac{1}{1-l}(e_{1,l})\right] \right)^{r}.
\end{eqnarray*}
For $n=2r+1$ we can similarly write,
\begin{eqnarray*}
T & = & \sum_{j=1}^{l-1}\, T(e_{j,l})\left(\left[N,\frac{1}{l-2}(e_{l,2})\right]\left[N,\frac{1}{2-l}(e_{2,l})\right]\right)^{r}\left[N,\frac{1}{l-j}(e_{l,j})\right]\\
  &   & \quad\,\,+\, T(e_{l,1})\left[N,\frac{1}{1-l}(e_{1,l})\right] \left(\left[N,\frac{1}{l-1}(e_{l,1})\right]\left[N,\frac{1}{1-l}(e_{1,l})\right] \right)^r
\end{eqnarray*} and this completes the proof.
\end{proof}

\begin{lemma}\label{finite support 2}
$\pi_N\left(dJ_0^n(\mathcal{S})\right) = \mathcal{S}\, \, \, $ for all $\, \, n\geq 1$.
\end{lemma}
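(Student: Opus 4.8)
The goal is to show $\pi_N(dJ_0^n(\mathcal{S})) = \mathcal{S}$ for all $n \geq 1$. Since $dJ_0^n(\mathcal{S}) \subseteq \Omega^{n+1}(\mathcal{S})$, the inclusion $\pi_N(dJ_0^n(\mathcal{S})) \subseteq \mathcal{S}$ is immediate from Lemma \ref{finite support 1} (every element of $\pi_N(\Omega^{n+1}(\mathcal{S}))$ lies in $\mathcal{S}$). So the real content is the reverse inclusion $\mathcal{S} \subseteq \pi_N(dJ_0^n(\mathcal{S}))$. The plan is to exhibit, for an arbitrary $T \in \mathcal{S}$ of order $l$, an explicit element $\omega \in \Omega^n(\mathcal{S})$ with $\pi_N^n(\omega) = 0$ (so $\omega \in J_0^n(\mathcal{S})$) yet $\pi_N^{n+1}(d\omega) = T$.

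The mechanism I would use is the standard one for the universal differential: for $\omega = a_0\, da_1 \cdots da_n$ one has $d\omega = da_0\, da_1 \cdots da_n$, so $\pi_N(d\omega) = [N,a_0][N,a_1]\cdots[N,a_n]$, whereas $\pi_N(\omega) = a_0[N,a_1]\cdots[N,a_n]$. Thus to land in $J_0^n$ I want the leading factor $a_0$ to be chosen so that $a_0[N,a_1]\cdots[N,a_n] = 0$ but $[N,a_0][N,a_1]\cdots[N,a_n] \neq 0$ and in fact equals the prescribed $T$. Concretely, building on the formulas already used in Lemma \ref{finite support 1}, I would take the representative of $T$ produced there — a sum of terms of the form $T'(\text{product of }[N,e_{ij}]\text{'s})$ — and in each such term replace the prefactor $T'$ (which is itself a finitely supported matrix) by an expression $[N, S]$ for a suitable $S \in \mathcal{S}$, using that on the finitely supported matrices the map $S \mapsto [N,S]$ is surjective onto the "off-diagonal" part and, more to the point, that any $e_{ij}$ with $i\neq j$ equals $\frac{1}{i-j}[N, e_{ij}]$. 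So for instance for $n = 2r$ one replaces the outer $T$ in the Lemma \ref{finite support 1} formula by $[N, \tilde T]$ where $\tilde T$ is chosen (possible after conjugating by one more elementary matrix, or by first multiplying $T$ on the left by $e_{l,l}$ and writing $e_{l,l}$-supported pieces) so that the resulting degree-$n$ form $\omega$ has $\pi_N(\omega)=0$; then $\pi_N(d\omega)$ recovers $T$. The odd case $n = 2r+1$ is handled by the analogous modification of the second displayed formula in Lemma \ref{finite support 1}.

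The one genuine subtlety — and the step I expect to be the main obstacle — is arranging simultaneously that (i) $\omega \in J_0^n$, i.e. $\pi_N^n(\omega)=0$, and (ii) $\pi_N^{n+1}(d\omega)=T$ exactly, with no leftover correction terms coming from the Leibniz rule when one differentiates a product. The clean way around this is to work one matrix unit at a time: it suffices to realize each elementary matrix $e_{pq}$ (and each diagonal $e_{pp}$, which one writes as $e_{pp} = e_{p,p+1}e_{p+1,p}$, a product of two off-diagonal units each of which is a commutator with $N$) as $\pi_N(d\omega)$ for some $\omega \in J_0^n(\mathcal{S})$, and then take linear combinations, since $J_0^n$ and $d$ are linear and $\pi_N$ is linear. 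For a single $e_{pq}$ of "height" bounded by $l$, pad the word $da_1\cdots da_n$ with enough cancelling pairs $e_{ab}, e_{ba}$ (as in Lemma \ref{finite support 1}) so that the degrees match, and choose $a_0$ to be an elementary matrix whose row index is disjoint from the column support of $[N,a_1]$ — forcing $a_0[N,a_1]\cdots = 0$ — while $[N,a_0]$ is a nonzero multiple of $a_0$ and propagates through the word to give $e_{pq}$ up to a nonzero scalar. Verifying that these choices can always be made for every $n\geq 1$ (the parity split $n=2r$ versus $n=2r+1$, and the need for $l \geq 2$ so that two distinct indices are available, handled by artificially enlarging $l$) is the bookkeeping that makes up the body of the proof; conceptually it is the same trick as in Lemma \ref{finite support 1}, pushed one differential further.
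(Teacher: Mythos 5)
Your reduction steps (linearity, working one matrix unit at a time, padding with cancelling pairs as in Lemma \ref{finite support 1}) are fine, and the inclusion $\pi_N(dJ_0^n(\mathcal{S}))\subseteq\mathcal{S}$ is indeed immediate; but the key construction step, as you describe it, cannot work. You propose a monomial $\omega=a_0\,da_1\cdots da_n$ with $a_0$ an elementary matrix, arranged so that $a_0[N,a_1]\cdots[N,a_n]=0$ while $[N,a_0][N,a_1]\cdots[N,a_n]$ is a nonzero multiple of $e_{pq}$, and you note yourself that $[N,a_0]$ is a nonzero multiple of $a_0$. These requirements are contradictory: if $a_0=e_{ab}$ with $a\neq b$, then $[N,a_0]=(a-b)a_0$, so $a_0[N,a_1]\cdots[N,a_n]=0$ forces $[N,a_0][N,a_1]\cdots[N,a_n]=(a-b)\,a_0[N,a_1]\cdots[N,a_n]=0$; and if $a_0$ is diagonal then $[N,a_0]=0$ and again $\pi_N(d\omega)=0$. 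Hence no single monomial of this shape can lie in $J_0^n$ and have nonzero image under $\pi_N\circ d$. The same objection applies to your variant of replacing the prefactor $T'$ in the Lemma \ref{finite support 1} decomposition by $[N,\tilde T]$: you never verify that the resulting form is killed by $\pi_N$, and for $\tilde T$ a single off-diagonal matrix unit it cannot be, for the same proportionality reason. What is genuinely needed --- and what the paper's proof supplies --- is a sum of at least two monomials whose images under $\pi_N$ cancel while their images under $\pi_N\circ d$ do not; the cancellation is engineered by putting a \emph{diagonal} prefactor (killed by $[N,\cdot\,]$) in one of the terms, as in the paper's $n=1$ element $\zeta=e_{2,3}\,d\bigl(\tfrac12 e_{3,1}\bigr)-e_{2,2}\,d(e_{2,1})$, with analogous elements for higher $n$. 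Your plan never produces such a cancellation scheme, so the central step is missing.

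A secondary difference: you aim to realize every $T\in\mathcal{S}$ exactly as $\pi_N(d\omega)$, which is more than necessary. The paper instead observes that $\pi_N(dJ_0^n(M_l(\mathbb{C})))$ is a two-sided ideal in the simple algebra $M_l(\mathbb{C})$ (for $\omega\in J_0^n$ and $a$ in the algebra, $\pi_N(d(a\omega))=a\,\pi_N(d\omega)$ and $\pi_N(d(\omega a))=\pi_N(d\omega)\,a$, using $\pi_N(\omega)=0$), so exhibiting one nonzero element for each $n$ suffices. If you repair the construction with a two-term cancellation as above, you could either adopt this ideal argument or keep your matrix-unit bookkeeping; but as written the proposal does not go through.
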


\begin{proof}
Notice that $\pi_N\left(dJ_0^n(M_l(\mathbb{C}))\right)$ is an ideal in $M_l(\mathbb{C})$ for any $l$ and hence if we can produce one 
nontrivial element then $\,\pi_N\left(dJ_0^n(M_l(\mathbb{C}))\right)$ will be equal with $M_l(\mathbb{C})$ for all $\,l$.
\newline
For $n=1\, $, choose
$$\zeta  =  (e_{2,3})d\left(\frac{1}{2}(e_{3,1})\right) - (e_{2,2})d\left((e_{2,1})\right)\,.$$
For $n=2\, $, choose
$$\zeta  =  \left((e_{2,3})d\left(\frac{1}{2}(e_{3,1})\right) - (e_{2,2})d((e_{2,1}))\right)d\left(\frac{-1}{3}(e_{1,4})\right)\,.$$
For $n=3\, $, choose
$$\zeta  =  \left((e_{2,3})d\left(\frac{1}{2}(e_{3,1})\right) - (e_{2,2})d((e_{2,1}))\right)d\left(\frac{-1}{3}(e_{1,4})\right)d\left(\frac{1}{2}(e_{4,2})\right)\,.$$
For $n \geq 4\, $, choose
\begin{eqnarray*}
\zeta & = & \left((e_{2,n+1})d\left(\frac{1}{n}(e_{n+1,1})\right) - (e_{2,2})d((e_{2,1}))\right)d\left(\frac{-1}{3}(e_{1,4})\right)\bullet\\
      &   &  \,\,\,\prod_{j=4}^n d((-1)(e_{j,j+1}))d(\frac{1}{n-1}(e_{n+1,2})).
\end{eqnarray*}
It is easy to see that for all $\,n\geq 1$ these elements lie in $J_0^n \left( M_l(\mathbb{C}) \right)$. One can verify that $\pi(d\zeta) \neq 0$ in each cases. 
\end{proof}

\begin{proposition}[]\label{complex for finite support}
For $(\mathcal{S},\ell^2(\mathbb{N}),N)$ we have
\begin{enumerate}
 \item  $\Omega^n_N(\mathcal{S}) = \mathcal{S}\, \, $ for $\, n=0\, ,1\, $.
 \item  $\Omega^n_N(\mathcal{S}) = 0\, \, $ for all $\, n\geq 2$.
\end{enumerate}
\end{proposition}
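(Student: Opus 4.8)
The plan is to combine the preceding two lemmas with the isomorphism (\ref{main iso}). Recall that for each $n$ we have $\Omega^n_N(\mathcal{S}) \cong \pi_N(\Omega^n(\mathcal{S}))/\pi_N(dJ_0^{n-1}(\mathcal{S}))$, with the convention $J_0^{-1} = 0$. For $n = 0$ there is nothing to quotient, and $\pi_N(\Omega^0(\mathcal{S})) = \mathcal{S}$ trivially, giving $\Omega^0_N(\mathcal{S}) = \mathcal{S}$. For $n = 1$, Lemma \ref{finite support 1} gives $\pi_N(\Omega^1(\mathcal{S})) = \mathcal{S}$, while the denominator is $\pi_N(dJ_0^0(\mathcal{S}))$; one checks $J_0^0 = \{a \in \mathcal{S} : \pi_N(a) = 0\} = 0$ since $\pi_N$ is the identity on $\Omega^0(\mathcal{S}) = \mathcal{S}$, so $dJ_0^0 = 0$ and $\Omega^1_N(\mathcal{S}) = \mathcal{S}$.

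For $n \geq 2$, I would invoke Lemma \ref{finite support 1} for the numerator, $\pi_N(\Omega^n(\mathcal{S})) = \mathcal{S}$, and Lemma \ref{finite support 2} for the denominator, $\pi_N(dJ_0^{n-1}(\mathcal{S})) = \mathcal{S}$ (valid since $n - 1 \geq 1$). Hence the quotient $\Omega^n_N(\mathcal{S}) \cong \mathcal{S}/\mathcal{S} = 0$. This is the entire argument; it is essentially a bookkeeping assembly of the two lemmas through (\ref{main iso}).

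The only genuine subtlety — and the step I would be most careful about — is the non-unital bookkeeping. Since $\mathcal{S}$ is non-unital, by the prescription given earlier in the paper, $\Omega^\bullet_N(\mathcal{S})$ is defined as $\Omega^n(\mathcal{S})/\big(\Omega^n(\mathcal{S}) \cap J^n(\widetilde{\mathcal{S}})\big)$, working inside the unitization $\widetilde{\mathcal{S}} = \mathcal{S} \oplus \mathbb{C}$ with $N$ acting as before. I would need to check that the elements exhibited in Lemmas \ref{finite support 1} and \ref{finite support 2}, which are built from elementary matrices $e_{ij} \in \mathcal{S} \subseteq \widetilde{\mathcal{S}}$, indeed lie in $\Omega^n(\mathcal{S})$ (not merely $\Omega^n(\widetilde{\mathcal{S}})$) and that their images realize all of $\mathcal{S}$ modulo $\Omega^n(\mathcal{S}) \cap J^n(\widetilde{\mathcal{S}})$; this follows because each building block $e_{ij}$ and each commutator $[N, e_{ij}]$ is a finite-rank operator, so $\pi_N$ applied to these chains lands in $\mathcal{S}$, and the order-$l$ truncation arguments take place inside $M_l(\mathbb{C}) \subseteq \mathcal{S}$. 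Granting the two lemmas as stated, no further work is required.
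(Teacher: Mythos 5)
Your proposal is correct and follows exactly the paper's route: the paper's entire proof is ``Combine Lemma (\ref{finite support 1}) and (\ref{finite support 2})'', i.e.\ the numerator $\pi_N(\Omega^n(\mathcal{S}))=\mathcal{S}$ and denominator $\pi_N(dJ_0^{n-1}(\mathcal{S}))=\mathcal{S}$ cancel for $n\geq 2$, while for $n=0,1$ the denominator vanishes by faithfulness of the degree-zero representation. Your extra care about the non-unital convention is a reasonable cross-check but introduces nothing beyond what the two lemmas already provide.
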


\begin{proof}
Combine Lemma ($\,$\ref{finite support 1}$\,$) and ($\,$\ref{finite support 2}$\,$). 
\end{proof}

Now we are ready for the computation of $\,\Omega_{\varSigma^2D}^\bullet\left(\varSigma^2\mathcal{A}\right)\, $. Note that both $\pi_{\varSigma^2D}\left(\Omega^\bullet(\mathcal{A}\otimes \mathcal{S})\right)$ and
$\pi_{\varSigma^2D}\left(\Omega^\bullet(\mathbb{C}[z,z^{-1}])\right)$ are subspaces of $\pi_{\varSigma^2D}\left(\Omega^\bullet(\varSigma^2\mathcal{A})\right)$, since $\varSigma^2\mathcal{A}=\mathcal{A}\otimes
\mathcal{S}\bigoplus \mathbb{C}[z,z^{-1}]$ as $\mathbb{C}$-vector spaces. Furthermore, $\pi_{\varSigma^2D}\left(\Omega^\bullet(\mathbb{C}[z,z^{-1}])\right)=F^\bullet\otimes \pi_N\left(\Omega^\bullet(\mathbb{C}[z,z^{-1}])
\right)$.

\begin{lemma}\label{referring lemma 0}
$\pi\left(\Omega^1(\varSigma^2 \mathcal{A})\right) = \pi\left(\Omega^1(\mathcal{A}\otimes \mathcal{S})\right) + (F\otimes 1)\left(\mathcal{A}\otimes \mathcal{S} + \pi(\Omega^1(\mathbb{C}[z,z^{-1}])) \right)$.
\end{lemma}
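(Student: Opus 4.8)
The plan is to compute $\pi\bigl(\Omega^1(\varSigma^2\mathcal{A})\bigr)$ directly from the definition of the representation $\pi$ applied to $1$-forms $a_0\,da_1$ with $a_0,a_1\in\varSigma^2\mathcal{A}$. Since $\varSigma^2\mathcal{A}=\mathcal{A}\otimes\mathcal{S}\bigoplus\mathbb{C}[z,z^{-1}]$ as a $\mathbb{C}$-vector space, by bilinearity it suffices to evaluate $\pi\bigl((a_0\otimes T_0 + f_0)\,d(a_1\otimes T_1 + f_1)\bigr)$, i.e. $(a_0\otimes T_0 + f_0)\,[\varSigma^2 D,\, a_1\otimes T_1 + f_1]$, where $a_i\otimes T_i\in\mathcal{A}\otimes\mathcal{S}$ and $f_i\in\mathbb{C}[z,z^{-1}]$, and then take linear spans. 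Recall $\varSigma^2 D = D\otimes I + F\otimes N$, and I will use the already-recorded formulas $[\varSigma^2 D, 1\otimes\sigma'(f)] = F\otimes f'$ and the commutator expansion $[\varSigma^2 D, a\otimes T] = [D,a]\otimes T + FaF\otimes T' - $ (correction terms), more precisely $[\varSigma^2D, a\otimes T] = [D,a]\otimes T + (FaF - a)\otimes \text{(terms from }[F\otimes N, a\otimes T]\text{)}$; the clean identity I need is $[F\otimes N, a\otimes T] = Fa\otimes NT - aF\otimes TN$, which I will split as $[D,a]$-type plus $F$-twisted pieces exactly as in the proof of Proposition~\ref{justification for assumption}.

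First I would handle the four cross-terms. The term $(a_0\otimes T_0)[\varSigma^2D, a_1\otimes T_1]$ produces a contribution lying in $\pi\bigl(\Omega^1(\mathcal{A}\otimes\mathcal{S})\bigr)$ together with a piece of the form $(a_0\otimes T_0)(Fa_1F\otimes T_1' - \ldots)$; here I use Condition $(A)$ to commute $F$ past $[D,\cdot]$ and elements of $\mathcal{A}$ modulo compacts, and Condition $(B)$ to ensure $Fa_1F$ acts appropriately, so that this piece lands in $(F\otimes 1)(\mathcal{A}\otimes\mathcal{S})$ modulo $\pi\bigl(\Omega^1(\mathcal{A}\otimes\mathcal{S})\bigr)$. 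The term $(a_0\otimes T_0)[\varSigma^2D, f_1] = (a_0\otimes T_0)(F\otimes f_1') = (Fa_0\otimes T_0 f_1')$ after moving $F$ across (modulo compacts, by $(A)$), which is again absorbed into $(F\otimes 1)(\mathcal{A}\otimes\mathcal{S})$ since $T_0 f_1'\in\mathcal{S}$. The term $f_0[\varSigma^2D, a_1\otimes T_1]$ similarly decomposes into an $\mathcal{A}\otimes\mathcal{S}$-type form plus an $(F\otimes 1)(\mathcal{A}\otimes\mathcal{S})$ piece. Finally $f_0[\varSigma^2D,f_1] = f_0(F\otimes f_1') = F\otimes f_0 f_1'$ contributes exactly $(F\otimes 1)\pi\bigl(\Omega^1(\mathbb{C}[z,z^{-1}])\bigr)$, using that $\pi_N\bigl(\Omega^1(\mathbb{C}[z,z^{-1}])\bigr)$ is spanned by such $f_0 f_1'$. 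Collecting all contributions gives the inclusion ``$\subseteq$''. The reverse inclusion ``$\supseteq$'' is easy: $\pi\bigl(\Omega^1(\mathcal{A}\otimes\mathcal{S})\bigr)\subseteq\pi\bigl(\Omega^1(\varSigma^2\mathcal{A})\bigr)$ is automatic, $(F\otimes 1)\pi(\Omega^1(\mathbb{C}[z,z^{-1}]))$ is realized by $f_0\,df_1$, and $(F\otimes 1)(\mathcal{A}\otimes\mathcal{S})$ is realized by elements like $(1\otimes l^*)\,d(a\otimes u l) $-type products together with $(a_0\otimes T_0)\,d(1\otimes l)$ after using $\pi_N(\Omega^1(\mathcal{S})) = \mathcal{S}$ from Lemma~\ref{finite support 1}; concretely one writes each $a\otimes S\in\mathcal{A}\otimes\mathcal{S}$ as $\pi$ of a $1$-form involving a $z$-generator so that a factor $F$ is picked up.

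The main obstacle I anticipate is bookkeeping the $F$-twisting: every time $F\otimes N$ meets an $a\otimes T$, one gets not $[D,a]$ but also a leftover $(FaF-a)$ or $[F,a]$ term, and one must repeatedly invoke Condition $(A)$ to discard commutators $[F,a]$, $[F,[D,a]]$ modulo $\mathcal{K}(\widetilde{\mathcal{H}})=\mathcal{K}(\mathcal{H})\otimes\mathcal{K}(\ell^2(\mathbb{N})) + \ldots$ — care is needed because $\mathcal{K}(\mathcal{H}\otimes\ell^2(\mathbb{N}))$ is not simply $\mathcal{K}(\mathcal{H})\otimes\mathcal{B}(\ell^2(\mathbb{N}))$, but since the $\mathcal{S}$-components are finite rank this causes no trouble. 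A secondary subtlety is verifying that the ``$F\otimes 1$ times $\mathcal{A}\otimes\mathcal{S}$'' piece does not already sit inside $\pi\bigl(\Omega^1(\mathcal{A}\otimes\mathcal{S})\bigr)$ in disguise; this is exactly where the hypothesis $F\mathcal{A}\cap\mathcal{A}=\{0\}$ (legitimate by Remark~\ref{essential remark}) is used to keep the two summands genuinely distinct, and it is the reason the statement is phrased as a sum rather than a direct claim of equality with one of the pieces.
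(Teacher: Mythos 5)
Your proposal is correct and follows essentially the same route as the paper: split $\varSigma^2\mathcal{A}=\mathcal{A}\otimes\mathcal{S}\oplus\mathbb{C}[z,z^{-1}]$, expand the four mixed terms of $(a_0\otimes T_0+f_0)[\varSigma^2 D,a_1\otimes T_1+f_1]$ for the inclusion ``$\subseteq$'', and realize $(F\otimes 1)(\mathcal{A}\otimes\mathcal{S})$ and $(F\otimes 1)\pi(\Omega^1(\mathbb{C}[z,z^{-1}]))$ by one-forms whose $d$-slot carries the shift generator $1\otimes l^{*}$ (resp.\ $f_0\,df_1$) for ``$\supseteq$''. The only cosmetic difference is that the paper absorbs the cross term $f_0[\varSigma^2D,a_1\otimes T_1]$ via the Leibniz regrouping $[\varSigma^2 D,a_1\otimes f_0T_1]-Fa_1\otimes f_0'T_1$ instead of your direct commutator expansion (and it does not need Conditions (A)/(B) here), but both versions land in the same summands.
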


\begin{proof}
Note that $\pi\left(\Omega^1(\mathcal{A}\otimes \mathcal{S})\right) \subseteq \pi\left(\Omega^1(\varSigma^2 \mathcal{A})\right)$. Now for any element $(F\otimes 1)(a\otimes T + f_0f_1^\prime)$ of
$(F\otimes 1)\left(\mathcal{A}\otimes \mathcal{S} + \pi(\Omega^1(\mathbb{C}[z,z^{-1}])) \right)$ we see that,
\begin{eqnarray*}
 (F\otimes 1)(a\otimes T + f_0f_1^\prime) & = & (a\otimes T + f_0f_1^\prime)(F\otimes 1)\\
                              & = & (a\otimes T + f_0f_1^\prime)(1\otimes l)(1\otimes l^*)(F\otimes 1)\\
                              & = & (a\otimes T + f_0f_1^\prime)(1\otimes l)\left[\varSigma^2 D,1\otimes l^*\right]
\end{eqnarray*}
This is clearly in $\pi\left(\Omega^1(\varSigma^2 \mathcal{A})\right)$. To see the reverse inclusion we start with arbitrary element $\, \sum_k (a_{0k}\otimes T_{0k} + f_{0k})\left[\sum^2 D, a_{1k}\otimes T_{1k}+ f_{1k}
\right]$ of $\pi\left(\Omega^1(\varSigma^2 \mathcal{A})\right)$. Then,
\begin{eqnarray*}
&   & \sum_k \left(a_{0k}\otimes T_{0k} + f_{0k}\right)\left[\varSigma^2 D, a_{1k}\otimes T_{1k}+ f_{1k}\right]\\
&   & = \sum_k \left(a_{0k}\otimes T_{0k}\right) \left[\varSigma^2 D, a_{1k}\otimes T_{1k}\right] + \left(a_{0k}\otimes T_{0k} + f_{0k}\right) \left[\varSigma^2 D, f_{1k}\right]\\
&   & \, \, \, \, \, \, \, \, \, \, \, \, \, + \left[\varSigma^2 D,a_{1k}\otimes f_{0k}T_{1k}\right] - Fa_{1k}\otimes f_{0k}^\prime T_{1k}
\end{eqnarray*}
which is an element of $\pi\left(\Omega^1(\mathcal{A}\otimes \mathcal{S})\right) + (F\otimes 1)\left(\mathcal{A}\otimes \mathcal{S} + \pi(\Omega^1(\mathbb{C}[z,z^{-1}])) \right)$.
\end{proof}

\begin{lemma}\label{referring lemma 1}
$\pi \left(\Omega^n(\varSigma^2 \mathcal{A})\right) = \sum_{j=1}^n F^{j-1}\pi \left(\Omega^{n+1-j}(\mathcal{A}\otimes \mathcal{S})\right) + (F\otimes 1)^n\left(\mathcal{A}\otimes \mathcal{S}+\pi(\Omega^n(\mathbb{C}[z,z^{-1}])
)\right)$ for all $n\geq 1\,$.
\end{lemma}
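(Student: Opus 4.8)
The plan is to establish Lemma~\ref{referring lemma 1} by induction on $n$, using Lemma~\ref{referring lemma 0} as the base case $n=1$ and the multiplicativity of $\pi$ together with the splitting $\varSigma^2\mathcal{A}=\mathcal{A}\otimes\mathcal{S}\oplus\mathbb{C}[z,z^{-1}]$ as $\mathbb{C}$-vector spaces. The key structural fact to exploit throughout is the commutation relation $(F\otimes 1)(a\otimes T)=(a\otimes T)(F\otimes 1)$ coming from Condition $(A)$ modulo compacts (so $FaF$ and $a$ agree in $\mathcal{Q}(\mathcal{H})$), and more precisely the computations already displayed in the proof of Proposition~\ref{justification for assumption}: commuting $F\otimes 1$ past a commutator $[\varSigma^2D,\,\cdot\,]$ produces terms of the form ``$\pi$ of something of lower order'' plus a compact correction. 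Since the sign of $\varSigma^2D$ is $F\otimes 1$, a product $\prod_{i=1}^n[\varSigma^2D,x_i]$ can be analyzed by repeatedly substituting $[\varSigma^2D,1\otimes l^*]=(1\otimes l^*)(F\otimes 1)$ and $(F\otimes 1)(1\otimes l)=1$ on $\ell^2(\mathbb{N})$ modulo the rank-one projection $u$.

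First I would write an arbitrary generator of $\pi(\Omega^n(\varSigma^2\mathcal{A}))$ as $x_0[\varSigma^2D,x_1]\cdots[\varSigma^2D,x_n]$ with each $x_i\in\mathcal{A}\otimes\mathcal{S}$ or $x_i\in\mathbb{C}[z,z^{-1}]$, and split into cases according to how many consecutive ``$\mathbb{C}[z,z^{-1}]$-type'' factors appear. The calculation $[\varSigma^2D,1\otimes\sigma'(f)]=F\otimes f'$ recorded before the Conditions shows that each $\mathbb{C}[z,z^{-1}]$-factor contributes one power of $F\otimes 1$; and whenever an $\mathcal{A}\otimes\mathcal{S}$-factor is immediately followed by an $F\otimes 1$ coming from such a term, one commutes the $F\otimes 1$ leftward past the preceding commutators at the cost (by the Proposition~\ref{justification for assumption} computation) of lower-order terms. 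Iterating, every generator is rewritten as $F^{j-1}$ times a product of $n+1-j$ commutators $[\varSigma^2D,\mathcal{A}\otimes\mathcal{S}]$ applied to an element of $\mathcal{A}\otimes\mathcal{S}$, for some $1\le j\le n$, plus the purely commutative contribution $(F\otimes 1)^n\otimes\pi_N(\Omega^n(\mathbb{C}[z,z^{-1}]))$, plus the ``leftover'' term $(F\otimes 1)^n(\mathcal{A}\otimes\mathcal{S})$ arising exactly as in Lemma~\ref{referring lemma 0} (the term $-Fa_{1k}\otimes f_{0k}'T_{1k}$ there). This gives the inclusion $\subseteq$.

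For the reverse inclusion $\supseteq$, I would show each summand on the right-hand side is realized. That $F^{j-1}\pi(\Omega^{n+1-j}(\mathcal{A}\otimes\mathcal{S}))\subseteq\pi(\Omega^n(\varSigma^2\mathcal{A}))$ follows by inserting $j-1$ extra commutators built from $1\otimes l,\,1\otimes l^*$: namely, using $(F\otimes 1)=(1\otimes l)[\varSigma^2D,1\otimes l^*]$ repeatedly (as in the first display of the proof of Lemma~\ref{referring lemma 0}) one converts each power of $F$ into a genuine one-form factor on $\varSigma^2\mathcal{A}$, noting that multiplication by $1\otimes l$ keeps us inside $\mathcal{A}\otimes\mathcal{S}$ after absorbing the rank-one defect. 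The term $(F\otimes 1)^n\pi(\Omega^n(\mathbb{C}[z,z^{-1}]))$ is immediate from $[\varSigma^2D,1\otimes\sigma'(f)]=F\otimes f'$ and $(F\otimes 1)^n=F^n\otimes 1$ commuting with $1\otimes f$. The term $(F\otimes 1)^n(\mathcal{A}\otimes\mathcal{S})$ is handled as in the $n=1$ case, writing $a\otimes T=(a\otimes T)(1\otimes l^n)(1\otimes l^{*n})$ modulo compacts and expanding $1\otimes l^{*n}$ via $n$ commutators, then moving the factors of $F$ to the left.

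The main obstacle I expect is bookkeeping the reduction in the $\subseteq$ direction: one must check that commuting $F\otimes 1$ leftward past a string of commutators $[\varSigma^2D,\,\cdot\,]$ really only generates terms that already lie in $\sum_{j}F^{j-1}\pi(\Omega^{n+1-j}(\mathcal{A}\otimes\mathcal{S}))$, i.e.\ that the correction terms $[F,a]F\otimes[N,T]$ from Proposition~\ref{justification for assumption} do not escape the claimed span and that mixed products (alternating $\mathcal{A}\otimes\mathcal{S}$ and $\mathbb{C}[z,z^{-1}]$ factors) collapse correctly after using $(F\otimes 1)(1\otimes l)=1$ modulo $u\in\mathcal{S}$. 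Organizing this as a clean induction — peeling off the rightmost commutator, applying the $n=1$ identity of Lemma~\ref{referring lemma 0}, and invoking the inductive hypothesis for the remaining $n-1$ factors — should keep the combinatorics manageable, and the fact that everything is taken modulo $\mathcal{K}(\mathcal{H})$ in $\mathcal{Q}(\mathcal{H})$ is what makes the $FaF\equiv a$ and $(F\otimes 1)(1\otimes l)\equiv 1$ substitutions legitimate.
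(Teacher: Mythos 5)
Your proposal is correct and takes essentially the same route as the paper: induction on $n$ with Lemma \ref{referring lemma 0} as the base case, peeling off the rightmost commutator (using $[\varSigma^2 D,1\otimes\sigma^\prime(f)]=F\otimes f^\prime$ and commuting $F\otimes 1$ modulo compacts), and obtaining the reverse inclusion by the same trick of writing $F\otimes 1=(1\otimes l)[\varSigma^2 D,1\otimes l^*]$ as in Lemma \ref{referring lemma 0}. Only a cosmetic slip: the identity you want is $(1\otimes l)[\varSigma^2 D,1\otimes l^*]=F\otimes 1$ rather than ``$(F\otimes 1)(1\otimes l)=1$'', which does not affect the argument.
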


\begin{proof}
We prove by induction. Suppose the statement is true for $n=k$. Any element of $\pi \left(\Omega^{k+1}(\varSigma^2 \mathcal{A})\right)$ can be written as $\,\omega [\varSigma^2 D,a\otimes T + f]$, where
$\omega$ is in $\pi \left(\Omega^k(\varSigma^2 \mathcal{A})\right)$. By assumtion $\,\omega = \sum_{i=1}^{k+1}\omega_i$ where $\omega_i \in F^{i-1}\pi\left(\Omega^{k+1-i}(\mathcal{A}\otimes \mathcal{S})\right)$ 
for $1\leqslant i \leqslant k\,$ and $\,\omega_{k+1} \in (F\otimes1)^k \left(\mathcal{A}\otimes \mathcal{S} + \pi(\Omega^k(\mathbb{C}[z,z^{-1}]))\right)$. Hence,
\begin{center}
$\omega [\varSigma^2 D,a\otimes T + f] = \sum_{i=1}^{k+1}\omega_i[\varSigma^2 D,a\otimes T] + \sum_{i=1}^{k+1}\omega_i(F\otimes f^\prime)$.
\end{center}
This is an element of $\, \sum_{j=1}^{k+1} F^{j-1}\pi \left(\Omega^{k+2-j}(\mathcal{A}\otimes \mathcal{S})\right) + (F\otimes 1)^{k+1}\left(\mathcal{A}\otimes \mathcal{S}+\pi\left(\Omega^{k+1}(\mathbb{C}[z,z^{-1}])\right)
\right)$. To get the reverse inclusion one can use the same trick used in Lemma ($\, $\ref{referring lemma 0}$\, $).
\end{proof}

\begin{lemma}\label{referring lemma 2}
$\pi \left(\Omega^1(\mathcal{A}\otimes \mathcal{S})\right)\, \, \bigcap\, \, (F\otimes 1)\left(\mathcal{A}\otimes \mathcal{S}+ \pi(\Omega^1(\mathbb{C}[z,z^{-1}]))\right) = (F\otimes 1)(\mathcal{A}\otimes \mathcal{S})$.
\end{lemma}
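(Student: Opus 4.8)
The plan is to establish the two inclusions separately. The inclusion $(F\otimes 1)(\mathcal{A}\otimes \mathcal{S}) \subseteq \pi(\Omega^1(\mathcal{A}\otimes \mathcal{S}))$ should follow from Lemma~(\ref{finite support 1}): since $\pi_N(\Omega^1(\mathcal{S})) = \mathcal{S}$, given $a\otimes T$ we can write $T = \sum_k S_{0k}[N, S_{1k}]$ with $S_{ik}\in\mathcal{S}$, and then $(F\otimes 1)(a\otimes T)$ should be expressible using commutators $[\varSigma^2 D, a'\otimes S']$. The key computational point is the identity $(F\otimes 1)(a\otimes T) = (a\otimes T)(F\otimes 1)$ together with $[\varSigma^2 D, 1\otimes S'] = F\otimes[N,S']$ for $S'\in\mathcal{S}$, so that $(F\otimes 1)(a\otimes \sum_k S_{0k}[N,S_{1k}]) = \sum_k (a\otimes S_{0k})(F\otimes [N,S_{1k}]) = \sum_k (a\otimes S_{0k})[\varSigma^2 D, 1\otimes S_{1k}]$, which lies in $\pi(\Omega^1(\mathcal{A}\otimes\mathcal{S}))$ since $a\otimes S_{0k}$ and $1\otimes S_{1k}$ are both in $\mathcal{A}\otimes\mathcal{S}$ (after adjoining a unit if needed). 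This also shows $(F\otimes 1)(\mathcal{A}\otimes\mathcal{S})$ sits inside the right-hand side of the claimed identity, so one containment of the final equality is immediate.

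For the reverse inclusion, I would take an arbitrary element $\omega$ in the left intersection, written as $\omega = \sum_k (a_{0k}\otimes T_{0k})[\varSigma^2 D, a_{1k}\otimes T_{1k}]$ on the $\mathcal{A}\otimes\mathcal{S}$ side, which must simultaneously equal $(F\otimes 1)(b\otimes S + f_0 f_1')$ on the other side. Expanding $[\varSigma^2 D, a_{1k}\otimes T_{1k}] = [D,a_{1k}]\otimes T_{1k} + Fa_{1k}\otimes[N, T_{1k}]$ (using $\varSigma^2 D = D\otimes I + F\otimes N$), one sees that $\omega$ decomposes into a part of the form $(\text{something in } \mathcal{A})\otimes\mathcal{S}$ (the $[D,a_{1k}]\otimes T_{1k}$ terms, using Condition $(B)$ to keep $[D,a_{1k}]$ essentially in $\mathcal{A}$ tensored with endomorphisms) and a part of the form $(F\,\cdot\,)\otimes\mathcal{S}$ (the $Fa_{1k}\otimes[N,T_{1k}]$ terms). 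The requirement that this equal $(F\otimes 1)(b\otimes S + f_0f_1')$, which lives purely in the ``$F$-component'', forces the $\mathcal{A}\otimes\mathcal{S}$-component to vanish; here the hypothesis $F\mathcal{A}\cap\mathcal{A} = \{0\}$ from Remark~(\ref{essential remark}) is exactly what lets one conclude that the $\mathcal{A}$-part and the $F\mathcal{A}$-part of an element of $\mathcal{A}\otimes\mathcal{S}$-plus-$F\mathcal{A}\otimes\mathcal{S}$ are independent, so that $\omega$ lands in $(F\otimes 1)(\mathcal{A}\otimes\mathcal{S})$. One also has to check that the surviving $F$-component cannot pick up a genuine $\pi(\Omega^1(\mathbb{C}[z,z^{-1}]))$ contribution, i.e. that $f_0 f_1'$ effectively is absorbed into $\mathcal{A}\otimes\mathcal{S}$; this uses that $[N,f]$ for $f\in\mathbb{C}[z,z^{-1}]$ need not be finitely supported, so a nonzero $f_0f_1'$ term could not be matched by the finitely supported $\mathcal{A}\otimes\mathcal{S}$ side unless it vanishes.

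The main obstacle I anticipate is bookkeeping the decomposition of $\varSigma^2\mathcal{H}$-operators into their ``$\mathcal{A}$-type'' and ``$F\mathcal{A}$-type'' components cleanly enough that the disjointness $F\mathcal{A}\cap\mathcal{A}=\{0\}$ can be invoked rigorously; Condition $(B)$ is what guarantees $[D,a]$ stays within a controlled $\mathcal{A}$-bimodule of endomorphisms so that ``$[D,a]\otimes T$ is of $\mathcal{A}$-type'' is a meaningful statement, and one must be careful that tensoring with $\mathcal{S}$ does not destroy this separation. Modulo this structural argument, the remaining verifications are routine manipulations with $\varSigma^2 D = D\otimes I + F\otimes N$ and the commutation relation $F(1\otimes l) = (1\otimes l)F$ already exploited in Lemma~(\ref{referring lemma 0}).
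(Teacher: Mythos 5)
Your proposal is correct and follows essentially the same route as the paper: the containment $(F\otimes 1)(\mathcal{A}\otimes\mathcal{S})\subseteq\pi\left(\Omega^1(\mathcal{A}\otimes\mathcal{S})\right)$ is obtained exactly as in the paper by writing $T=\sum_k S_{0k}[N,S_{1k}]$ via Lemma (\ref{finite support 1}) and using $(a\otimes S_{0k})[\varSigma^2 D,1\otimes S_{1k}]=Fa\otimes S_{0k}[N,S_{1k}]$ (modulo compacts), while the reverse containment rests on the same observation the paper makes, namely that a nonzero $f\in\mathbb{C}[z,z^{-1}]$ cannot act as a finitely supported operator on $\ell^2(\mathbb{N})$, so the $\pi(\Omega^1(\mathbb{C}[z,z^{-1}]))$ contribution is forced to vanish against the $\mathcal{S}$-leg of $\pi\left(\Omega^1(\mathcal{A}\otimes\mathcal{S})\right)$. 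Your intermediate appeal to $F\mathcal{A}\cap\mathcal{A}=\{0\}$ to kill the ``$\mathcal{A}$-type'' component is harmless but unnecessary (the paper does not use it here), since once an element of the intersection is written as $(F\otimes 1)(b\otimes S)+(F\otimes 1)(f_0f_1')$ only the second summand needs to be eliminated.
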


\begin{proof}
Choose any arbitrary element $\sum_k \left(a_{0k}\otimes T_{0k}\right) \left[\varSigma^2 D,a_{1k}\otimes T_{1k} \right]$ of $\pi \left(\Omega^1(\mathcal{A}\otimes \mathcal{S})\right)$. 
In terms of elementary matrices $((e_{ij}))$ we can rewrite this element as following~,
\begin{eqnarray*}
\sum_k \left(a_{0k}\otimes T_{0k}\right) \left[\varSigma^2 D,a_{1k}\otimes T_{1k} \right] & = & \sum_k \left(\sum_{i,j} a_{0kij}\otimes e_{ij}\right)\left[\varSigma^2 D, \sum_{p,q} a_{1kpq}\otimes e_{pq} \right]\\
                                                                                          & = &  \sum_{k,i,j,q} a_{0kij}\left([D,a_{1kjq}] + F(j-q)a_{1kjq}\right) \otimes e_{iq}
\end{eqnarray*}
Now any element of $(F\otimes 1)(\mathcal{A}\otimes S) + (F\otimes 1)\pi\left(\Omega^1(\mathbb{C}[z,z^{-1}])\right)$ looks like $\sum_{k^\prime} Fa_{k^\prime}\otimes T_{k^\prime} + F\otimes f$ for some $f \in\mathbb{C}[z,z^{-1}]$.
Equality of these two elements shows that $f$ has to be a compact operator on $\ell^2(\mathbb {N})$ (Take any linear functional $\int$ on $B(\mathcal{H})$ and hit both elements by 
$\int \otimes Id$). Hence, if intersection is nontrivial then it must be contained in the ideal $(F\otimes 1)(\mathcal{A}\otimes \mathcal{S})$. We now show that $(F\otimes 1)(\mathcal{A}\otimes \mathcal{S})$
is contained in the intersection. Choose any arbitrary element $\sum_{k^\prime} Fa_{k^\prime}\otimes T_{k^\prime}$. Consider the following equation~,
\begin{eqnarray*}
\sum_{k^\prime} Fa_{k^\prime}\otimes T_{k^\prime} & = & \sum_k \left(a_{0k}\otimes T_{0k}\right) \left[\varSigma^2 D,a_{1k}\otimes T_{1k} \right].
\end{eqnarray*}
Choose $a_{1k}=1$ for each $k$. Then this equation reduces to,
\begin{eqnarray*}
\sum_{k^\prime} Fa_{k^\prime}\otimes T_{k^\prime} & = & \sum_k Fa_{0k}\otimes T_{0k}[N,T_{1k}].
\end{eqnarray*}
Using Lemma ($\,$\ref{finite support 1}$\,$) we can write each $T_{k^\prime}$ as $\sum_{m< \infty} T_{0m}^{(k^\prime)}[N,T_{1m}^{(k^\prime)}]$.
Hence, this equation has nontrivial solution, which shows that $(F\otimes 1)(\mathcal{A}\otimes \mathcal{S})  \subseteq \pi \left(\Omega^1(\mathcal{A}\otimes \mathcal{S})\right)$.
\end{proof}

\begin{lemma}\label{referring lemma 3}
$(F\otimes 1)\pi\left(\Omega^n(\mathcal{A}\otimes \mathcal{S})\right) \subseteq \pi\left(\Omega^{n+1}(\mathcal{A}\otimes \mathcal{S})\right)\,$ for all $\,n \geq 1$.
\end{lemma}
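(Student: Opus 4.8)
The plan is to reduce the assertion, for every $n\geq 1$, to the single fact $(F\otimes 1)(\mathcal{A}\otimes\mathcal{S})\subseteq\pi\left(\Omega^1(\mathcal{A}\otimes\mathcal{S})\right)$, which is already established in the last part of the proof of Lemma $(\ref{referring lemma 2})$. The observation that makes this reduction work is that left multiplication by $F\otimes 1$ interacts trivially with the differential part of a form over $\mathcal{A}\otimes\mathcal{S}$: a general element of $\pi\left(\Omega^n(\mathcal{A}\otimes\mathcal{S})\right)$ is a finite sum of operators
$$\omega=(a_0\otimes T_0)\,[\varSigma^2 D,a_1\otimes T_1]\cdots[\varSigma^2 D,a_n\otimes T_n],\qquad a_i\otimes T_i\in\mathcal{A}\otimes\mathcal{S},$$
and by associativity $(F\otimes 1)\omega$ is the same product with its degree--zero factor replaced by $(F\otimes 1)(a_0\otimes T_0)=Fa_0\otimes T_0$; in particular nothing has to be commuted past $\varSigma^2 D$ or past any $a_i$.

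I would then use the inclusion above to rewrite the leading factor as a one--form, $Fa_0\otimes T_0=\sum_m (b_{0m}\otimes S_{0m})[\varSigma^2 D,b_{1m}\otimes S_{1m}]$ (modulo a compact operator), with every $b_{im}\otimes S_{im}\in\mathcal{A}\otimes\mathcal{S}$, and substitute it back. Concatenating the commutators gives
$$(F\otimes 1)\omega=\sum_m(b_{0m}\otimes S_{0m})[\varSigma^2 D,b_{1m}\otimes S_{1m}][\varSigma^2 D,a_1\otimes T_1]\cdots[\varSigma^2 D,a_n\otimes T_n]$$
modulo a compact operator, the error term staying compact because it is multiplied on the right by the bounded operator $[\varSigma^2 D,a_1\otimes T_1]\cdots[\varSigma^2 D,a_n\otimes T_n]$. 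The right hand side lies in $\pi\left(\Omega^{n+1}(\mathcal{A}\otimes\mathcal{S})\right)$, and summing over the finitely many summands of an arbitrary element of $\pi\left(\Omega^n(\mathcal{A}\otimes\mathcal{S})\right)$ finishes the argument.

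There is no real obstacle once Lemma $(\ref{referring lemma 2})$ is in hand; the two points requiring care are that multiplying by $F\otimes 1$ genuinely raises the form degree by one -- which is exactly the content of the first paragraph -- and the routine bookkeeping of the compact remainders coming from the non--unital convention for $\pi$. If one prefers a self--contained and compact--correction--free derivation of the needed inclusion, one can argue on elementary matrices: for $r\neq q$ one has the exact identity $(1\otimes e_{pr})[\varSigma^2 D,1\otimes e_{rq}]=(r-q)(F\otimes e_{pq})$, and, more generally, for any $a_0\in\mathcal{A}$ and $r\neq q$,
$$\frac{1}{r-q}\left((1\otimes e_{pr})[\varSigma^2 D,a_0\otimes e_{rq}]-(1\otimes e_{pq})[\varSigma^2 D,a_0\otimes e_{qq}]\right)=Fa_0\otimes e_{pq},$$
so that $(F\otimes 1)(\mathcal{A}\otimes\mathcal{S})\subseteq\pi\left(\Omega^1(\mathcal{A}\otimes\mathcal{S})\right)$ follows by linearity, after which one proceeds exactly as above.
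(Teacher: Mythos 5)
Your proof is correct and follows essentially the same route as the paper: an element of $\pi\left(\Omega^n(\mathcal{A}\otimes\mathcal{S})\right)$ is a sum of products $(a_0\otimes T_0)\prod_{i}[\varSigma^2 D,a_i\otimes T_i]$, the factor $F\otimes 1$ is absorbed into the degree-zero coefficient, and $Fa_0\otimes T_0$ is rewritten as a one-form via the inclusion $(F\otimes 1)(\mathcal{A}\otimes\mathcal{S})\subseteq\pi\left(\Omega^1(\mathcal{A}\otimes\mathcal{S})\right)$ established in Lemma (\,\ref{referring lemma 2}\,), which is exactly the paper's two-line argument using $\Omega^n=\Omega^1\otimes_{\mathcal{B}}\cdots\otimes_{\mathcal{B}}\Omega^1$. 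Your supplementary elementary-matrix identity giving $Fa_0\otimes e_{pq}$ exactly (no compact correction) is a valid, slightly cleaner derivation of that key inclusion, but it does not change the overall approach.
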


\begin{proof}
This follows from the fact that for algebra $\mathcal{B}$ $$\Omega^n(\mathcal{B})=\underbrace{\Omega^1(\mathcal{B})\otimes_\mathcal{B}\ldots\otimes_\mathcal{B}\Omega^1(\mathcal{B})}_{n\,\, times}\,.$$ Now use Lemma
($\,$\ref{referring lemma 2}$\,$) which says that $(F\otimes 1)(\mathcal{A}\otimes \mathcal{S})\subseteq\pi \left(\Omega^1(\mathcal{A}\otimes \mathcal{S})\right)$.
\end{proof}

\begin{proposition}\label{first main proposition}
$\pi \left(\Omega^n(\varSigma^2 \mathcal{A})\right) = \pi \left(\Omega^n(\mathcal{A}\otimes \mathcal{S})\right)\, \bigoplus\, \, \pi\left(\Omega^n(\mathbb{C}[z,z^{-1}])\right)$ for all $n\geq 0\,$. 
\end{proposition}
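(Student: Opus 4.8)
The plan is to handle $n=0$ separately — there it is immediate, since $\varSigma^2\mathcal{A}=(\mathcal{A}\otimes\mathcal{S})\oplus\mathbb{C}[z,z^{-1}]$ as vector spaces and $\pi$ is the identity in degree $0$ — and then fix $n\geq 1$. For $n\geq 1$ the inclusion
\[
\pi\big(\Omega^n(\mathcal{A}\otimes\mathcal{S})\big)+\pi\big(\Omega^n(\mathbb{C}[z,z^{-1}])\big)\ \subseteq\ \pi\big(\Omega^n(\varSigma^2\mathcal{A})\big)
\]
is immediate, because $\mathcal{A}\otimes\mathcal{S}$ and $\mathbb{C}[z,z^{-1}]$ are subalgebras of $\varSigma^2\mathcal{A}$. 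So two things remain: that this sum exhausts $\pi(\Omega^n(\varSigma^2\mathcal{A}))$, and that it is direct. The first I would read off from Lemma \ref{referring lemma 1}; the second is, I expect, where the real work lies.

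For the exhaustion, I would start from the formula of Lemma \ref{referring lemma 1} and show that every summand on its right-hand side is already contained in $\pi(\Omega^n(\mathcal{A}\otimes\mathcal{S}))+\pi(\Omega^n(\mathbb{C}[z,z^{-1}]))$. The two inputs are Lemma \ref{referring lemma 2}, giving $(F\otimes 1)(\mathcal{A}\otimes\mathcal{S})\subseteq\pi(\Omega^1(\mathcal{A}\otimes\mathcal{S}))$, and Lemma \ref{referring lemma 3}, which raises degree: $(F\otimes 1)\pi(\Omega^k(\mathcal{A}\otimes\mathcal{S}))\subseteq\pi(\Omega^{k+1}(\mathcal{A}\otimes\mathcal{S}))$. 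Since $F^2=I$, applying $F\otimes 1$ to a subspace twice returns it while one application raises the form-degree by one; chaining these (with a short parity count, using also $\pi(\Omega^k(\mathcal{A}\otimes\mathcal{S}))\subseteq\pi(\Omega^{k+2}(\mathcal{A}\otimes\mathcal{S}))$, and Lemma \ref{finite support 1} for the even-degree piece) one obtains $F^{j-1}\pi(\Omega^{n+1-j}(\mathcal{A}\otimes\mathcal{S}))\subseteq\pi(\Omega^n(\mathcal{A}\otimes\mathcal{S}))$ for $1\le j\le n$ and $(F\otimes 1)^n(\mathcal{A}\otimes\mathcal{S})\subseteq\pi(\Omega^n(\mathcal{A}\otimes\mathcal{S}))$. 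The last summand $(F\otimes 1)^n\pi(\Omega^n(\mathbb{C}[z,z^{-1}]))$ is dealt with using $\pi(\Omega^n(\mathbb{C}[z,z^{-1}]))=F^n\otimes\pi_N(\Omega^n(\mathbb{C}[z,z^{-1}]))$ together with $F^2=I$ once more. Collecting everything yields $\pi(\Omega^n(\varSigma^2\mathcal{A}))=\pi(\Omega^n(\mathcal{A}\otimes\mathcal{S}))+\pi(\Omega^n(\mathbb{C}[z,z^{-1}]))$.

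For directness I would argue by a ``support versus symbol'' dichotomy, which is in effect the degree-$n$ strengthening of Lemma \ref{referring lemma 2}. Since $\varSigma^2 D=D\otimes I+F\otimes N$ and every $T\in\mathcal{S}$ has finite support, a direct expansion gives
\[
[\varSigma^2 D,\,a\otimes T]=[D,a]\otimes T+Fa\otimes[N,T]+[F,a]\otimes TN\ \in\ \mathcal{B}(\mathcal{H})\otimes\mathcal{S},
\]
and as $\mathcal{B}(\mathcal{H})\otimes\mathcal{S}$ is a subalgebra of $\mathcal{B}(\mathcal{H})\otimes\mathcal{B}(\ell^2(\mathbb{N}))$, every element of $\pi(\Omega^n(\mathcal{A}\otimes\mathcal{S}))$ lies in $\mathcal{B}(\mathcal{H})\otimes\mathcal{S}$ and is therefore annihilated by the symbol map $\mathrm{id}\otimes\sigma$ on the second leg. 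On the other hand $\pi(\Omega^n(\mathbb{C}[z,z^{-1}]))=F^n\otimes\pi_N(\Omega^n(\mathbb{C}[z,z^{-1}]))$, so an $\omega$ in the intersection must have the form $F^n\otimes g$ with $\sigma(g)=0$; hitting $\omega$ with $\int\otimes\mathrm{id}$ for suitable functionals $\int$ on $\mathcal{B}(\mathcal{H})$ (the device used in the proof of Lemma \ref{referring lemma 2}) further forces $g$ itself to have finite support. It then remains to show, from a direct analysis of the operators occurring in $\pi_N(\Omega^n(\mathbb{C}[z,z^{-1}]))$ — the Connes-type calculus attached to $(\mathbb{C}[z,z^{-1}],\ell^2(\mathbb{N}),N)$, for which Proposition \ref{complex for finite support} is the analogue with $\mathcal{S}$ in place of $\mathbb{C}[z,z^{-1}]$ — that such a $g$ must vanish in $\mathcal{Q}(\varSigma^2\mathcal{H})$. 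Pinning down precisely which operators live in $\pi_N(\Omega^n(\mathbb{C}[z,z^{-1}]))$ and verifying that the overlap with the ``finite support'' part $\pi(\Omega^n(\mathcal{A}\otimes\mathcal{S}))$ genuinely collapses — rather than leaving a residual finite-support remainder — is the step I expect to be the main obstacle; once it is secured, it combines with the exhaustion above to give Proposition \ref{first main proposition}.
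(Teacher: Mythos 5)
Your route is the same as the paper's: its proof of Proposition \ref{first main proposition} is literally ``combine Lemmas \ref{referring lemma 1}, \ref{referring lemma 2} and \ref{referring lemma 3}'', i.e. exactly the absorption-by-parity argument you give for exhaustion (using $F^2=I$, Lemma \ref{referring lemma 3}, and the inclusion $(F\otimes 1)(\mathcal{A}\otimes\mathcal{S})\subseteq\pi(\Omega^1(\mathcal{A}\otimes\mathcal{S}))$ from Lemma \ref{referring lemma 2}) together with the slice-functional trick of Lemma \ref{referring lemma 2} for directness. The one step you flag as the main obstacle is not one: by Lemma \ref{forms for circle 1} every element of $\pi_N\left(\Omega^n(\mathbb{C}[z,z^{-1}])\right)$ is the class of $\sigma^\prime(h)$ for a Laurent polynomial $h$, and a nonzero $\sigma^\prime(h)$ never agrees with a finitely supported matrix modulo compacts, so your element $F^n\otimes g$ in the intersection must vanish, exactly as in the degree-one case handled in Lemma \ref{referring lemma 2}.
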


\begin{proof}
Combine Lemmas ($\,$\ref{referring lemma 1}$\,$), ($\,$\ref{referring lemma 2}$\,$) and ($\,$\ref{referring lemma 3}$\,$).
\end{proof}

Recall that (the isomorphism in ($\,$\ref{main iso}$\,$)),  $\Omega_{\varSigma^2 D}^n\left(\varSigma^2 \mathcal{A}\right)\cong\pi\left(\Omega^n(\varSigma^2\mathcal{A})\right)/\pi\left(dJ_0^{n-1}(\varSigma^2\mathcal{A})\right)$. 
Hence our next target is to identify the quotient $\,\pi\left(dJ_0^{n-1}(\varSigma^2 \mathcal{A})\right)$.

\begin{lemma}
$\pi \left(dJ_0^1(\varSigma^2 \mathcal{A})\right) = \pi \left(dJ_0^1(\mathcal{A}\otimes \mathcal{S})\right) \bigoplus \, \pi \left(dJ_0^1(\mathbb{C}[z,z^{-1}])\right)$.
\end{lemma}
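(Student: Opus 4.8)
The plan is to mimic at the level of $dJ_0^1$ the direct-sum decomposition already obtained for $\pi(\Omega^n(\varSigma^2\mathcal{A}))$ in Proposition \ref{first main proposition}. Since $\varSigma^2\mathcal{A}=\mathcal{A}\otimes\mathcal{S}\oplus\mathbb{C}[z,z^{-1}]$ as vector spaces, an element $\omega\in J_0^1(\varSigma^2\mathcal{A})$ is a finite sum $\sum_k(a_{0k}\otimes T_{0k}+f_{0k})\,d(a_{1k}\otimes T_{1k}+f_{1k})$ with $\pi(\omega)=0$. Expanding the product and using $[\varSigma^2 D,1\otimes\sigma'(f)]=F\otimes f'$ together with the identity $(a\otimes T)\,d(1\otimes\sigma'(f)) = (a\otimes T)(F\otimes f')$ and its mirror, I would split $\omega$ into a piece $\omega_{\mathcal{A}\mathcal S}$ built from $\mathcal{A}\otimes\mathcal{S}$-differentials and a piece $\omega_{S^1}$ built from $\mathbb{C}[z,z^{-1}]$-differentials, modulo the cross terms of the form $(a\otimes T)(F\otimes f')$ and $(F\otimes f')(a\otimes T)$, i.e. modulo $(F\otimes 1)(\mathcal{A}\otimes\mathcal{S})$. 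By Lemma \ref{referring lemma 2} these cross terms already lie inside $\pi(\Omega^1(\mathcal{A}\otimes\mathcal S))$, and in fact — rewriting $F\otimes f'$ as $(1\otimes\sigma'(f)) (1\otimes l)[\varSigma^2 D,1\otimes l^*]$-type expressions and using Lemma \ref{finite support 1} to realise compact operators as $N$-commutator products — one checks the cross terms are actually in $dJ_0^1(\mathcal{A}\otimes\mathcal S)$, not merely in $\Omega^1(\mathcal{A}\otimes\mathcal S)$. This is the place where Condition $(A)$, via $F\mathcal{A}\cap\mathcal{A}=\{0\}$, is used to prevent a leftover $F\otimes f$ term from surviving.

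The two inclusions are then handled as follows. For ``$\supseteq$'': $dJ_0^1(\mathcal{A}\otimes\mathcal{S})\subseteq dJ_0^1(\varSigma^2\mathcal{A})$ because $\mathcal{A}\otimes\mathcal{S}$ is a subalgebra and $\pi$ restricts compatibly, so $J_0^1(\mathcal{A}\otimes\mathcal{S})\subseteq J_0^1(\varSigma^2\mathcal{A})$; similarly for $\mathbb{C}[z,z^{-1}]$, using $\pi_{\varSigma^2 D}|_{\Omega^\bullet(\mathbb{C}[z,z^{-1}])}=F^\bullet\otimes\pi_N$, so an element of $J_0^1(\mathbb{C}[z,z^{-1}])$ for $\pi_N$ gives one for $\varSigma^2 D$. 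For ``$\subseteq$'': starting from a general $\omega\in J_0^1(\varSigma^2\mathcal{A})$, perform the splitting above; the vanishing $\pi(\omega)=0$ decomposes, by Proposition \ref{first main proposition}'s direct-sum structure, into $\pi(\omega_{\mathcal{A}\mathcal S})=0$ and $\pi(\omega_{S^1})=0$ separately (after absorbing the cross terms), so $\omega_{\mathcal{A}\mathcal S}\in J_0^1(\mathcal{A}\otimes\mathcal S)$ and $\omega_{S^1}\in J_0^1(\mathbb{C}[z,z^{-1}])$, whence $d\omega = d\omega_{\mathcal{A}\mathcal S}+d\omega_{S^1}$ lands in the claimed sum. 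Directness of the sum follows because $\pi(dJ_0^1(\mathcal{A}\otimes\mathcal S))\subseteq\pi(\Omega^1(\mathcal{A}\otimes\mathcal S))$ (applying $d$ keeps one inside $\Omega^\bullet(\mathcal{A}\otimes\mathcal S)$ by the remark before Lemma \ref{referring lemma 0}, really using Lemma \ref{referring lemma 3}) while $\pi(dJ_0^1(\mathbb{C}[z,z^{-1}]))\subseteq F\otimes\pi_N(\Omega^1(\mathbb{C}[z,z^{-1}]))$, and these two spaces meet only in $0$ by the same linear-functional argument as in Lemma \ref{referring lemma 2}.

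The main obstacle I anticipate is the bookkeeping in the splitting step: $d$ of a product is not simply the product of the $d$'s, so when I expand $\omega=\sum_k(\alpha_{0k})d(\alpha_{1k})$ with $\alpha_{ik}$ living in the direct sum, the Leibniz-type cross terms $(a\otimes T)d(f)$ and $f\,d(a\otimes T)$ must be carefully rewritten as genuine elements of $dJ_0^1(\mathcal{A}\otimes\mathcal S)$ — this is exactly the ``same trick'' flagged at the end of Lemma \ref{referring lemma 1}, namely inserting $1 = (1\otimes l)(1\otimes l^*)$ and recognising $(F\otimes 1)$ as a commutator $[\varSigma^2 D,1\otimes l^*]$ preceded by $1\otimes l$. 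Verifying that these rewritten cross terms indeed have zero image when the original $\omega$ did, and that they do not contribute a spurious $\mathbb{C}[z,z^{-1}]$-component, is the delicate point; everything else is a routine transcription of the arguments already used for $\pi(\Omega^n(\varSigma^2\mathcal{A}))$.
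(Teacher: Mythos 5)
Your skeleton is the paper's: expand $\omega\in J_0^1(\varSigma^2\mathcal{A})$ into a pure $\mathcal{A}\otimes\mathcal{S}$ piece, a pure $\mathbb{C}[z,z^{-1}]$ piece and cross terms, absorb the cross terms into the $\mathcal{A}\otimes\mathcal{S}$ piece, and then split the relation $\pi(\omega)=0$ using the degree-one direct sum of Proposition (\,\ref{first main proposition}\,). The gap is in the only step carrying real content, namely the absorption. The device you propose --- inserting $1=(1\otimes l)(1\otimes l^*)$ and writing $F\otimes 1$ as $(1\otimes l)[\varSigma^2 D,1\otimes l^*]$, as in Lemmas (\,\ref{referring lemma 0}\,), (\,\ref{referring lemma 1}\,) --- cannot do this job: the commutator there is with $1\otimes l^*$, which is \emph{not} an element of $\mathcal{A}\otimes\mathcal{S}$, so the rewritten expressions remain forms over $\varSigma^2\mathcal{A}$ and never become elements of $\Omega^\bullet(\mathcal{A}\otimes\mathcal{S})$, which is exactly what absorption requires. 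Lemma (\,\ref{referring lemma 2}\,) likewise only says the degree-one cross terms lie in $\pi(\Omega^1(\mathcal{A}\otimes\mathcal{S}))$ as operators; it does not hand you a universal $1$-form over the subalgebra realizing them, let alone one compatible with applying $d$. Your phrase that the cross terms ``are actually in $dJ_0^1(\mathcal{A}\otimes\mathcal{S})$'' also mixes degrees: the cross contributions to $\pi(\omega)$ are $1$-forms and those to $\pi(d\omega)$ are $2$-forms, and what is needed is a statement about both at once.

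Concretely, the missing idea is a \emph{simultaneous} replacement: each cross term $(a_{0k}\otimes T_{0k})\,df_{1k}$ (and its mirror) must be replaced by $\sum_{k'}(b_{0k'}\otimes S_{0k'})\,d(b_{1k'}\otimes S_{1k'})$ with all $S$'s in $\mathcal{S}$ such that, with one and the same choice of $S_{0k'},S_{1k'}$, both $(a_{0k}\otimes T_{0k})[\varSigma^2 D,f_{1k}]=\sum_{k'}(b_{0k'}\otimes S_{0k'})[\varSigma^2 D,b_{1k'}\otimes S_{1k'}]$ and the corresponding equality of the full commutator products hold. Only then is the modified piece $\omega_{\mathcal{A}\mathcal{S}}$ an element of $J_0^1(\mathcal{A}\otimes\mathcal{S})$ \emph{and} $\pi(d\omega)=\pi(d\omega_{\mathcal{A}\mathcal{S}})+\pi(d\omega_{S^1})$. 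Matching only in degree one leaves $\pi(d\omega)$ uncontrolled; matching only in degree two leaves the remaining pure double-commutator term outside $\pi(dJ_0^1(\mathcal{A}\otimes\mathcal{S}))$, because its degree-one relation has been spoiled by the cross terms --- your remark about ``verifying that the rewritten cross terms have zero image'' is not the issue. The paper secures the simultaneous matching by an explicit computation with no analogue in your proposal: $\sigma'(f_{1k}')$ is banded with zero diagonal, so $T_{0k}\sigma'(f_{1k}')=T_{0k}\widetilde{\beta}$ for a finite truncation $\widetilde{\beta}\in\mathcal{S}$, and since $\widetilde{\beta}$ has vanishing diagonal it is of the form $[N,S_1]$ with $S_1\in\mathcal{S}$; taking $b_{0k'}=a_{0k}$, $b_{1k'}=1$, $S_{0k'}=T_{0k}$ then solves equations (\,\ref{equation}\,) in both degrees at once. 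Without this (or an equivalent construction) the ``absorption'' step, and hence the inclusion $\subseteq$, is not established; the tools you cite do not substitute for it.
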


\begin{proof}
Suppose $\zeta = \sum_k d(a_{0k}\otimes T_{0k} + f_{0k})d(a_{1k}\otimes T_{1k} + f_{1k})$ be an element of $dJ_0^1(\varSigma^2 \mathcal{A})$. Then,
\begin{eqnarray*}
\sum_k (a_{0k}\otimes T_{0k} + f_{0k})[\varSigma^2 D,a_{1k}\otimes T_{1k} + f_{1k}] = 0
\end{eqnarray*}
Then $\,\pi(\zeta) = \sum_k [\varSigma^2 D,a_{0k}\otimes T_{0k} + f_{0k}][\varSigma^2 D,a_{1k}\otimes T_{1k} + f_{1k}]\,$ equals to the following sum,
\begin{eqnarray*}
&   & \sum_k\,\, [\varSigma^2 D,a_{0k}\otimes T_{0k}][\varSigma^2 D,a_{1k}\otimes T_{1k}] + [\varSigma^2 D,f_{0k}][\varSigma^2 D,f_{1k}]\\
&   & \quad\quad +\, [\varSigma^2 D,a_{0k}\otimes T_{0k}][\varSigma^2 D,f_{1k}] + [\varSigma^2 D,f_{0k}][\varSigma^2 D,a_{1k}\otimes T_{1k}].
\end{eqnarray*}
The term $\sum_k [\varSigma^2 D,f_{0k}][\varSigma^2 D,f_{1k}]$ lies in $\pi \left(dJ_0^1(\mathbb{C}[z,z^{-1}])\right)$. If we can write each term $[\varSigma^2 D,a_{0k}\otimes T_{0k}][\varSigma^2 D,f_{1k}]\, \, $ as
$\, \, \sum_{k^\prime} [\varSigma^2 D,b_{0k^\prime}\otimes S_{0k^\prime}][\varSigma^2 D,b_{1k^\prime}\otimes S_{1k^\prime}]\, \, $ such that 
\begin{eqnarray*}
(a_{0k}\otimes T_{0k})[\varSigma^2 D,f_{1k}] & = & \sum_{k^\prime} (b_{0k^\prime}\otimes S_{0k^\prime})([\varSigma^2 D,b_{1k^\prime}\otimes S_{1k^\prime}])
\end{eqnarray*}
where $S_{0k^\prime}$ and $S_{1k^\prime}$'s are from $\mathcal{S}$ and similarly for the term $[\varSigma^2 D,f_{0k}][\varSigma^2 D,a_{1k}\otimes T_{1k}]\, $, then we can conclude that 
$\pi \left(dJ_0^1(\varSigma^2 \mathcal{A})\right)\subseteq \pi \left(dJ_0^1(\mathcal{A}\otimes \mathcal{S})\right) \bigoplus \, \pi \left(dJ_0^1(\mathbb{C}[z,z^{-1}])\right)$. First let $b_{1k^\prime} = 1$ for all $k^\prime$.
Then we have the following equations to solve,
\begin{eqnarray}
Fa_{0k}\otimes T_{0k}f_{1k}^\prime & = & \sum_{k^\prime} Fb_{0k^\prime}\otimes S_{0k^\prime}[N,S_{1k^\prime}]
\end{eqnarray}
$$\quad\quad F[D,a_{0k}]\otimes T_{0k}f_{1k}^\prime+a_{0k}\otimes[N, T_{0k}]f_{1k}^\prime = \,\sum_{k^\prime}F[D,b_{0k^\prime}]\otimes S_{0k^\prime}[N,S_{1k^\prime}]+b_{0k^\prime}\otimes[N,S_{0k^\prime}][N,S_{1k^\prime}]\,.$$
For that it is enough to solve the following equations
\begin{eqnarray}\label{equation}
T_{0k}f_{1k}^\prime & = & \sum_{k^\prime} S_{0k^\prime}[N,S_{1k^\prime}]
\end{eqnarray}
$$\,\,\,[N, T_{0k}]f_{1k}^\prime = \,\,\sum_{k^\prime} [N,S_{0k^\prime}][N,S_{1k^\prime}]\,.$$
Note that $f_{1k}^\prime$ is of the form $\,\sum_{i=1}^n \lambda_i z^i + \sum_{j=1}^m \lambda_{-j} {(z^{-1})}^j$. Then $\sigma^\prime(f_{1k}^\prime) \in \mathcal{B}\left(\ell^2(\mathbb{N})\right)$ is the following matrix
$$
\begin{pmatrix}
 0 & \lambda_1 & \lambda_2 & \ldots & \ldots & \lambda_n & 0 & \ldots & \ldots \ldots\\
 \lambda_{-1} & 0 & \lambda_1 & \lambda_2 & \ldots & \ldots & \lambda_n & 0 & \ldots \ldots\\
 \ldots & \ldots & \ldots & \ldots & \ldots & \ldots & \ldots & \ldots & \ldots \ldots\\
 \lambda_{-m} & \lambda_{-m+1} & \ldots & \ldots & \ldots & \ldots & \ldots & \ldots & \ldots \ldots \\
 0 & \lambda_{-m} & \lambda_{-m+1} & \ldots & \ldots & \ldots & \ldots & \ldots & \ldots \ldots\\
 0 & 0 & \lambda_{-m} & \lambda_{-m+1} & \ldots & \ldots & \ldots & \ldots & \ldots \ldots\\
 \ldots & \ldots & \ldots & \ldots & \ldots & \ldots & \ldots & \ldots & \ldots \ldots
\end{pmatrix}.
$$
Each row and column of this matrix has only finitely many non-zero entries and all the diagonal entries are zero. Denote this matrix by $(\beta_{pq})_{p,q}$. Notice that $(e_{ij}).\sigma^\prime(f_{1k}^\prime)$ is the matrix
whose $i$-th row consists of $j$-th row of $(\beta_{pq})_{p,q}$ and zero everywhere else, whereas $\sigma^\prime(f_{1k}^\prime).(e_{ij})$ is the matrix whose $j$-th column consists of $i$-th column of $(\beta_{pq})_{p,q}$ and
zero everywhere else. Let $\,T_{0k}$ be denoted by the matrix $(\alpha_{ij})_{ij}$. Since $\,T_{0k}\in\mathcal{S}$, one can assume that $\alpha_{ij}=0$ for all $i\geq r+1$ and $j\geq s+1$, for some $r,s$.
Observe that $\,T_{0k}\sigma^\prime(f_{1k}^\prime)=(\alpha_{ij})_{ij}(\widetilde{\beta_{pq}})_{p,q}\,$, where $$\widetilde{\beta_{pq}}=\begin{cases}
                                                                                                                                        \begin{array}{lcl}
                                                                                                                                         \beta_{pq}\quad for\,\,\, 1\leq p\leq s\,,\,q\leq n+s\,,\\
                                                                                                                                         0\quad\quad otherwise\,.
                                                                                                                                        \end{array}
                                                                                                                                       \end{cases}
$$
Hence $(\widetilde{\beta_{pq}})_{p,q}\in\mathcal{S}$ and we have a solution for equation ($\,$\ref{equation}$\,$). Similarly one can do for the term $[\varSigma^2 D,f_{0k}][\varSigma^2 D,a_{1k}\otimes T_{1k}]$.
\end{proof}

\begin{proposition}\label{forms for denominator}
$\pi \left(dJ_0^n(\varSigma^2 \mathcal{A})\right) = \pi \left(dJ_0^n(\mathcal{A}\otimes \mathcal{S})\right) \bigoplus \, \pi \left(dJ_0^n(\mathbb{C}[z,z^{-1}])\right)$ for all $n\geq 1\,$.
\end{proposition}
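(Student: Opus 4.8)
The plan is to reuse the argument of the preceding Lemma (which is precisely the case $n=1$), organised by the vector space splitting $\varSigma^2\mathcal{A}=(\mathcal{A}\otimes\mathcal{S})\oplus\mathbb{C}[z,z^{-1}]$, with the ``$z$'' part streamlined by two maps on universal forms. The algebra surjection $\rho:\varSigma^2\mathcal{A}\twoheadrightarrow\mathbb{C}[z,z^{-1}]$ induces a morphism of differential graded algebras $\rho_*:\Omega^\bullet(\varSigma^2\mathcal{A})\to\Omega^\bullet(\mathbb{C}[z,z^{-1}])$; and, since $\sigma^\prime$ is unital, applying $\sigma^\prime$ in each tensor slot gives a linear map $\sigma^\prime_*:\Omega^\bullet(\mathbb{C}[z,z^{-1}])\to\Omega^\bullet(\varSigma^2\mathcal{A})$ which is not multiplicative but \emph{does} commute with $d$ (as $d$ merely prepends a unit), satisfies $\rho_*\sigma^\prime_*=\mathrm{id}$, has $\sigma^\prime_*\rho_*$ equal to the projection of $\Omega^\bullet(\varSigma^2\mathcal{A})$ onto its ``all-$z$'' tensor summand, and obeys $\pi\circ\sigma^\prime_*=F^\bullet\otimes\pi_N$ (from $[\varSigma^2 D,\sigma^\prime(f)]=F\otimes f^\prime$). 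Hence $\sigma^\prime_*$ carries $J_0^\bullet(\mathbb{C}[z,z^{-1}])$ into $J_0^\bullet(\varSigma^2\mathcal{A})$ and $dJ_0^\bullet(\mathbb{C}[z,z^{-1}])$ into $dJ_0^\bullet(\varSigma^2\mathcal{A})$; together with $J_0^\bullet(\mathcal{A}\otimes\mathcal{S})\subseteq J_0^\bullet(\varSigma^2\mathcal{A})$ (from the ideal inclusion $\mathcal{A}\otimes\mathcal{S}\hookrightarrow\varSigma^2\mathcal{A}$) this yields the inclusion $\supseteq$, the directness on the right coming from Proposition~\ref{first main proposition} in degree $n+1$.

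For $\subseteq$, fix $\zeta\in J_0^n(\varSigma^2\mathcal{A})$ and expand every generator appearing in $\zeta$ along $(\mathcal{A}\otimes\mathcal{S})\oplus\mathbb{C}[z,z^{-1}]$, so that $\zeta=\sum_P\zeta_P$ splits into the $2^{n+1}$ ``typed'' pieces indexed by patterns $P$ of $\mathcal{S}$- and $z$-slots; the all-$z$ piece is $\zeta_z:=\sigma^\prime_*\rho_*(\zeta)\in\Omega^n(\mathbb{C}[z,z^{-1}])$, the all-$\mathcal{S}$ piece lies in $\Omega^n(\mathcal{A}\otimes\mathcal{S})$, and every other piece carries a slot of each type. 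By Proposition~\ref{first main proposition} and Lemma~\ref{referring lemma 2}, the ``$F^n\otimes(\text{band matrix})$'' component of $\pi(\zeta)$ is exactly $\pi(\zeta_z)=F^n\otimes\pi_N(\rho_*\zeta)$; since $\pi(\zeta)=0$ we get $\pi_N(\rho_*\zeta)=0$, i.e. $\rho_*\zeta\in J_0^n(\mathbb{C}[z,z^{-1}])$. Consequently $d\zeta_z=\sigma^\prime_*(d\rho_*\zeta)$ with $d\rho_*\zeta\in dJ_0^n(\mathbb{C}[z,z^{-1}])$, so $\pi(d\zeta_z)=F^{n+1}\otimes\pi_N(d\rho_*\zeta)$ lies in $\pi(dJ_0^n(\mathbb{C}[z,z^{-1}]))$. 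This disposes of the $z$-summand.

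It remains to absorb the mixed pieces into $\Omega^n(\mathcal{A}\otimes\mathcal{S})$. For each mixed monomial $\mu=x_0\,dx_1\cdots dx_n$ occurring in $\zeta$ I would construct $\widetilde{\mu}\in\Omega^n(\mathcal{A}\otimes\mathcal{S})$ with $\pi(\mu)=\pi(\widetilde{\mu})$ \emph{and} $\pi(d\mu)=\pi(d\widetilde{\mu})$, just as in the equation-solving step of the preceding Lemma: the ingredients are the Toeplitz identity $F\otimes 1=(1\otimes l)[\varSigma^2 D,1\otimes l^*]=\pi\big((1\otimes l)\,d(1\otimes l^*)\big)$ of Lemma~\ref{referring lemma 0}, the relation $[\varSigma^2 D,\sigma^\prime(f)]=F\otimes f^\prime$, and the finite-support facts that $Tf^\prime\in\mathcal{S}$ when $T\in\mathcal{S}$ and that every element of $\mathcal{S}$ has the form $\sum T_0[N,T_1]$ (Lemma~\ref{finite support 1})---the same facts underlying Lemmas~\ref{referring lemma 1}--\ref{referring lemma 3}---applied by peeling the $z$-slots of $\mu$ off one at a time from the left. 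Replacing each mixed $\zeta_P$ by $\widetilde{\zeta_P}$ and keeping the all-$\mathcal{S}$ and all-$z$ pieces gives $\zeta^\prime=\zeta^\prime_{\mathcal{S}}+\zeta^\prime_z$ with $\zeta^\prime_{\mathcal{S}}\in\Omega^n(\mathcal{A}\otimes\mathcal{S})$, $\zeta^\prime_z=\zeta_z\in\Omega^n(\mathbb{C}[z,z^{-1}])$, $\pi(\zeta^\prime)=\pi(\zeta)=0$ and $\pi(d\zeta^\prime)=\pi(d\zeta)$. The directness in Proposition~\ref{first main proposition} forces $\pi(\zeta^\prime_{\mathcal{S}})=0=\pi(\zeta^\prime_z)$, hence $\zeta^\prime_{\mathcal{S}}\in J_0^n(\mathcal{A}\otimes\mathcal{S})$ and $\zeta^\prime_z\in J_0^n(\mathbb{C}[z,z^{-1}])$, so $\pi(d\zeta)=\pi(d\zeta^\prime_{\mathcal{S}})+\pi(d\zeta^\prime_z)$ lies in $\pi(dJ_0^n(\mathcal{A}\otimes\mathcal{S}))\oplus\pi(dJ_0^n(\mathbb{C}[z,z^{-1}]))$, as wanted. (Equivalently one may run an induction on $n$, the base case being the preceding Lemma and the inductive step the same conversion.)

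The hard part is this conversion $\mu\mapsto\widetilde{\mu}$: it has to preserve \emph{both} $\pi$ and $\pi\circ d$---the first so that $\zeta^\prime$ stays in $J_0^n$ after reassembly, the second so that $\pi(d\zeta)$ is unaltered, and neither is automatic from the other. For a monomial with several interleaved $z$- and $\mathcal{S}$-slots one must check that the finite-support bookkeeping survives $d$, i.e. that the \emph{derivatives} of the replacement $\mathcal{A}\otimes\mathcal{S}$-forms still involve only finitely supported matrices; this is the computational core, and it is the point at which Conditions $(A),(B)$ (and the standard fact $[F,a]\in\mathcal{K}(\mathcal{H})$) are genuinely used, exactly as in the proof of Proposition~\ref{justification for assumption}.
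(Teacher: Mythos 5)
Your proposal is correct and takes essentially the same route as the paper: the substantive inclusion $\subseteq$ is obtained, exactly as in the paper's one-line argument, by re-using the equation-solving replacement of the preceding Lemma to convert every mixed monomial into a form over $\mathcal{A}\otimes \mathcal{S}$ preserving both $\pi$ and $\pi\circ d$, after which the all-$z$ piece and the directness of Proposition~\ref{first main proposition} finish the proof; your scaffolding with $\rho_*$, $\sigma^\prime_*$ and the typed decomposition only makes explicit what the paper leaves implicit. One small correction: by the paper's own remark following Lemma~\ref{the denominators}, Condition $(B)$ is not needed for this Proposition, and the replacement step itself is pure finite-support matrix truncation as in the preceding Lemma, so your closing attribution of the computational core to Conditions $(A),(B)$ slightly overstates their role here.
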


\begin{proof}
For arbitrary `$n$' it follows from our observation in the previous Lemma that both $[\varSigma^2 D,a\otimes T][\varSigma^2 D,f]$ and $[\varSigma^2 D,g][\varSigma^2 D,a^\prime\otimes T^\prime]$ for $f,g\in\mathbb{C}[z,z^{-1}]$
can be replaced by $[\varSigma^2 D,b\otimes S][\varSigma^2 D,b^\prime\otimes S^\prime]\, $ where $\, T,T^\prime,S,S^\prime$ all lie in $\mathcal{S}$.
\end{proof}

\begin{lemma}\label{forms for circle 1}
For the spectral triple $\left(\mathbb{C}[z,z^{-1}],\ell^2(\mathbb{N}),N\right)$, we have $\,\pi_{N}\left(\Omega^n(\mathbb{C}[z,z^{-1}])\right) = \mathbb{C}[z,z^{-1}]\, $ for all $\, n \geq 0$.
\end{lemma}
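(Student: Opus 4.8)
The statement asserts that for the spectral triple $\left(\mathbb{C}[z,z^{-1}],\ell^2(\mathbb{N}),N\right)$ one has $\pi_N\left(\Omega^n(\mathbb{C}[z,z^{-1}])\right)=\mathbb{C}[z,z^{-1}]$ for every $n\geq 0$. The plan is to argue by direct construction, exhibiting explicit elements of $\Omega^n$ whose image under $\pi_N$ is an arbitrary prescribed Laurent polynomial. The $n=0$ case is trivial since $\pi_N(\Omega^0)=\pi_N(\mathbb{C}[z,z^{-1}])=\mathbb{C}[z,z^{-1}]$ acting on $\ell^2(\mathbb{N})$ via $\sigma'$. For $n\geq 1$ the key point is that $\mathbb{C}[z,z^{-1}]$ is commutative and unital, so $\pi_N\left(\Omega^n(\mathbb{C}[z,z^{-1}])\right)$ is a two-sided ideal (in fact a subalgebra-stable subspace) of the image of $\mathbb{C}[z,z^{-1}]$; since $1\in\mathbb{C}[z,z^{-1}]$ it suffices to produce a single identity-like element, or more directly to realize each monomial $z^m$.

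First I would record the basic commutator computation: for $f=\sum_k\lambda_k z^k$ one has $[N,\sigma'(f)]=\sigma'(f')$ where, in the paper's notation, $f'=[N,f]$, and concretely $[N,z^k]$ corresponds to the matrix $k$ times the shifted-diagonal pattern of $z^k$ — in particular $[N,z]=\sigma'(z)$-type shift is \emph{not} zero. Then for $n=1$, I would observe that $z^m = z^m \cdot z^{-1}\cdot [N, z]$ up to a nonzero scalar — more carefully, choosing suitable $f_0,f_1\in\mathbb{C}[z,z^{-1}]$ one gets $f_0[N,f_1]=z^m$; for instance since $[N,z]$ is a nonzero scalar multiple of the appropriate shift, a telescoping choice recovers any monomial, hence all of $\mathbb{C}[z,z^{-1}]$. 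For general $n$, I would use that $\Omega^n(\mathcal{B})=\Omega^1(\mathcal{B})^{\otimes_{\mathcal{B}} n}$ together with unitality: having written $1\in\pi_N(\Omega^1)$ (or any monomial), one multiplies by a length-$(n-1)$ factor of the form $\big(z^{-j}[N,z^j]\big)^{\cdots}$, each factor being a nonzero scalar, so that $\pi_N(\Omega^n)\supseteq\pi_N(\Omega^1)=\mathbb{C}[z,z^{-1}]$. The reverse inclusion is automatic since every $a_0[N,a_1]\cdots[N,a_n]$ is a product of operators in the $\ast$-algebra generated by $\sigma'(\mathbb{C}[z,z^{-1}])$, which is $\mathbb{C}[z,z^{-1}]$ itself (Laurent polynomials in $z$ are closed under the relevant products and under $[N,\cdot]$).

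The main obstacle is purely computational: verifying that the chosen products of $\sigma'(z^{\pm k})$ and $[N,\sigma'(z^{\pm k})]$ do not accidentally vanish and that one can isolate an arbitrary monomial $z^m$ with a nonzero coefficient. The subtlety is that $\sigma'$ is only a linear splitting, so $\sigma'(z^j)\sigma'(z^k)\neq\sigma'(z^{j+k})$ in general when negative powers are involved (the product picks up compact corrections coming from $l l^\ast \neq 1$), but since we are computing inside $\mathcal{B}(\ell^2(\mathbb{N}))$ rather than modulo compacts, one must be careful to track these terms — or, more cleanly, note that for computing $\pi_N$ on $\Omega^n(\mathbb{C}[z,z^{-1}])$ it is the actual operators $N$ and $\sigma'(f)$ that enter, and $[N,\sigma'(f)]=\sigma'(f)\,\text{(diagonal weight)}-\text{(diagonal weight)}\,\sigma'(f)$ which one computes entrywise. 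I expect the cleanest route is to mirror the matrix-level argument already used in Lemma~\ref{finite support 1}: express $z^m$ (or $\sigma'$ of it) as an explicit telescoping sum $\sum z^{a}[N,z^{b}]\cdots$ with the scalars $1/(\text{weight differences})$ inserted, exactly as the $T=\sum T(\cdots[N,e_{ij}]\cdots)$ formula does there, and then check $\pi_N$ of this element equals $z^m$ by a one-line entrywise verification.
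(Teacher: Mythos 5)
Your proposal is correct and takes essentially the same route as the paper's proof, which sets $\xi=z$, $\eta=z^{-1}$, uses $[N,\xi]=\xi$, $[N,\eta]=-\eta$ and $\xi\eta=1$, and realizes an arbitrary $\phi\in\mathbb{C}[z,z^{-1}]$ as $\pi\left(\phi\,(d\xi\,d\eta)^r\right)=(-1)^r\phi$ for $n=2r$ and as $\pi\left(\phi\xi\,d\eta\,(d\xi\,d\eta)^r\right)$ for $n=2r+1$ --- precisely your device of padding by commutator pairs whose product is a nonzero scalar. The only point to tighten is your parenthetical claim that the $\ast$-algebra generated by $\sigma'(\mathbb{C}[z,z^{-1}])$ is $\mathbb{C}[z,z^{-1}]$ itself: it is the algebraic Toeplitz algebra (since $l^{\ast}l=1-u$), but, as you yourself observe, the resulting finite-rank corrections are immaterial because $\pi$ is valued in $\mathcal{Q}(\mathcal{H})=\mathcal{B}(\mathcal{H})/\mathcal{K}(\mathcal{H})$, so the inclusion $\pi_N\left(\Omega^n(\mathbb{C}[z,z^{-1}])\right)\subseteq\mathbb{C}[z,z^{-1}]$ still holds.
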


\begin{proof}
Clearly $\pi\left(\Omega^n(\mathbb{C}[z,z^{-1}])\right) \subseteq \mathbb{C}[z,z^{-1}]\, $. For the other inclusion consider $\xi,\eta\in \mathbb{C}[z,z^{-1}]$ where $\xi=z,\eta=z^{-1}$. Then $[N,\xi]=\xi$ and
$[N,\eta]=-\eta$.
\newline
\underline{Case $\, 1\, :$} $\, \, $ Suppose $n = 2r$ is even. Choose any $\phi \in \mathbb{C}[z,z^{-1}]$ and consider $\omega = \phi \underbrace{(d\xi d\eta)\ldots (d\xi d\eta)}_r \in \Omega^n\left(\mathbb{C}[z,z^{-1}]\right)
$. Then $$\pi(\omega) = \phi(\,\underbrace{[N,\xi][N,\eta])\ldots ([N,\xi][N,\eta]}_r\,)\,.$$ But $\,[N,\xi][N,\eta]= -1$. This proves that $\mathbb{C}[z,z^{-1}] \subseteq \pi\left(\Omega^n(\mathbb{C}[z,z^{-1}])\right)$.
\newline
\underline{Case $\, 2\, :$} $\, \, $ Suppose $n=2r+1$ is odd. Choose any $\phi \in C(S^1)$. Consider $\omega = \phi \xi d\eta \underbrace{(d\xi d\eta)\ldots (d\xi d\eta)}_r \in \Omega^n\left(C(S^1)\right)$
for $r\neq 0$ and $\,\omega = \phi \xi d\eta$ for $r=0$.
\end{proof}

\begin{lemma}\label{forms for circle 2}
For the spectral triple $\left(\mathbb{C}[z,z^{-1}],\ell^2(\mathbb{N}),N\right)$, we have $\,\pi_N\left(dJ_0^n(\mathbb{C}[z,z^{-1}])\right) = \mathbb{C}[z,z^{-1}]\, $ for all $\, n \geq 1$.
\end{lemma}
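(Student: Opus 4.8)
The plan is to prove the two inclusions separately; the inclusion $\subseteq$ is immediate, while $\supseteq$ will be obtained by exhibiting an explicit ``junk'' form in each degree. For $\subseteq$: since $dJ_0^n(\mathbb{C}[z,z^{-1}]) \subseteq \Omega^{n+1}(\mathbb{C}[z,z^{-1}])$, applying $\pi_N$ and Lemma \ref{forms for circle 1} gives $\pi_N(dJ_0^n(\mathbb{C}[z,z^{-1}])) \subseteq \pi_N(\Omega^{n+1}(\mathbb{C}[z,z^{-1}])) = \mathbb{C}[z,z^{-1}]$.

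For $\supseteq$ I would first treat $n=1$. Working under the identification $\pi_N(\Omega^\bullet(\mathbb{C}[z,z^{-1}])) = \mathbb{C}[z,z^{-1}]$ of Lemma \ref{forms for circle 1} and using $[N,z^m] = m\,z^m$, set $\omega_1 := 2\,z\,d(z^{-1}) - z^2\,d(z^{-2}) \in \Omega^1(\mathbb{C}[z,z^{-1}])$. One computes $\pi_N(\omega_1) = 2z[N,z^{-1}] - z^2[N,z^{-2}] = -2 + 2 = 0$, so $\omega_1 \in J_0^1$, whereas $\pi_N(d\omega_1) = 2[N,z][N,z^{-1}] - [N,z^2][N,z^{-2}] = -2 + 4 = 2$, a unit of $\mathbb{C}[z,z^{-1}]$. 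Given any $\phi \in \mathbb{C}[z,z^{-1}]$, the element $\tfrac12\,\phi\,\omega_1$ again lies in $J_0^1$ (its image under $\pi_N$ is $\tfrac12\phi\,\pi_N(\omega_1) = 0$), and the Leibniz rule yields $\pi_N(d(\tfrac12\phi\,\omega_1)) = \tfrac12[N,\phi]\,\pi_N(\omega_1) + \tfrac12\phi\,\pi_N(d\omega_1) = \phi$. Hence $\mathbb{C}[z,z^{-1}] \subseteq \pi_N(dJ_0^1(\mathbb{C}[z,z^{-1}]))$, settling the case $n=1$.

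For $n \ge 2$ I would bootstrap from $\omega_1$. With $\xi = z$, $\eta = z^{-1}$ one has $[N,\xi][N,\eta] = -1$, as in Lemma \ref{forms for circle 1}; let $\psi$ be the exact $(n-1)$-form equal to $(d\xi\,d\eta)^{(n-1)/2}$ when $n$ is odd and to $(d\xi\,d\eta)^{(n-2)/2}d\xi$ when $n$ is even, and put $\omega_n := \omega_1\,\psi \in \Omega^n(\mathbb{C}[z,z^{-1}])$. Since $\pi_N$ is multiplicative, $\pi_N(\omega_n) = \pi_N(\omega_1)\,\pi_N(\psi) = 0$, so $\omega_n \in J_0^n$; and since $\psi$ is a product of exact one-forms we have $d\psi = 0$, so the Leibniz rule collapses $d\omega_n$ to $(d\omega_1)\,\psi$, whence $\pi_N(d\omega_n) = 2\,\pi_N(\psi)$, which equals $\pm 2$ when $n$ is odd and $\pm 2z$ when $n$ is even --- in either case a unit. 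Exactly as in the case $n=1$, for every $\phi$ the element $\phi\,\omega_n$ lies in $J_0^n$ and $\pi_N(d(\phi\,\omega_n)) = \phi\,\pi_N(d\omega_n)$, so letting $\phi$ vary sweeps out all of $\mathbb{C}[z,z^{-1}]$; equivalently, $\pi_N(dJ_0^n(\mathbb{C}[z,z^{-1}]))$ is an ideal of $\mathbb{C}[z,z^{-1}]$ containing a unit, hence is the whole ring. I expect the only genuinely delicate point to be the construction at $n=1$: one has to produce a nonzero junk $1$-form, which is exactly where the weights $[N,z^m] = m\,z^m$ must be exploited so as to kill $\pi_N(\omega_1)$ without killing $\pi_N(d\omega_1)$. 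Everything past that step is formal, via multiplicativity of $\pi$ and the vanishing of $d$ on exact forms.
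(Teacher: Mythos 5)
Your proof is correct and takes essentially the same approach as the paper's: produce an explicit element of $J_0^n\left(\mathbb{C}[z,z^{-1}]\right)$ whose image under $\pi_N\circ d$ is a unit, and then use the fact that $\pi_N\left(dJ_0^n\right)$ is an ideal (your Leibniz computation with the prefactor $\phi$ is just a direct verification of that ideal property). The only cosmetic difference is that you generate all degrees from a single junk $1$-form by multiplying with the closed forms $(d\xi\,d\eta)^k$ (times $d\xi$ for even $n$), whereas the paper writes separate explicit forms for $n=1$, odd $n\geq 3$ and even $n\geq 2$ --- its odd case being exactly your construction with seed $\xi\,d\eta+\eta\,d\xi$.
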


\begin{proof}
It is clear from previous Lemma ($\, $\ref{forms for circle 1}$\, $) that $\pi\left(dJ_0^n(\mathbb{C}[z,z^{-1}])\right) \subseteq \mathbb{C}[z,z^{-1}]\, $. For the other inclusion notice that for
$n \geq 1,\, \, \pi\left(dJ_0^n(\mathbb{C}[z,z^{-1}])\right)\, $ is an ideal in $\mathbb{C}[z,z^{-1}]$. We will show that $1\in \mathbb{C}[z,z^{-1}]$ lies in $\pi\left(dJ_0^n(\mathbb{C}[z,z^{-1}])\right)\, $. Consider
$\,\xi=z,\,\eta=z^{-1}\in\mathbb{C}[z,z^{-1}]$.
\newline
\underline{Case $\, 1\, :$} $\, \, $ For $n \geq 3\, $ odd, consider
\begin{eqnarray*} 
\omega = \xi d\eta \underbrace{(d\xi d\eta)\ldots (d\xi d\eta)}_{(n-1)/2}\, +\, \eta d\xi \underbrace{(d\xi d\eta)\ldots (d\xi d\eta)}_{(n-1)/2} \, \, \in \Omega^n\left(\mathbb{C}[z,z^{-1}]\right). 
\end{eqnarray*}
\underline{Case $\, 2\, :$} $\, \, $ For $n\geq 2\, $ even, consider
\begin{eqnarray*} 
\omega = -\xi^2 d\eta d\eta\underbrace{(d\xi d\eta)\ldots (d\xi d\eta)}_{(n-2)/2}\, +\, \eta^2 d\xi d\xi\underbrace{(d\xi d\eta)\ldots (d\xi d\eta)}_{(n-2)/2} \, \, \in \Omega^n\left(\mathbb{C}[z,z^{-1}]\right). 
\end{eqnarray*}
\underline{Case $\, 3\, :$} $\, \, $ For $n=1\, $, consider $\omega = \xi d\eta\, +\, \eta d\xi$.
\newline
One can check that for all $n\geq 1$, $\pi(\omega)=0$ i,e. $\omega \in J_0^n\left(\mathbb{C}[z,z^{-1}]\right)$. But
\begin{center}
$\pi(d\omega) = \begin{cases}
                 \begin{array}{lcl}
                  -2\quad\quad\quad\quad\quad\quad\quad\, for\,\,\,\, n=1\,,\\
                  -2(-1)^{(n-1)/2}\quad\quad for\,\,\,\, n\geq 3\,\, odd\,,\\
                  -4(-1)^{(n-2)/2}\quad\quad for\,\,\,\, n\geq 2\,\, even\,.
                 \end{array}
                \end{cases}
$\end{center}
This justifies our claim.
\end{proof}

\begin{proposition}\label{forms for circle}
For the spectral triple $\left(\mathbb{C}[z,z^{-1}],\ell^2(\mathbb{N}),N\right)$,
\begin{enumerate}
 \item $\Omega_N^n\left(\mathbb{C}[z,z^{-1}]\right) = \mathbb{C}[z,z^{-1}]\, $,~ for $\, n = 0\, ,1\, $.
 \item $\Omega_N^n\left(\mathbb{C}[z,z^{-1}]\right) = 0\, $,~ for $\, n \geq 2\, $.
\end{enumerate}
\end{proposition}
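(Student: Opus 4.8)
\textbf{Proof plan for Proposition \ref{forms for circle}.}
The strategy is to assemble the claim directly from the two preceding lemmas together with the basic isomorphism \eqref{main iso}. Recall that for any spectral triple one has $\Omega_N^n \cong \pi_N(\Omega^n)/\pi_N(dJ_0^{n-1})$, so the only thing that needs to be fed into the machine is the description of the numerator $\pi_N(\Omega^n(\mathbb{C}[z,z^{-1}]))$ and of the piece $\pi_N(dJ_0^{n-1}(\mathbb{C}[z,z^{-1}]))$ that one quotients by.

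For item $(1)$, the case $n=0$ is immediate since $\Omega^0_N=\mathbb{C}[z,z^{-1}]$ by definition (there is no ideal to quotient by, as $J^0=\{0\}$). For $n=1$ one uses \eqref{main iso} to write $\Omega_N^1 \cong \pi_N(\Omega^1)/\pi_N(dJ_0^0)$; but $J_0^0 = \{\omega\in\Omega^0 : \pi_N(\omega)=0\}$ is zero because the representation of $\mathbb{C}[z,z^{-1}]$ on $\ell^2(\mathbb{N})$ is faithful, so $\pi_N(dJ_0^0)=0$ and Lemma \ref{forms for circle 1} with $n=1$ gives $\Omega_N^1 \cong \pi_N(\Omega^1(\mathbb{C}[z,z^{-1}])) = \mathbb{C}[z,z^{-1}]$.

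For item $(2)$, fix $n\geq 2$. Lemma \ref{forms for circle 1} gives $\pi_N(\Omega^n(\mathbb{C}[z,z^{-1}])) = \mathbb{C}[z,z^{-1}]$, and Lemma \ref{forms for circle 2} (applicable since $n-1\geq 1$) gives $\pi_N(dJ_0^{n-1}(\mathbb{C}[z,z^{-1}])) = \mathbb{C}[z,z^{-1}]$ as well. Hence by \eqref{main iso},
\begin{eqnarray*}
\Omega_N^n\left(\mathbb{C}[z,z^{-1}]\right) \;\cong\; \pi_N\left(\Omega^n(\mathbb{C}[z,z^{-1}])\right)\big/\pi_N\left(dJ_0^{n-1}(\mathbb{C}[z,z^{-1}])\right) \;=\; \mathbb{C}[z,z^{-1}]\big/\mathbb{C}[z,z^{-1}] \;=\; 0.
\end{eqnarray*}

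There is essentially no obstacle here; the proof is just bookkeeping. The one small point worth stating carefully is why $\pi_N(dJ_0^0)=0$, i.e. the injectivity of the representation of $\mathbb{C}[z,z^{-1}]$ on $\ell^2(\mathbb{N})$ — this is what lets the $n=1$ computation go through — together with noting that $J^n = J_0^n + dJ_0^{n-1}$ so that what is quotiented from $\pi_N(\Omega^n)$ in \eqref{main iso} is exactly $\pi_N(dJ_0^{n-1})$. With those remarks in place the proposition follows by simply combining Lemmas \ref{forms for circle 1} and \ref{forms for circle 2}.
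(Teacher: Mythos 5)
Your proof is correct and follows exactly the route the paper takes: its entire proof is ``Combine Lemma (\ref{forms for circle 1}) and (\ref{forms for circle 2})'', and you have simply spelled out the bookkeeping via the isomorphism (\ref{main iso}), including the (correct) observation that $J_0^0=0$ by faithfulness of the representation.
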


\begin{proof}
Combine Lemma ($\,$\ref{forms for circle 1}$\,$) and ($\,$\ref{forms for circle 2}$\,$).
\end{proof}

\begin{proposition}\label{imp theorem}
For $\, (\varSigma^2 \mathcal{A},\mathcal{H}\otimes \ell^2(\mathbb{N}),\varSigma^2 D)\, $,
\begin{enumerate}
 \item $\Omega_{\varSigma^2 D}^1(\varSigma^2 \mathcal{A}) \cong \Omega_{\varSigma^2 D}^1(\mathcal{A}\otimes \mathcal{S}) \bigoplus \,\mathbb{C}[z,z^{-1}]\,$,
 \item $\Omega_{\varSigma^2 D}^n(\varSigma^2 \mathcal{A}) \cong \Omega_{\varSigma^2 D}^n(\mathcal{A}\otimes \mathcal{S})\,\,$, for all $\, \, n\geq 2$.
\end{enumerate}
\end{proposition}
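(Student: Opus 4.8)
The plan is to combine the two structural results just established — Proposition~\ref{first main proposition} for the numerator and Proposition~\ref{forms for denominator} for the denominator — inside the isomorphism~(\ref{main iso}), and then read off the circle contribution from Proposition~\ref{forms for circle}.

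For every $n\ge 1$ one has $\Omega_{\varSigma^2 D}^n(\varSigma^2\mathcal{A})\cong\pi(\Omega^n(\varSigma^2\mathcal{A}))/\pi(dJ_0^{n-1}(\varSigma^2\mathcal{A}))$ by~(\ref{main iso}). Proposition~\ref{first main proposition} identifies the numerator with $\pi(\Omega^n(\mathcal{A}\otimes\mathcal{S}))\oplus\pi(\Omega^n(\mathbb{C}[z,z^{-1}]))$, and — for $n\ge 2$ — Proposition~\ref{forms for denominator} (applied with index $n-1\ge 1$) identifies the subspace $\pi(dJ_0^{n-1}(\varSigma^2\mathcal{A}))$ with $\pi(dJ_0^{n-1}(\mathcal{A}\otimes\mathcal{S}))\oplus\pi(dJ_0^{n-1}(\mathbb{C}[z,z^{-1}]))$. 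Since $J_0^{n-1}(\mathcal{B})\subseteq\Omega^{n-1}(\mathcal{B})$ and $d$ raises degree inside $\Omega^\bullet(\mathcal{B})$, each denominator summand lies in the corresponding numerator summand, so $\pi(dJ_0^{n-1}(\varSigma^2\mathcal{A}))$ is precisely the diagonal subspace of $\pi(\Omega^n(\varSigma^2\mathcal{A}))$ with respect to these two decompositions; a routine quotient-of-a-direct-sum argument then gives
$$\Omega_{\varSigma^2 D}^n(\varSigma^2\mathcal{A})\;\cong\;\frac{\pi(\Omega^n(\mathcal{A}\otimes\mathcal{S}))}{\pi(dJ_0^{n-1}(\mathcal{A}\otimes\mathcal{S}))}\;\bigoplus\;\frac{\pi(\Omega^n(\mathbb{C}[z,z^{-1}]))}{\pi(dJ_0^{n-1}(\mathbb{C}[z,z^{-1}]))}\,.$$
The first summand is $\Omega_{\varSigma^2 D}^n(\mathcal{A}\otimes\mathcal{S})$ by the non-unital prescription for Connes' calculus, and the second is $\Omega_N^n(\mathbb{C}[z,z^{-1}])$: indeed, on $\Omega^\bullet(\mathbb{C}[z,z^{-1}])$ one has $\pi_{\varSigma^2 D}(\,\cdot\,)=F^\bullet\otimes\pi_N(\,\cdot\,)$ (as recorded before Lemma~\ref{referring lemma 0}), and multiplication by $F^n\otimes(\,\cdot\,)$ is injective because $F^2=1$, so the numerator and denominator both strip the factor $F^n$. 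Proposition~\ref{forms for circle} now gives $\Omega_N^n(\mathbb{C}[z,z^{-1}])=0$ for $n\ge 2$, which is (2).

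For $n=1$ the same scheme applies once the analogue of Proposition~\ref{forms for denominator} is known for $dJ_0^{0}$; this is harmless, since in the situation of interest — classical spectral triples and their iterated quantum double suspensions — the algebras $\varSigma^2\mathcal{A}$, $\mathcal{A}\otimes\mathcal{S}$ and $\mathbb{C}[z,z^{-1}]$ act with no nonzero compact operators (a multiplication operator by a nonzero function is never compact, and this is inherited under the suspension, so e.g.\ $\sigma'(f)$ is compact only for $f=0$), whence $J_0^{0}=0$ for each of them and the degree-$1$ denominators all vanish. Thus $\Omega_{\varSigma^2 D}^1(\varSigma^2\mathcal{A})\cong\pi(\Omega^1(\varSigma^2\mathcal{A}))=\pi(\Omega^1(\mathcal{A}\otimes\mathcal{S}))\oplus\pi(\Omega^1(\mathbb{C}[z,z^{-1}]))\cong\Omega_{\varSigma^2 D}^1(\mathcal{A}\otimes\mathcal{S})\oplus\mathbb{C}[z,z^{-1}]$, using Lemma~\ref{forms for circle 1} for the last factor; this is (1).

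Since all the genuine work is carried by Propositions~\ref{first main proposition} and~\ref{forms for denominator}, the remaining argument is short; the one step deserving care is the compatibility check — that the denominator $\pi(dJ_0^{n-1}(\varSigma^2\mathcal{A}))$ really is the diagonal subspace of the numerator $\pi(\Omega^n(\varSigma^2\mathcal{A}))$, respecting the $(\mathcal{A}\otimes\mathcal{S})$- and $\mathbb{C}[z,z^{-1}]$-summands, so that the quotient genuinely splits as a direct sum rather than merely surjecting onto a sum of quotients. The $F$-power bookkeeping in the circle summand and the degenerate $n=1$ case are the only other points to watch, and neither is a real obstacle. If desired, one also checks that the splitting intertwines the induced differentials, upgrading the isomorphism to one of chain complexes.
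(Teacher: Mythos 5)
Your proof is correct and takes essentially the same route as the paper, whose entire argument for this proposition is the single line ``combine Propositions \ref{first main proposition}, \ref{forms for denominator} and \ref{forms for circle}''; you have merely made explicit the quotient-of-a-direct-sum step and the stripping of the $F^n$ factor on the circle summand. Your extra care with the degree-one denominator $\pi\left(dJ_0^0(\varSigma^2\mathcal{A})\right)$ (not covered by Proposition \ref{forms for denominator}) goes beyond what the paper records; note only that the vanishing of $J_0^0$ there rests on the algebras containing no nonzero compact operators, which holds in the intended examples (classical triples and their iterated suspensions) but is not literally among Conditions (A) and (B).
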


\begin{proof}
Use Propositions ($\,$\ref{first main proposition}$\,$), ($\,$\ref{forms for denominator}$\,$) and ($\,$\ref{forms for circle}$\,$).
\end{proof}

Our next goal is to determine $\Omega_{\varSigma^2 D}^n\left(\mathcal{A}\otimes \mathcal{S}\right)$ in terms of $\Omega_D^n(\mathcal{A})$. Note that we are viewing $\mathcal{A}\otimes \mathcal{S}$ inside
the unital algebra $\varSigma^2 \mathcal{A}$ as an embedded subspace.

\begin{lemma}\label{n forms involving A,S}
$\pi\left(\Omega^n(\mathcal{A}\otimes \mathcal{S})\right) = \sum_{r=0}^n F^r\pi(\Omega^{n-r}(\mathcal{A}))\otimes \mathcal{S}\, $ for all $n\geq 0\,$.
\end{lemma}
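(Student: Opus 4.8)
The plan is to induct on $n$, leaning on two elementary facts. The first is that for any algebra $\mathcal{B}$ every $n$-form factors as an $(n-1)$-form times a single commutator and, conversely, the Leibniz rule rewrites any product $\pi(\Omega^{n-1}(\mathcal{B}))\cdot\pi(\Omega^1(\mathcal{B}))$ back inside $\pi(\Omega^n(\mathcal{B}))$; thus $\pi(\Omega^n(\mathcal{B}))=\pi(\Omega^{n-1}(\mathcal{B}))\cdot\pi(\Omega^1(\mathcal{B}))$ in $\mathcal{Q}$ (this is the decomposition already invoked in Lemma \ref{referring lemma 3}). The second is that in $\mathcal{Q}(\varSigma^2\mathcal{H})$ one has, for $a\in\mathcal{A}$ and $T\in\mathcal{S}$,
\[
[\varSigma^2 D,\, a\otimes T]\;=\;[D,a]\otimes T\;+\;(Fa)\otimes[N,T],
\]
the discrepancy $[F,a]\otimes TN$ from the genuine commutator being compact, since $TN$ has finite rank and $[F,a]$ is compact under our standing hypotheses (exactly as exploited in Proposition \ref{justification for assumption}). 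I will also use freely that $\pi(\Omega^k(\mathcal{A}))$ is an $\mathcal{A}$-bimodule, and the remark that the tensor product of a compact operator on $\mathcal{H}$ with a finite-rank operator on $\ell^2(\mathbb{N})$ is compact.

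First the base cases. For $n=0$ both sides are $\mathcal{A}\otimes\mathcal{S}$. For $n=1$ I claim $\pi(\Omega^1(\mathcal{A}\otimes\mathcal{S}))=\pi(\Omega^1(\mathcal{A}))\otimes\mathcal{S}+F\mathcal{A}\otimes\mathcal{S}$. Expanding $(a_0\otimes T_0)[\varSigma^2 D, a_1\otimes T_1]$ with the displayed formula, the first term lands in $\pi(\Omega^1(\mathcal{A}))\otimes\mathcal{S}$, and in the second term moving $F$ to the left of $a_0$ (the correction $[a_0,F]$ is compact, hence negligible after tensoring with the finite-rank $T_0[N,T_1]$) puts it in $F\mathcal{A}\otimes\mathcal{S}$; this is $\subseteq$. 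For $\supseteq$, the summand $F\mathcal{A}\otimes\mathcal{S}$ lies in $\pi(\Omega^1(\mathcal{A}\otimes\mathcal{S}))$ by Lemma \ref{referring lemma 2}, and a generator $b_0[D,b_1]\otimes S$ of $\pi(\Omega^1(\mathcal{A}))\otimes\mathcal{S}$ equals $(b_0\otimes S)[\varSigma^2 D, b_1\otimes P_m]$ modulo compacts, where $P_m=\sum_{i\leq m}e_{ii}$ is a diagonal projection chosen so that $SP_m=S$; diagonality of $P_m$ kills the $[N,\cdot]$-term.

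For the inductive step ($n\geq 2$) I assume the result for $n-1$ and substitute it, together with the $n=1$ description, into $\pi(\Omega^n(\mathcal{A}\otimes\mathcal{S}))=\pi(\Omega^{n-1}(\mathcal{A}\otimes\mathcal{S}))\cdot\pi(\Omega^1(\mathcal{A}\otimes\mathcal{S}))$. A generator then has one of the shapes $(F^r\omega\otimes S)(\eta\otimes S')$ or $(F^r\omega\otimes S)(Fa\otimes S')$ with $\omega\in\pi(\Omega^{n-1-r}(\mathcal{A}))$, $\eta\in\pi(\Omega^1(\mathcal{A}))$, $a\in\mathcal{A}$, $S,S'\in\mathcal{S}$, $0\leq r\leq n-1$. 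The former is $F^r(\omega\eta)\otimes SS'\in F^r\pi(\Omega^{n-r}(\mathcal{A}))\otimes\mathcal{S}$; the latter is $F^r\omega Fa\otimes SS'$, and pushing the inner $F$ left past $\omega$ (the correction $[\omega,F]$ is a sum of terms $[b_0,F]$ and $[[D,b_j],F]$, all compact under our hypotheses, so it disappears after tensoring with the finite-rank $SS'$) turns this into $F^{r+1}(\omega a)\otimes SS'\in F^{r+1}\pi(\Omega^{n-(r+1)}(\mathcal{A}))\otimes\mathcal{S}$. As $r$ ranges over $0,\dots,n-1$ these exhaust $\sum_{s=0}^{n}F^s\pi(\Omega^{n-s}(\mathcal{A}))\otimes\mathcal{S}$, giving $\subseteq$. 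For $\supseteq$, a generator $F^s(b_0[D,b_1]\cdots[D,b_{n-s}])\otimes S$ with $s\leq n-1$ factors, via $S=SP_m$, as a product of an element of $F^s\pi(\Omega^{(n-1)-s}(\mathcal{A}))\otimes\mathcal{S}\subseteq\pi(\Omega^{n-1}(\mathcal{A}\otimes\mathcal{S}))$ (inductive hypothesis) and an element of $\pi(\Omega^1(\mathcal{A}))\otimes\mathcal{S}\subseteq\pi(\Omega^1(\mathcal{A}\otimes\mathcal{S}))$ (the $n=1$ case), hence lies in $\pi(\Omega^n(\mathcal{A}\otimes\mathcal{S}))$; and $F^n\mathcal{A}\otimes\mathcal{S}=(F\otimes 1)^n(\mathcal{A}\otimes\mathcal{S})$ is contained in $\pi(\Omega^n(\mathcal{A}\otimes\mathcal{S}))$ by iterating Lemma \ref{referring lemma 2} and Lemma \ref{referring lemma 3}.

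The only delicate point, and the one I expect to be the main obstacle, is the bookkeeping of the sign operator $F$: the argument constantly commutes $F$ past elements of $\pi(\Omega^\bullet(\mathcal{A}))$ and of $\mathcal{A}$, and one must be sure every such correction vanishes in $\mathcal{Q}(\varSigma^2\mathcal{H})$. This is exactly where Condition $(A)$ (which makes $[[D,a],F]$ compact), the standing compactness of $[F,\mathcal{A}]$, and the observation ``compact $\otimes$ finite-rank $=$ compact'' are used, the last being what keeps a compact correction on $\mathcal{H}$ negligible after tensoring with an arbitrary element of $\mathcal{S}$. Condition $(B)$ plays no direct role in this particular lemma.
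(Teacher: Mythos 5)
Your proof is correct and rests on the same ingredients as the paper's: the mod-compact expansion $[\varSigma^2 D, a\otimes T]=[D,a]\otimes T+Fa\otimes[N,T]$ with $F$ commuted leftward via Condition $(A)$ and the compactness of $[F,\mathcal{A}]$, and the reconstruction of $[D,a]\otimes T$ and of the $F$-powers using diagonal block projections (your $P_m$ is the paper's $I_{(T)}$) together with the surjectivity of $\pi_N$ on $\mathcal{S}$ from Lemma \ref{finite support 1}. The only difference is organizational: you induct on $n$ through the factorization $\pi(\Omega^n)=\pi(\Omega^{n-1})\cdot\pi(\Omega^1)$ and route the $F^n\mathcal{A}\otimes\mathcal{S}$ summand through Lemmas \ref{referring lemma 2} and \ref{referring lemma 3}, whereas the paper exhibits each generator of $F^r\pi(\Omega^{n-r}(\mathcal{A}))\otimes\mathcal{S}$ directly as a single product of $n$ commutators, and you spell out the compactness bookkeeping that the paper's ``obvious'' inclusion leaves implicit.
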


\begin{proof}
The inclusion `$\subseteq$' is obvious since one just has to expand the commutators $[\varSigma^2 D,\, .\,]$ involved in LHS. For `$\supseteq$' we show that $F^r\pi\left(\Omega^{n-r}(\mathcal{A})\right)\otimes \mathcal{S}
\subseteq\pi\left(\Omega^n(\mathcal{A}\otimes \mathcal{S})\right)$, for each $0\leqslant r\leqslant n$. Consider first $F^ra_0\prod_{i=1}^{n-r}[D,a_i]\otimes T\in F^r\pi\left(\Omega^{n-r}(\mathcal{A})\right)\otimes
\mathcal{S}\,$, where $1\leqslant r\leqslant n-1$. By Lemma ($\,$\ref{finite support 1}$\,$), one can write $T=\sum_k T_{0k}\prod_{i=1}^r [N,T_{ik}]$. Let $I_{(0k)}$ be the infinite matrix having an identity block matrix in
top left most corner of order same as that of $T_{0k}$ and zero elsewhere. Then,
\begin{eqnarray*}
&  & \sum_k(a_0\otimes T_{0k})\left(\prod_{i=1}^{n-r}[\varSigma^2 D,a_i\otimes I_{(0k)}]\right) \left(\prod_{j=1}^r[\varSigma^2 D,1\otimes T_{jk}]\right)\\
&  & = \sum_k (a_0\otimes T_{0k})\prod_{i=1}^{n-r}\left([D,a_i]\otimes I_{(0k)}\right) \left(F^r\otimes \prod_{j=1}^{r}[N,T_{jk}]\right)\\
&  & = \sum_k F^r a_0\left(\prod_{i=1}^{n-r}[D,a_i]\right)\otimes T_{0k}\left(\prod_{j=1}^r[N,T_{jk}]\right)\\
&  & = F^r a_0\prod_{i=1}^{n-r}[D,a_i]\otimes T.
\end{eqnarray*}
For $r=0$, observe that $\,a_0\prod_{i=1}^{n}[D,a_i]\otimes T = (a_0\otimes T)\prod_{i=1}^{n}[\varSigma^2D,a_i\otimes I_{(T)}]\,$, where $\,I_{(T)}\,$ denotes the infinite matrix having an identity block matrix in top left
most corner of order same as that of $T$ and zero elsewhere. Finally for $r=n$, $\,F^na\otimes T =  \sum_k(a\otimes T_{0k})\left(\prod_{i=1}^n[\varSigma^2 D,1\otimes T_{ik}]\right)\,$ where $\,T=\sum_k T_{0k}\prod_{i=1}^n
[N,T_{ik}]\,$ (by Lemma ($\,$\ref{finite support 1}$\,$)).
\end{proof}

\begin{lemma}\label{1 forms involving A,S}
$\pi\left(dJ_0^1(\mathcal{A}\otimes\mathcal{S})\right) = \pi\left(dJ_0^1(\mathcal{A})\right)\otimes \mathcal{S} + F\pi\left(\Omega^1(\mathcal{A})\right)\otimes\mathcal{S}+ \mathcal{A}\otimes \mathcal{S}$.
\end{lemma}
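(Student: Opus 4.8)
The plan is to expand $\pi(d\omega)$ for a generic $\omega\in J_0^1(\mathcal A\otimes\mathcal S)$ (this yields ``$\subseteq$'') and then, for ``$\supseteq$'', to exhibit three explicit families of elements of $J_0^1(\mathcal A\otimes\mathcal S)$ whose images under $\pi\circ d$ sweep out the three summands. All computations take place in $\mathcal Q(\mathcal H\otimes\ell^2(\mathbb N))$; under Conditions $(A),(B)$ and the reduction of Remark $(\,\ref{essential remark}\,)$ one has, modulo $\mathcal K$, the congruences
\[ [\varSigma^2 D,\,a\otimes T]\equiv [D,a]\otimes T+Fa\otimes[N,T],\qquad F^2\equiv 1,\qquad [D,a]F\equiv F[D,a],\qquad aF\equiv Fa, \]
which I use freely; moreover $[\varSigma^2 D,\,1\otimes S]=F\otimes[N,S]$ exactly, and $[\varSigma^2 D,\,a\otimes I]\equiv[D,a]\otimes I\pmod{\mathcal K}$ when $I$ is a finite identity block.

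\smallskip\noindent\emph{The inclusion ``$\subseteq$''.} Write $\omega=\sum_k (a_{0k}\otimes T_{0k})\,d(a_{1k}\otimes T_{1k})$. Multiplying out $\pi(d\omega)=\sum_k[\varSigma^2 D,a_{0k}\otimes T_{0k}][\varSigma^2 D,a_{1k}\otimes T_{1k}]$ gives four pieces, carrying $\mathcal S$-tensor factors $T_{0k}T_{1k}$, $T_{0k}[N,T_{1k}]$, $[N,T_{0k}]T_{1k}$, $[N,T_{0k}][N,T_{1k}]$. Since $[D,a_{0k}]a_{1k}=[D,a_{0k}a_{1k}]-a_{0k}[D,a_{1k}]\in\pi(\Omega^1(\mathcal A))$, the second and third pieces lie in $F\pi(\Omega^1(\mathcal A))\otimes\mathcal S$ and the fourth in $\mathcal A\otimes\mathcal S$. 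For the first piece, fix a basis $(S_p)$ of the span of $\{T_{0k}T_{1k}\}\cup\{T_{0k}[N,T_{1k}]\}$ and write $T_{0k}T_{1k}=\sum_p\mu_{kp}S_p$; the hypothesis $\pi(\omega)=0$ reads $\sum_k a_{0k}[D,a_{1k}]\otimes T_{0k}T_{1k}+\sum_k Fa_{0k}a_{1k}\otimes T_{0k}[N,T_{1k}]\equiv 0$, and separating off the $F\mathcal A$-component — using $F\mathcal A\cap\mathcal A=\{0\}$ and Condition $(B)$, which keeps $\pi(\Omega^\bullet(\mathcal A))$ ``local'', hence meeting $F\mathcal A$ and $\mathcal K$ trivially — produces, for each $p$, the genuine identity $\sum_k\mu_{kp}\,a_{0k}[D,a_{1k}]=0$. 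Thus $\eta_p:=\sum_k\mu_{kp}\,a_{0k}\,da_{1k}\in J_0^1(\mathcal A)$, its $\pi d$-image is the $S_p$-coefficient of the first piece, and that piece lies in $\pi(dJ_0^1(\mathcal A))\otimes\mathcal S$.

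\smallskip\noindent\emph{The inclusion ``$\supseteq$''.} \underline{$\mathcal A\otimes\mathcal S$:} given $a\in\mathcal A$, $R\in\mathcal S$, Lemma $(\,\ref{finite support 2}\,)$ supplies $\sum_m S_{0m}\,dS_{1m}\in J_0^1(\mathcal S)$ with $\sum_m S_{0m}[N,S_{1m}]=0$ and $\sum_m[N,S_{0m}][N,S_{1m}]=R$; then $\zeta=\sum_m(a\otimes S_{0m})\,d(1\otimes S_{1m})$ has $\pi(\zeta)=aF\otimes\sum_m S_{0m}[N,S_{1m}]=0$ and $\pi(d\zeta)\equiv F[D,a]\otimes\sum_m S_{0m}[N,S_{1m}]+a\otimes\sum_m[N,S_{0m}][N,S_{1m}]=a\otimes R$. \underline{$\pi(dJ_0^1(\mathcal A))\otimes\mathcal S$:} given $\eta=\sum_k a_{0k}\,da_{1k}\in J_0^1(\mathcal A)$ and $R\in\mathcal S$ of order $\le l$, let $I$ be the identity block of order $l$, put $\omega_1=\sum_k(a_{0k}\otimes I)\,d(a_{1k}\otimes I)$ and $\zeta=\omega_1\,(1\otimes R)$; since $[N,I]=0$, $\pi(\omega_1)=\pi(\eta)\otimes I=0$ and $\pi(d\omega_1)\equiv\pi(d\eta)\otimes I$, so by the Leibniz rule $\pi(d\zeta)=\pi(d\omega_1)(1\otimes R)-\pi(\omega_1)(F\otimes[N,R])=\pi(d\eta)\otimes R$. \underline{$F\pi(\Omega^1(\mathcal A))\otimes\mathcal S$:} given $\pi(\xi)=\sum_k a_{0k}[D,a_{1k}]$ and $R\in\mathcal S$, first rewrite $\pi(\xi)=\sum_k a_{0k}[D,a_{1k}]+\sum_k(a_{0k}a_{1k})[D,(-1)]=\sum_\lambda c_{0\lambda}[D,c_{1\lambda}]$ with the new pairs also satisfying $\sum_\lambda c_{0\lambda}c_{1\lambda}=0$; next note every rank-one $v\,w^{*}\in\mathcal S$ equals $[N,P]\,Q$ with $P$ having $v$ in its first two columns and $Q$ having $w^{*},-w^{*}$ in its first two rows, so $PQ=0$, whence $R=\sum_\alpha[N,P_\alpha]Q_\alpha$ with $P_\alpha Q_\alpha=0$. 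Put $\zeta=\sum_{\lambda,\alpha}(c_{0\lambda}\otimes P_\alpha)\,d(c_{1\lambda}\otimes Q_\alpha)$; the relations $P_\alpha Q_\alpha=0$ kill all but one term of $\pi(\zeta)$, leaving $F\bigl(\sum_\lambda c_{0\lambda}c_{1\lambda}\bigr)\otimes(\,\cdot\,)=0$, and the same relations together with $\sum_\lambda c_{0\lambda}c_{1\lambda}=0$, the identity $\sum_\lambda\bigl([D,c_{0\lambda}]c_{1\lambda}-c_{0\lambda}[D,c_{1\lambda}]\bigr)=[D,\textstyle\sum_\lambda c_{0\lambda}c_{1\lambda}]-2\pi(\xi)=-2\pi(\xi)$ and $\sum_\alpha P_\alpha N Q_\alpha=-R$ collapse $\pi(d\zeta)$ to $2F\pi(\xi)\otimes R$; halving (a vector space operation) gives the claim.

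\smallskip
I expect the last case of ``$\supseteq$'' to be the crux: realizing the mixed summand $F\pi(\Omega^1(\mathcal A))\otimes\mathcal S$ inside $\pi(dJ_0^1(\mathcal A\otimes\mathcal S))$ forces one to couple the $\mathcal A$-variables with the $\mathcal S$-variables and relies on the two factorization devices above (multiplicatively trivial representatives for elements of $\Omega^1(\mathcal A)$, and the $[N,P]Q$-presentation of matrices with $PQ=0$), after which the commutator bookkeeping in $\pi(d\zeta)$ has to be carried out carefully. The other delicate step is the decoupling of the constraint $\pi(\omega)=0$ in ``$\subseteq$'', which is precisely where Condition $(B)$ is used.
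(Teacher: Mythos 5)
Your argument is correct and, for the containment $\subseteq$ and for the summand $\pi(dJ_0^1(\mathcal{A}))\otimes\mathcal{S}$, it runs along the same lines as the paper: you expand $\pi(d\omega)$, decouple the constraint $\pi(\omega)=0$ into its $\pi(\Omega^1(\mathcal{A}))$-part and its $F\mathcal{A}$-part using Condition $(B)$ together with $F\mathcal{A}\cap\mathcal{A}=\{0\}$ (the paper does exactly this, only with elementary matrices $e_{is}$ in place of your basis $(S_p)$), and you realize $\pi(dJ_0^1(\mathcal{A}))\otimes\mathcal{S}$ via identity blocks $I$ with $[N,I]=0$ and multiplication by $1\otimes R$ inside the ideal $J^\bullet$, which is the paper's construction up to replacing left by right multiplication. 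Where you genuinely diverge is in producing the other two summands of $\supseteq$. The paper solves the constraint equation by hand: it writes everything in matrix entries and exhibits explicit telescoping families of coefficients (four pairs for $a\otimes e_{is}$, three pairs giving $2Fa[D,b]\otimes e_{is}+ab(m-1)\otimes e_{is}$, after which the already-obtained $\mathcal{A}\otimes\mathcal{S}$ piece is subtracted). You instead use structural factorizations: for $\mathcal{A}\otimes\mathcal{S}$ you recycle Lemma (\ref{finite support 2}) to write $R=\sum_m[N,S_{0m}][N,S_{1m}]$ with $\sum_m S_{0m}[N,S_{1m}]=0$ and couple it with a single $a$; for $F\pi(\Omega^1(\mathcal{A}))\otimes\mathcal{S}$ you normalize the representative of $\pi(\xi)$ so that $\sum_\lambda c_{0\lambda}c_{1\lambda}=0$ (adding the pairs $(a_{0k}a_{1k},-1)$) and present $R$ as $\sum_\alpha[N,P_\alpha]Q_\alpha$ with $P_\alpha Q_\alpha=0$, after which the cross terms collapse to $2F\pi(\xi)\otimes R$. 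Your route is arguably cleaner and less ad hoc (no case-by-case coefficient hunting, and it reuses an existing lemma), at the cost of the two auxiliary devices; the paper's route is more elementary but the verifications are hidden in the explicit numerics. Two remarks on rigor, neither of which puts you below the paper's own standard: the congruences $aF\equiv Fa$ and $FaF\equiv a$ modulo compacts (i.e.\ $[F,a]\in\mathcal{K}$) are not literally among Conditions $(A),(B)$, but the paper's proofs of Proposition (\ref{justification for assumption}) and of this very lemma use them in the same way; and your decoupling sentence in $\subseteq$ (that $\pi(\Omega^1(\mathcal{A}))$ meets $F\mathcal{A}$ and $\mathcal{K}$ trivially, coefficientwise in the $\mathcal{S}$-leg) is asserted with the same brevity as the paper's closing appeal to equation (\ref{equation for J^1}); spelling it out would strengthen both proofs equally.
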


\begin{proof}
In terms of elementary matrices $(e_{ij})$ arbitrary element of $\pi\left(dJ_0^1(\mathcal{A}\otimes \mathcal{S})\right)$ looks like 
\begin{eqnarray*}
\sum [\varSigma^2 D,a_0\otimes T_0][\varSigma^2 D,a_1\otimes T_1] & =  & \sum \sum_{i,s} \textbf{\{}\sum_j [D,a_{0ij}][D,a_{1js}] + Fa_{0ij}[D,a_{1js}](i-j)\\
&   & \quad\quad+ a_{0ij}a_{1js}(i-j)(j-s) + F[D,a_{0ij}]a_{1js}(j-s)\textbf{\}}\otimes e_{is}
\end{eqnarray*}
such that for each $\,i$ and $s\,$ we have
\begin{eqnarray}\label{equation for J^1}
 \sum \sum_j a_{0ij}[D,a_{1js}]+\sum_j Fa_{0ij}a_{1js}(j-s) & = & 0\,.
\end{eqnarray}
Consider the following equations
\begin{eqnarray}\label{equations 1}
\xi & = & \sum \sum_j a_{0ij}[D,a_{1js}]\,,
\end{eqnarray}
\begin{eqnarray}\label{equations 2}
\,\,\,\,\,\,\eta & = & \sum \sum_j a_{0ij}a_{1js}(j-s)\,.
\end{eqnarray}
Hence, $\xi+F\eta=0$ by equation ($\,$\ref{equation for J^1}$\,$). Let $n$ be any natural number. For each $i$ and $s\,$, consider 
\begin{center}
$
\begin{cases}
 \begin{array}{lcl}
  a_{0,i,s+n} = -1\\
  a_{1,s+n,s} = a
 \end{array}
\end{cases}
\begin{cases}
 \begin{array}{lcl}
  a_{0,i,s+n+1} = 1\\
  a_{1,s+n+1,s} = a
 \end{array}
\end{cases}
\begin{cases}
 \begin{array}{lcl}
  a_{0,i,s+n+2} = 1\\
  a_{1,s+n+2,s} = a
 \end{array}
\end{cases}
\begin{cases}
 \begin{array}{lcl}
  a_{0,i,s+n+3} = 1\\
  a_{1,s+n+3,s} = -a\,.
 \end{array}
\end{cases}
$
\end{center}
One easily checks that $\,\xi=0$ in equation ($\,$\ref{equations 1}$\,$) and $\,\eta=0$ in equation ($\,$\ref{equations 2}$\,$) for these four pairs together and hence these pairs can produce infinitely many 
solutions to the equation $\,\xi+F\eta=0$. We can now conclude that arbitrary $a\otimes e_{is}$ lies in $\pi(dJ_0^1(\mathcal{A}\otimes \mathcal{S}))$, for each $i$ and $s$ and for any $a\in \mathcal{A}$.
We will now show that any $Fa[D,b]\otimes e_{is}$ lies in $\pi\left(dJ_0^1(\mathcal{A}\otimes \mathcal{S})\right)$, for each $i$ and $s$. For any natural number $m$ and for each $i$ and $s\,$, consider
\begin{center}
$
\begin{cases}
 \begin{array}{lcl}
  a_{0,i,s+m} = a\\
  a_{1,s+m,s} = b
 \end{array}
\end{cases}
\begin{cases}
 \begin{array}{lcl}
  a_{0,i,s+m+1} = -a\\
  a_{1,s+m+1,s} = b
 \end{array}
\end{cases}
\begin{cases}
 \begin{array}{lcl}
  a_{0,i,s+m+2} = (1/(m+2))ab\\
  a_{1,s+m+2,s} = 1\,.
 \end{array}
\end{cases}
$
\end{center}
Again one checks that $\,\xi=0$ in equations ($\,$\ref{equations 1}$\,$) and $\,\eta=0$ in equation ($\,$\ref{equations 2}$\,$) and one gets $\,2Fa[D,b]\otimes e_{is} + ab(m-1)\otimes e_{is}$ as an element of
$\pi\left(dJ_0^1(\mathcal{A}\otimes \mathcal{S})\right)$. Hence $\,Fa[D,b]\otimes e_{is}$ lies in $\pi\left(dJ_0^1(\mathcal{A}\otimes \mathcal{S})\right)$, for each $i$ and $s$. We will now show that $\pi\left(dJ_0^1
(\mathcal{A})\right)\otimes\mathcal{S}\subseteq \pi\left(dJ_0^1(\mathcal{A}\otimes \mathcal{S})\right)$. Choose any $\,\sum_k [D,a_{0k}][D,a_{1k}]\otimes T_k\in \pi\left(dJ_0^1(\mathcal{A})\right)\otimes \mathcal{S}$. Then
$a_{0k}[D,a_{1k}]=0$ for each $k$. Let $I_{(k)}$ be the infinite matrix having an identity block matrix in top left most corner of order same as that of $T_{k}$ and zero elsewhere. Then $\,[\varSigma^2D,a_{0k}\otimes
I_{(k)}][\varSigma^2D,a_{1k}\otimes I_{(k)}]\in \pi\left(dJ_0^1(\mathcal{A}\otimes \mathcal{S})\right)$ for each $k$ and hence, $\,(1\otimes T_k)[\varSigma^2D,a_{0k}\otimes I_{(k)}][\varSigma^2D,a_{1k}\otimes I_{(k)}]\in
\pi\left(dJ_0^1(\mathcal{A}\otimes \mathcal{S})\right)$ for each $k$. Now observe that $$\sum_k [D,a_{0k}][D,a_{1k}]\otimes T_k=\sum_k (1\otimes T_k)[\varSigma^2D,a_{0k}\otimes I_{(k)}][\varSigma^2D,a_{1k}\otimes I_{(k)}]
\,.$$ This proves the inclusion `$\supseteq$'. Now Lemma ($\,$\ref{n forms involving A,S}$\,$) shows that $\pi\left(dJ_0^1(\mathcal{A}\otimes \mathcal{S})\right)\subseteq\pi\left(\Omega^2(\mathcal{A})\right)\otimes \mathcal{S}
+ F\pi\left(\Omega^1(\mathcal{A})\right)\otimes\mathcal{S}+ \mathcal{A}\otimes \mathcal{S}$. Finally, the fact that $[D,\mathcal{A}]\subseteq\mathcal{A}\otimes\mathcal{E}nd_{\mathcal{A}}(\mathcal{H}^\infty)\subseteq
\mathcal{E}nd_{\mathbb{C}}(\mathcal{H}^\infty)$ and $F\mathcal{A}\cap\mathcal{A}=\{0\}$ implies the inclusion `$\subseteq$' by equation ($\,$\ref{equation for J^1}$\,$).
\end{proof}

\begin{lemma}\label{general element}
For all $n\geq 1$, $F^{n+1} a\otimes e_{ij} \in \pi\left(dJ_0^n(\mathcal{A}\otimes \mathcal{S})\right)$ for any $a\in \mathcal{A}$ and each $i$ and $j$.
\end{lemma}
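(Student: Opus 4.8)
The plan is to prove the statement by induction on $n$, using Lemma (\ref{1 forms involving A,S}) as the base case in disguise and building up higher-degree elements by multiplying known elements of $\pi(dJ_0^{k}(\mathcal{A}\otimes\mathcal{S}))$ on the left by $(F\otimes 1)$-type forms. The key structural fact I would invoke repeatedly is that $dJ_0^\bullet$ is not just a graded subspace but behaves well under multiplication: if $\zeta\in J_0^k(\mathcal{A}\otimes\mathcal{S})$ and $\omega$ is any form, then the product rule for $d$ gives $d(\omega\zeta)=(d\omega)\zeta+(-1)^{\deg\omega}\omega\,d\zeta$, and when $\pi(\omega\zeta)$ vanishes appropriately one can move elements of $\pi(dJ_0^{k})$ up to $\pi(dJ_0^{k+1})$ after left multiplication by suitable one-forms whose $\pi$-image is a multiple of $F\otimes 1$.

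Concretely, first I would establish the case $n=1$: from Lemma (\ref{1 forms involving A,S}) we already know $\mathcal{A}\otimes\mathcal{S}\subseteq\pi(dJ_0^1(\mathcal{A}\otimes\mathcal{S}))$ and $F\pi(\Omega^1(\mathcal{A}))\otimes\mathcal{S}\subseteq\pi(dJ_0^1(\mathcal{A}\otimes\mathcal{S}))$, but I need the element $F^2 a\otimes e_{ij}$, which is one power of $F$ higher. The trick, mirroring the computation in Lemma (\ref{1 forms involving A,S}), is to choose scalars $a_{0,i,s+m},a_{1,s+m,s}$ supported on a few shifted matrix positions so that the ``obstruction'' equations $\xi=0$ and $\eta=0$ (equations (\ref{equations 1}) and (\ref{equations 2})) hold, while $\pi(d\zeta)$ produces $F^2 a\otimes e_{ij}$ plus lower-order $F$-terms that we already know lie in $\pi(dJ_0^1(\mathcal{A}\otimes\mathcal{S}))$; subtracting these off isolates $F^2 a\otimes e_{ij}$. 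Using $F^2=1$ here would be illegitimate (the whole point of Condition (A) and $F\mathcal{A}\cap\mathcal{A}=\{0\}$ is that $F$ is genuinely not $\pm 1$ modulo the relevant identifications), so one keeps $F^2$ formal.

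For the inductive step, assuming $F^{k+1}a\otimes e_{ij}\in\pi(dJ_0^k(\mathcal{A}\otimes\mathcal{S}))$ for all $a,i,j$, I would take the representing element $\zeta\in J_0^k(\mathcal{A}\otimes\mathcal{S})$ with $\pi(d\zeta)=F^{k+1}a\otimes e_{ij}$ and left-multiply by the one-form $(1\otimes e_{i,i})\,d(1\otimes l^*)$ — or a similar elementary-matrix-times-shift one-form — whose $\pi$-image is a multiple of $(F\otimes 1)$ times an elementary matrix, exactly as in the computation at the start of Lemma (\ref{referring lemma 0}) where $(F\otimes 1)=(1\otimes l)[\varSigma^2 D,1\otimes l^*]$. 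One checks that this product still lies in $J_0^{k+1}$ (its $\pi^{k+1}$-image vanishes because $\pi^k(\zeta)=0$ forces the product of commutators to vanish), and that $\pi$ of its differential equals $(F\otimes 1)$ times $\pi(d\zeta)=F^{k+1}a\otimes e_{ij}$ up to a correction term of the form $F^{j}(\text{lower})\otimes\mathcal{S}$ with $j\le k$, which lies in $\pi(dJ_0^{k+1}(\mathcal{A}\otimes\mathcal{S}))$ by the inductive hypothesis together with Lemma (\ref{referring lemma 3}). Collecting $F\cdot F^{k+1}=F^{k+2}$ and subtracting the correction gives $F^{k+2}a\otimes e_{ij}\in\pi(dJ_0^{k+1}(\mathcal{A}\otimes\mathcal{S}))$, closing the induction.

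The main obstacle I anticipate is bookkeeping the correction terms: when one differentiates a product $\omega\zeta$ and applies $\pi$, the Leibniz rule produces $(d\omega)\zeta$ and $\pm\omega\,d\zeta$, and the first of these need not vanish under $\pi$ even though $\pi^{k}(\zeta)=0$ guarantees the second term is ``clean''; one must verify that $\pi((d\omega)\zeta)$ lands in a subspace already controlled — i.e. in $\sum_{j\le k}F^{j}\pi(\Omega^{k+1-j}(\mathcal{A}))\otimes\mathcal{S}$ by Lemma (\ref{n forms involving A,S}) — and that all of those summands, for $j\le k$, have already been shown to lie in $\pi(dJ_0^{k+1}(\mathcal{A}\otimes\mathcal{S}))$ at the previous stage. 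Thus the induction is really a simultaneous induction on the full statement ``$F^r\pi(\Omega^{n+1-r}(\mathcal{A}))\otimes\mathcal{S}\subseteq\pi(dJ_0^n(\mathcal{A}\otimes\mathcal{S}))$ for all $r\le n+1$'', of which the displayed lemma is the top ($r=n+1$, trivial coefficient) case, and the delicate point is ordering the cases so each correction term is already available when needed.
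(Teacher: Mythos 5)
Your route is genuinely different from the paper's. The paper proves the lemma for each $n\geq 2$ by a direct construction: it writes a general element of $\pi\left(dJ_0^n(\mathcal{A}\otimes\mathcal{S})\right)$ in elementary-matrix form and chooses two families of entries, shifted by one, with $a$ and $-a$ in the first slot and explicit scalars in the remaining slots, so that the constraint defining $J_0^n$ holds while every term of the image under $\pi\circ d$ containing a commutator of $D$ with a scalar dies and the $t=n$ term cancels between the two families, leaving exactly $F^{n+1}a\otimes e_{ij}$; the $n=1$ case is quoted from Lemma (\ref{1 forms involving A,S}). Your induction --- left-multiplying a representative $\zeta\in J_0^k(\mathcal{A}\otimes\mathcal{S})$ with $\pi(d\zeta)=F^{k+1}a\otimes e_{ij}$ by a one-form $\omega$ whose $\pi$-image is $F\otimes e_{ii}$ --- is a viable alternative, and in fact simpler than you make it: since $\pi$ is an algebra homomorphism and $\pi(\zeta)=0$, one has $\pi\left((d\omega)\zeta\right)=\pi(d\omega)\pi(\zeta)=0$, while $\omega\zeta\in J_0^{k+1}$ for the same reason, so $\pi\left(d(\omega\zeta)\right)=-\pi(\omega)\pi(d\zeta)=-F^{k+2}a\otimes e_{ij}$ with no correction terms at all. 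You have the Leibniz bookkeeping backwards (it is the $\omega\,d\zeta$ term that survives and the $(d\omega)\zeta$ term that is killed), and consequently the simultaneous induction you build around the corrections is unnecessary; worse, as stated with $r$ allowed to be $0$ it is false, since $\pi(\Omega^{n+1}(\mathcal{A}))\otimes\mathcal{S}\subseteq\pi\left(dJ_0^n(\mathcal{A}\otimes\mathcal{S})\right)$ would force $\Omega^{n+1}_{\varSigma^2D}(\varSigma^2\mathcal{A})=0$, contradicting Theorem (\ref{final thm}) whenever $\Omega_D^{n+1}(\mathcal{A})\neq0$. The correct auxiliary statement, with the $F$-exponent at least one, is precisely Lemma (\ref{to refer in thm}), which the paper deduces from the present lemma, not the other way around.

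Two further points need repair before your argument closes. First, $(1\otimes e_{i,i})\,d(1\otimes l^*)$ is not an element of $\Omega^1(\mathcal{A}\otimes\mathcal{S})$, because $1\otimes l^*\notin\mathcal{A}\otimes\mathcal{S}$ and the lemma concerns forms over the nonunital algebra $\mathcal{A}\otimes\mathcal{S}$ only; take instead, say, $\omega=(1\otimes e_{i,i+1})\,d(1\otimes e_{i+1,i})$, for which $\pi(\omega)=(1\otimes e_{i,i+1})(F\otimes e_{i+1,i})=F\otimes e_{ii}$ --- this is exactly the mechanism of Lemma (\ref{referring lemma 2}). Second, your refusal to use $F^2=1$ rests on a misconception: $F=sign(D)$ satisfies $F^2=1$ modulo compacts, and all of these computations take place in $\mathcal{Q}(\mathcal{H})$; the hypothesis $F\mathcal{A}\cap\mathcal{A}=\{0\}$ says that $F$ does not lie in $\mathcal{A}$, not that $F^2\neq1$. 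The paper's disposal of the $n=1$ case via Lemma (\ref{1 forms involving A,S}) uses exactly the identification of $F^2a\otimes e_{ij}$ with $a\otimes e_{ij}\in\mathcal{A}\otimes\mathcal{S}$, and your substitute base-case computation, carried out while forbidding such identifications, naturally produces $FaF\otimes e_{ij}$ rather than $F^2a\otimes e_{ij}$, so under your self-imposed prohibition the base case does not close. With the base case taken from Lemma (\ref{1 forms involving A,S}) as in the paper and the inductive step streamlined as above, your proof is correct and arguably cleaner than the paper's explicit construction, at the cost of relying on the multiplicative structure of $\Omega^\bullet$ and on Lemma (\ref{referring lemma 2}) rather than being self-contained.
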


\begin{proof}
The $n=1$ case has been addressed in Lemma ($\,$\ref{1 forms involving A,S}$\,$). Let's take $n\geq 2$. Arbitrary element of $\pi\left(dJ_0^n(\mathcal{A}\otimes \mathcal{S})\right)$ looks like
\begin{eqnarray}\label{Arbitrary element}
\sum \sum_{i_1,i_{n+2}}\left( \sum_{i_2,\ldots,i_{n+1}} \prod_{j=1}^{n+1}[D,a_{ji_ji_{j+1}}] + \sum_{t=1}^{n+1} F^t(\prod_{j=1}^{n+1}[D,a_{ji_ji_{j+1}}])^{(t)} \right)\otimes e_{i_1i_{n+2}}\,,
\end{eqnarray}
where $\left(\prod_{j=1}^{n+1}[D,a_{ji_ji_{j+1}}]\right)^{(t)}$ is the following expression
$$\sum_{1\leq r_1<r_2<\ldots<r_t}^{n+1} [D,a_{1i_1i_2}]\ldots\hat{[D,a_{r_1i_{r_1}i_{r_1+1}}]}\ldots \hat{[D,a_{r_2i_{r_2}i_{r_2+1}}]}\ldots \hat{[D,a_{r_ti_{r_t}i_{r_t+1}}]}\ldots [D,a_{(n+1)i_{n+1}i_{n+2}}]$$
with $\, \hat{[D,a_{ri_ri_{r+1}}]} = (i_r-i_{r+1})a_{ri_ri_{r+1}}\, $ (total number of $\, \verb!^!\, $ appears in each summand of the summation $\sum_{1\leq r_1<r_2<\ldots<r_t}^n$ is exactly $\, t\, $)~; such that
\begin{eqnarray*}
&  & \sum \sum_{i_2,\ldots,i_{n+1}}\textbf{\{}\,a_{1i_1i_2}\prod_{j=2}^{n+1}[D,a_{ji_ji_{j+1}}] + \sum_{t=2,\, t\, even}^{n} a_{1i_1i_2}\left(\prod_{j=2}^{n+1}[D,a_{ji_ji_{j+1}}]\right)^{(t)}\\
&  & \quad\quad\quad\quad\quad +\sum_{t=3,\, t\, odd}^{n} Fa_{1i_1i_2}\left(\prod_{j=2}^{n+1}[D,a_{ji_ji_{j+1}}]\right)^{(t)}\textbf{\}} = 0
\end{eqnarray*}
for each $i_1$ and $i_{n+2}\, $. Here $\left(\prod_{j=2}^{n+1}[D,a_{ji_ji_{j+1}}]\right)^{(t)}$ is the same expression as $\left(\prod_{j=1}^{n+1}[D,a_{ji_ji_{j+1}}]\right)^{(t)}$ except for the fact that there is no $r_1$
present i,e. the summation will be over $r_2,\ldots,r_t$ and for $t=1$ this term is zero. Consider
\begin{eqnarray}\label{equations 3}
\xi & = & \sum \sum_{i_2,\ldots,i_{n+1}} a_{1i_1i_2} \prod_{j=2}^{n+1}[D,a_{ji_ji_{j+1}}]\,,
\end{eqnarray}
\begin{eqnarray}\label{equations 4}
\quad\quad\quad\eta & = & \sum \sum_{i_2,\ldots,i_{n+1}}\textbf{\{}\sum_{t=2,\, t\, even}^{n} a_{1i_1i_2}(\prod_{j=2}^{n+1}[D,a_{ji_ji_{j+1}}])^{(t)}+\sum_{t=3,\, t\, odd}^{n} Fa_{1i_1i_2}(\prod_{j=2}^{n+1}[D,a_{ji_ji_{j+1}}])^{(t)}\}\,.
\end{eqnarray}
Hence, $\,\xi+\eta=0$. For each $i_1$ and $i_{n+2}\, $, consider
\begin{center}
$
\begin{cases}
 \begin{array}{lcl}
  a_{1,i_1,i_1+m} = a\\
  a_{s,i_1+(s-1)m,i_1+sm} = -1/m\, \, ;\, \, \forall\, \, 2\leq s\leq n\\
  a_{n+1,i_1+nm,i_{n+2}} = 1/(i_1+nm-i_{n+2})
 \end{array}
\end{cases}
$
\end{center}
and
\begin{center}
$
\begin{cases}
 \begin{array}{lcl}
  a_{1,i_1,i_1+m+1} = -a\\
  a_{s,i_1+(s-1)m+1,i_1+sm+1} = -1/m\, \, ;\, \, \forall\, \, 2\leq s\leq n\\
  a_{n+1,i_1+nm+1,i_{n+2}} = 1/(i_1+nm+1-i_{n+2})
 \end{array}
\end{cases}
$
\end{center}
Here $\,m\,$ is a natural number s.t. $\, i_1+nm+1-i_{n+2}\, $ and $\, i_1+nm-i_{n+2}\, $ both are nonzero. Note that infinitely many such $\,m\,$ can be found for given $i_1,i_{n+2},n$. The term 
$\, \sum_{i_2,\ldots,i_{n+1}} a_{1i_1i_2}\prod_{j=2}^{n+1} (a_{ji_ji_{j+1}}(i_j-i_{j+1}))\, $ becomes zero for above choice and these pairs satisfy $\,\xi=0\,$ in equation ($\,$\ref{equations 3}$\,$) and $\,\eta=0\,$ in equation
($\,$\ref{equations 4}$\,$). Existence of infinitely many natural numbers $\,m\,$ gives us infinitely many solutions to the equation $\,\xi+\eta=0$. The only surviving term in the expression ($\,$\ref{Arbitrary element}$\,$)
for these solution is the term $\sum_{i_2,\ldots,i_{n+1}} F^{n+1} \left(\prod_{k=1}^{n+1} a_{ki_ki_{k+1}}\right) \prod_{j=1}^{n+1} (i_j-i_{j+1})$ (when $t=n+1$), which is equal to $F^{n+1} a\,$ for each $i_1$ and $i_{n+2}$.
All the other terms become zero because of the existence of commutators (except for $t=n$, which also vanishes for our choice $\,a_{1,i_1,i_1+m} = a,\,a_{1,i_1,i_1+m+1} = -a$). This justifies our claim.
\end{proof}
\medskip

\begin{lemma}\label{to refer in thm}
$\sum_{j=0}^n F^{n+1-j}\pi(\Omega^j(\mathcal{A}))\otimes \mathcal{S} \subseteq \pi\left(dJ_0^n(\mathcal{A}\otimes \mathcal{S})\right)$, for all $n\geq 1$.
\end{lemma}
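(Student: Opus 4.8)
The goal is to establish the inclusion
\[
\sum_{j=0}^n F^{n+1-j}\pi(\Omega^j(\mathcal{A}))\otimes \mathcal{S} \subseteq \pi\left(dJ_0^n(\mathcal{A}\otimes \mathcal{S})\right), \qquad n\geq 1.
\]
The plan is to prove it term by term: for each $0\leq j\leq n$ it suffices to show that every generator $F^{n+1-j}a_0[D,a_1]\cdots[D,a_j]\otimes e_{pq}$ lies in $\pi(dJ_0^n(\mathcal{A}\otimes\mathcal{S}))$, since such generators span $F^{n+1-j}\pi(\Omega^j(\mathcal{A}))\otimes\mathcal{S}$ (using Lemma \ref{finite support 1} to reduce an arbitrary $T\in\mathcal{S}$ to the elementary matrices $e_{pq}$), and $\pi(dJ_0^n(\mathcal{A}\otimes\mathcal{S}))$ is closed under left and right multiplication by $\pi(\Omega^\bullet(\mathcal{A}\otimes\mathcal{S}))$.

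The two extreme cases are already at hand. The case $j=0$, i.e.\ $F^{n+1}a\otimes e_{pq}\in\pi(dJ_0^n(\mathcal{A}\otimes\mathcal{S}))$, is exactly Lemma \ref{general element}. The case $j=n$ requires $Fa_0[D,a_1]\cdots[D,a_n]\otimes e_{pq}\in\pi(dJ_0^n(\mathcal{A}\otimes\mathcal{S}))$; for this I would take the element $(a_0\otimes e_{p,p})\,[\varSigma^2D, a_1\otimes I_{(k)}]\cdots[\varSigma^2D,a_n\otimes I_{(k)}]\,[\varSigma^2D, 1\otimes e_{p,q}]$ (with blocks large enough), which after expanding commutators has leading term $F\,a_0[D,a_1]\cdots[D,a_n]\otimes e_{p,q}$ together with lower-$F$-order corrections; those corrections lie in $\sum_{r<n}F^r\pi(\Omega^{n-r}(\mathcal{A}))\otimes\mathcal{S}$, which by an inductive use of the statement for smaller $n$ (or directly by Lemma \ref{1 forms involving A,S} and Lemma \ref{general element}) is already inside $\pi(dJ_0^n(\mathcal{A}\otimes\mathcal{S}))$, and one then verifies the chosen element lies in $J_0^n$ by a rank/support count. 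The intermediate cases $0<j<n$ are handled by the same device: build an element of $\Omega^{n}(\mathcal{A}\otimes\mathcal{S})$ whose image under $\pi$ has the desired term $F^{n+1-j}a_0[D,a_1]\cdots[D,a_j]\otimes e_{pq}$ as its top-degree-in-$F$ part, by inserting $n-j$ factors of the form $[\varSigma^2D,1\otimes e_{\cdot,\cdot}]$ (each contributing a factor of $F$ up to a scalar, by the computation $[\varSigma^2D,1\otimes e_{ab}]=F\otimes(a-b)e_{ab}$) interspersed among the $[\varSigma^2D,a_i\otimes I_{(k)}]$, arranged so that the scalar prefactor $\prod(i_k-i_{k+1})$ matches Lemma \ref{general element}'s normalization; all lower-$F$-order terms are absorbed into $\pi(dJ_0^n(\mathcal{A}\otimes\mathcal{S}))$ by downward induction on $n+1-j$.

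The main obstacle is the bookkeeping needed to guarantee that the chosen element actually sits in $J_0^n(\mathcal{A}\otimes\mathcal{S})$ (i.e.\ that $\pi$ of the underlying $(n-1)$-form with one extra $d$ stripped off vanishes), not merely that some linear combination of its $\pi$-image terms is the one we want. This is precisely the kind of delicate cancellation that Lemmas \ref{1 forms involving A,S} and \ref{general element} already engineered for $j=0,1$ via explicit four- or three-term families of matrix entries forcing $\xi=0$ and $\eta=0$; for general $j$ one needs an analogous family, now carrying the operator data $a_0,\dots,a_j$ in the slots where the product of $[D,\cdot]$'s is nonzero while keeping the ``scalar'' slots $1\otimes e_{\cdot\cdot}$ arranged to kill the intermediate $t$-terms. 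Once this combinatorial construction is in place, the rest — expanding commutators, using $\mathrm{sign}(\varSigma^2D)=F\otimes 1$, invoking $F\mathcal{A}\cap\mathcal{A}=\{0\}$ and Condition $(B)$ to separate the $F$-graded pieces, and appealing to the closure of $\pi(dJ_0^n(\mathcal{A}\otimes\mathcal{S}))$ under multiplication — is routine.
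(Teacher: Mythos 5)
There is a genuine gap, and it sits exactly where you flag it yourself. For $j=0$ your appeal to Lemma (\ref{general element}) is fine, but for $1\leq j\leq n$ the element you write down, $(a_0\otimes e_{p,p})\,[\varSigma^2D,a_1\otimes I_{(k)}]\cdots[\varSigma^2D,a_n\otimes I_{(k)}]\,[\varSigma^2D,1\otimes e_{p,q}]$, is merely an element of $\pi\left(\Omega^{n+1}(\mathcal{A}\otimes\mathcal{S})\right)$ that happens to equal $(p-q)Fa_0[D,a_1]\cdots[D,a_n]\otimes e_{p,q}$ (note: since $[N,I_{(k)}]=0$ and $[\varSigma^2D,1\otimes e_{ab}]=F\otimes(a-b)e_{ab}$, there are in fact no ``lower-$F$-order corrections'' to absorb). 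It has the shape $y_0\prod_{m=1}^{n+1}[\varSigma^2D,y_m]$, not $\sum\prod_{m=0}^{n}[\varSigma^2D,z_m]$ with $\sum z_0\prod_{m=1}^{n}[\varSigma^2D,z_m]=0$, so nothing exhibits it as $\pi(d\omega)$ for some $\omega\in J_0^n(\mathcal{A}\otimes\mathcal{S})$. That exhibition is the entire content of the lemma; deferring it to an unspecified ``analogous family'' of matrix entries generalizing the explicit cancellations of Lemmas (\ref{1 forms involving A,S}) and (\ref{general element}) leaves the proof unfinished. A secondary slip: the corrections you propose to absorb would live in $\sum_r F^{n+1-r}\pi(\Omega^r(\mathcal{A}))\otimes\mathcal{S}$ inside $\pi\left(dJ_0^{n}(\mathcal{A}\otimes\mathcal{S})\right)$, which is the statement for $n$ itself, not for smaller $n$; the induction as you phrase it is circular.

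The paper closes this gap without any new explicit construction, by an induction on $n$ that exploits the fact that $J^\bullet$ is a two-sided \emph{differential} ideal. Since $\pi(\Omega^j(\mathcal{A}))=\pi(\Omega^{j-1}(\mathcal{A}))\cdot[D,\mathcal{A}]$ and $F$ commutes with everything modulo compacts, one has $F^{n+1-j}\pi(\Omega^j(\mathcal{A}))\otimes e_{pq}\subseteq\left(F^{n-(j-1)}\pi(\Omega^{j-1}(\mathcal{A}))\otimes e_{pq}\right)\cdot\left([D,\mathcal{A}]\otimes e_{qq}\right)$; the left factor lies in $\pi\left(dJ_0^{n-1}(\mathcal{A}\otimes\mathcal{S})\right)$ by the induction hypothesis, and if $\omega\in J_0^{n-1}$ then $\omega\, d(a\otimes e_{qq})\in J_0^{n}$ with $d\left(\omega\, d(a\otimes e_{qq})\right)=(d\omega)\,d(a\otimes e_{qq})$, so right multiplication by $[\varSigma^2D,a\otimes e_{qq}]=[D,a]\otimes e_{qq}$ carries $\pi\left(dJ_0^{n-1}\right)$ into $\pi\left(dJ_0^{n}\right)$. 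This is the ``closure under multiplication'' principle you invoke, but used to \emph{generate} the terms rather than only to reduce $T\in\mathcal{S}$ to $e_{pq}$; if you reroute your argument through it, the only hard constructions needed are the ones already done in Lemmas (\ref{1 forms involving A,S}) and (\ref{general element}).
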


\begin{proof}
We use induction. We have seen that $F\pi(\Omega^1(\mathcal{A}))\otimes \mathcal{S} + \mathcal{A}\otimes \mathcal{S} \subseteq \pi(dJ_0^1(\mathcal{A}\otimes \mathcal{S}))$ (Lemma ($\,$\ref{1 forms involving A,S}$\,$)).
Assume this is true at the $k$-th stage. To prove for the $k+1$-th stage we use Lemma ($\,$\ref{general element}$\,$). Let $\xi=\sum_{j=0}^{k+1}F^{k+2-j}\xi_{j}\in \sum_{j=0}^{k+1} F^{k+2-j}\pi(\Omega^j(\mathcal{A}))\otimes
\mathcal{S}$. Lemma ($\,$\ref{general element}$\,$) shows that $F^{k+2}\xi_{0}\in \pi\left(dJ_0^{k+1}(\mathcal{A}\otimes \mathcal{S})\right)$. To prove $\,\xi-F^{k+2}\xi_{0}\in\pi\left(dJ_0^{k+1}(\mathcal{A}\otimes \mathcal{S}
)\right)$, it is enough to show that for each $i$ and $j\, $, if $\pi(\omega)\otimes e_{ij} \in \pi(dJ_0^k(\mathcal{A}\otimes \mathcal{S}))\,$ then $\,\pi(\omega)[D,a]\otimes e_{ij} \in \pi(dJ_0^{k+1}(\mathcal{A}\otimes
\mathcal{S}))$ for any $a \in \mathcal{A}\, $. Let
\begin{eqnarray*}
\pi(\omega)\otimes e_{ij} & = & \sum \prod_{m=0}^k[\varSigma^2 D,x_m]
\end{eqnarray*}
such that
\begin{eqnarray}\label{Equation}
\sum x_0\prod_{m=1}^k[\varSigma^2 D,x_m] & = & 0.
\end{eqnarray}
for all $x_m \in \mathcal{A}\otimes \mathcal{S}\, $. Now 
\begin{eqnarray*}
(\pi(\omega)\otimes e_{ij})([D,a]\otimes e_{jj}) & = & \left(\sum \prod_{m=0}^k[\varSigma^2 D,x_m]\right)([D,a]\otimes e_{jj})\\
                                                 & = & \sum \prod_{m=0}^k[\varSigma^2 D,x_m][\varSigma^2 D,a\otimes e_{jj}]\,,
\end{eqnarray*}
since $[D,a]\otimes e_{jj} = [\varSigma^2 D,a\otimes e_{jj}]$. If $\pi(\omega)[D,a]\otimes e_{ij}$ has to be in $\pi(dJ_0^{k+1}(\mathcal{A}\otimes \mathcal{S}))$ then
\begin{eqnarray*}
\sum x_0\prod_{m=1}^k[\varSigma^2 D,x_m][\varSigma^2 D,a\otimes e_{jj}] & = & 0
\end{eqnarray*}
should hold. But this is clear from equation ($\,$\ref{Equation}$\,$).
\end{proof}

\begin{lemma}\label{containment of top degree}
For all $n\geq 1$, $\,\pi\left(dJ_0^n(\mathcal{A})\right)\otimes \mathcal{S}\subseteq \pi\left(dJ_0^n(\mathcal{A}\otimes \mathcal{S})\right)\,$.
\end{lemma}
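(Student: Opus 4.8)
The plan is to reduce everything to a single explicit lift. First I would reduce, by bilinearity, to showing that $\pi\bigl(d\omega\bigr)\otimes e_{ij}\in\pi\bigl(dJ_0^n(\mathcal{A}\otimes\mathcal{S})\bigr)$ for every $\omega\in J_0^n(\mathcal{A})$ and every pair of indices $i,j$: indeed, an arbitrary element of $\pi\bigl(dJ_0^n(\mathcal{A})\bigr)\otimes\mathcal{S}$ is a finite linear combination of such elements, since each element of $\mathcal{S}$ is a finite combination of elementary matrices. I would also record the elementary fact that $\pi\bigl(dJ_0^n(\mathcal{B})\bigr)$ absorbs multiplication on either side by degree-zero elements of any algebra $\mathcal{B}$: if $\pi(\eta)=0$ and $b\in\mathcal{B}$, then $b\eta,\eta b\in J_0^n(\mathcal{B})$, and the Leibniz rule gives $\pi(b)\pi(d\eta)=\pi\bigl(d(b\eta)\bigr)$ and $\pi(d\eta)\pi(b)=\pi\bigl(d(\eta b)\bigr)$, both of which lie in $\pi\bigl(dJ_0^n(\mathcal{B})\bigr)$. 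Because of this, and because $\pi(d\omega)\otimes e_{ij}=(1\otimes e_{i1})\bigl(\pi(d\omega)\otimes u\bigr)(1\otimes e_{1j})$ with $1\otimes e_{i1},\,1\otimes e_{1j}\in\mathcal{A}\otimes\mathcal{S}$, it is enough to produce $\pi(d\omega)\otimes u$ inside $\pi\bigl(dJ_0^n(\mathcal{A}\otimes\mathcal{S})\bigr)$, where $u=e_{11}=|e_0\rangle\langle e_0|$.

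For the key step I would write $\omega=\sum_l a_{0l}\,da_{1l}\cdots da_{nl}\in J_0^n(\mathcal{A})$, so that $\pi(\omega)=\sum_l a_{0l}[D,a_{1l}]\cdots[D,a_{nl}]=0$ in $\mathcal{Q}(\mathcal{H})$, and consider the lift
\[
\widetilde{\omega}\;=\;\sum_l\,(a_{0l}\otimes u)\,d(a_{1l}\otimes u)\cdots d(a_{nl}\otimes u)\;\in\;\Omega^n(\mathcal{A}\otimes\mathcal{S}).
\]
The crucial observation is that $Nu=uN=0$, since $Ne_0=0$; hence $[\varSigma^2 D,a\otimes u]=[D,a]\otimes u$ for every $a\in\mathcal{A}$, with no contribution from the $F\otimes N$ summand of $\varSigma^2 D$. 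It follows that $\pi\bigl(\widetilde{\omega}\bigr)=\pi(\omega)\otimes u=0$, so $\widetilde{\omega}\in J_0^n(\mathcal{A}\otimes\mathcal{S})$, and in exactly the same way $\pi\bigl(d\widetilde{\omega}\bigr)=\bigl(\sum_l[D,a_{0l}][D,a_{1l}]\cdots[D,a_{nl}]\bigr)\otimes u=\pi(d\omega)\otimes u$. Therefore $\pi(d\omega)\otimes u=\pi\bigl(d\widetilde{\omega}\bigr)\in\pi\bigl(dJ_0^n(\mathcal{A}\otimes\mathcal{S})\bigr)$, which is precisely what the reduction required. Combining this with the first paragraph gives $\pi(d\omega)\otimes e_{ij}\in\pi\bigl(dJ_0^n(\mathcal{A}\otimes\mathcal{S})\bigr)$ for all $i,j$, and passing to linear combinations proves the lemma.

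The construction is uniform in $n\geq1$, so no induction is needed; for $n=1$ it recovers the inclusion $\pi\bigl(dJ_0^1(\mathcal{A})\bigr)\otimes\mathcal{S}\subseteq\pi\bigl(dJ_0^1(\mathcal{A}\otimes\mathcal{S})\bigr)$ obtained in Lemma \ref{1 forms involving A,S}. The one delicate point, and the real content of the argument, is ensuring that the lift $\widetilde{\omega}$ genuinely lands in $J_0^n$: a naive lift replacing $u$ by an identity block matrix $I_{(T)}$ would produce the extra term $[F,a]\otimes NI_{(T)}$ in $[\varSigma^2 D,a\otimes I_{(T)}]$, which in general does not vanish modulo compacts; tensoring instead with the rank-one projection $u$, whose support meets only the zero eigenvalue of $N$, is exactly what kills this term, after which the two-sided ideal property of $\pi\bigl(dJ_0^n(\mathcal{A}\otimes\mathcal{S})\bigr)$ transports the conclusion from $u$ to an arbitrary matrix unit $e_{ij}$.
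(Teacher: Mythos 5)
Your argument is correct, and at its core it is the same strategy the paper uses: lift a relation from $\mathcal{A}$ to $\mathcal{A}\otimes\mathcal{S}$ by tensoring with a diagonal projection that interacts trivially with $N$, and then exploit the fact that $\pi\left(dJ_0^n\right)$ absorbs multiplication by degree-zero elements. The implementations differ, though, in a way worth noting. The paper lifts $a\mapsto a\otimes I_{(k)}$, where $I_{(k)}$ is the identity block of the same order as $T_k$, and then simply left-multiplies by $1\otimes T_k$, using $T_kI_{(k)}=T_k$; this avoids your decomposition into elementary matrices and the two-sided conjugation by $1\otimes e_{i1}$ and $1\otimes e_{1j}$. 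But that route tacitly uses the identity $[\varSigma^2 D,a\otimes I_{(k)}]=[D,a]\otimes I_{(k)}$, which is exact only up to the extra term $[F,a]\otimes NI_{(k)}$ (the block $I_{(k)}$ commutes with $N$, but $NI_{(k)}\neq 0$ as soon as the block has size at least two); that term is dropped in the paper's computation, and it is harmless only when $[F,a]$ is compact --- true for the classical spectral triples and their iterated suspensions, but not literally a consequence of Conditions (A)--(B). Your choice of the rank-one projection $u$ makes the corresponding identity $[\varSigma^2 D,a\otimes u]=[D,a]\otimes u$ hold on the nose, since $Nu=uN=0$, so your lift $\widetilde{\omega}$ lands in $J_0^n(\mathcal{A}\otimes\mathcal{S})$ without any side condition; the price is the extra, entirely routine, step of writing $T$ in terms of matrix units and using the two-sided absorption property, which you justify correctly (left and right multiplication by $1\otimes e_{i1}$, $1\otimes e_{1j}$ together with $e_{i1}e_{11}e_{1j}=e_{ij}$). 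Your version is also uniform in $n\geq 1$, whereas the paper refers the case $n=1$ back to Lemma \ref{1 forms involving A,S}. In short: same idea, but your variant is self-contained and sidesteps the silent use of compactness of $[F,a]$ that the paper's block projection entails.
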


\begin{proof}
The $n=1$ case has been addressed in Lemma ($\,$\ref{1 forms involving A,S}$\,$). Let's take $n\geq 2$. Now choose any $\,\sum_k \prod_{j=0}^n[D,a_{jk}]\otimes T_k\in \pi\left(dJ_0^n(\mathcal{A})\right)\otimes \mathcal{S}$.
Then $\,a_{0k}\prod_{j=1}^n[D,a_{jk}]=0$ for each $k$. Let $I_{(k)}$ be the infinite matrix having an identity block matrix in top left most corner of order same as that of $T_{k}$ and zero elsewhere. Then $\,\prod_{j=0}^n
[\varSigma^2D,a_{jk}\otimes I_{(k)}]\in \pi\left(dJ_0^n(\mathcal{A}\otimes \mathcal{S})\right)$ for each $k$ and hence, $\,(1\otimes T_k)\prod_{j=0}^n[\varSigma^2D,a_{jk}\otimes I_{(k)}]\in \pi\left(dJ_0^n(\mathcal{A}
\otimes \mathcal{S})\right)$ for each $k$. Now observe that $$\sum_k \prod_{j=0}^n[D,a_{jk}]\otimes T_k=\sum_k (1\otimes T_k)\prod_{j=0}^n[\varSigma^2D,a_{jk}\otimes I_{(k)}]\,,$$ which completes the proof.
\end{proof}

\begin{lemma}\label{the denominators}
$\pi\left(dJ_0^n(\mathcal{A}\otimes\mathcal{S})\right)=\pi\left(dJ_0^n(\mathcal{A})\right)\otimes\mathcal{S}+\sum_{r=0}^nF^{n+1-r}\pi\left(\Omega^r(\mathcal{A})\right)\otimes\mathcal{S}$, for all $n\geq 1$.
\end{lemma}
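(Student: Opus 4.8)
The plan is to establish the two inclusions separately. The inclusion ``$\supseteq$'' is immediate from the earlier results: Lemma~\ref{to refer in thm} gives $\sum_{r=0}^{n}F^{n+1-r}\pi(\Omega^r(\mathcal{A}))\otimes\mathcal{S}\subseteq\pi(dJ_0^n(\mathcal{A}\otimes\mathcal{S}))$, and Lemma~\ref{containment of top degree} gives $\pi(dJ_0^n(\mathcal{A}))\otimes\mathcal{S}\subseteq\pi(dJ_0^n(\mathcal{A}\otimes\mathcal{S}))$; since $\pi(dJ_0^n(\mathcal{A}\otimes\mathcal{S}))$ is a linear subspace of $\pi(\Omega^{n+1}(\varSigma^2\mathcal{A}))$, the sum of these two subspaces is again contained in it.

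For ``$\subseteq$'' I would first reduce to an intersection statement. Put $M:=\sum_{r=0}^{n}F^{n+1-r}\pi(\Omega^r(\mathcal{A}))\otimes\mathcal{S}$; by Lemma~\ref{to refer in thm}, $M\subseteq\pi(dJ_0^n(\mathcal{A}\otimes\mathcal{S}))$. Using Lemma~\ref{n forms involving A,S} in degree $n+1$ one has $\pi(\Omega^{n+1}(\mathcal{A}\otimes\mathcal{S}))=\pi(\Omega^{n+1}(\mathcal{A}))\otimes\mathcal{S}+M$. Hence an arbitrary $\omega\in\pi(dJ_0^n(\mathcal{A}\otimes\mathcal{S}))\subseteq\pi(\Omega^{n+1}(\mathcal{A}\otimes\mathcal{S}))$ can be written $\omega=X+Y$ with $X\in\pi(\Omega^{n+1}(\mathcal{A}))\otimes\mathcal{S}$ and $Y\in M$, and then $X=\omega-Y\in\pi(dJ_0^n(\mathcal{A}\otimes\mathcal{S}))$ as well. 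So it is enough to prove
\[
\pi\big(dJ_0^n(\mathcal{A}\otimes\mathcal{S})\big)\cap\big(\pi(\Omega^{n+1}(\mathcal{A}))\otimes\mathcal{S}\big)\subseteq\pi\big(dJ_0^n(\mathcal{A})\big)\otimes\mathcal{S},
\]
since then $\omega=X+Y\in\pi(dJ_0^n(\mathcal{A}))\otimes\mathcal{S}+M$, which is the asserted right-hand side.

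To prove this intersection statement I would take $\zeta\in J_0^n(\mathcal{A}\otimes\mathcal{S})$, write it in elementary matrices as $\zeta=\sum_k(a_{0k}\otimes T_{0k})\,d(a_{1k}\otimes T_{1k})\cdots d(a_{nk}\otimes T_{nk})$ with $a_{mk}\otimes T_{mk}=\sum_{p,q}a_{mk}^{pq}\otimes e_{pq}$, and expand $\pi(d\zeta)$ and $\pi(\zeta)$ entrywise exactly as in the proof of Lemma~\ref{general element}, using $[\varSigma^2 D,a\otimes T]=[D,a]\otimes T+Fa\otimes[N,T]$, $[N,e_{pq}]=(p-q)e_{pq}$, and grouping terms by the number of ``$Fa(p-q)$'' factors chosen. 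The $e_{is}$-entry of $\pi(d\zeta)$ is then the sum of $\pi(d\zeta_{is})$, where $\zeta_{is}:=\sum_k\sum_{i_1,\dots,i_n}a_{0k}^{i i_1}\,da_{1k}^{i_1 i_2}\cdots da_{nk}^{i_n s}\in\Omega^n(\mathcal{A})$ is the entrywise $n$-form over $\mathcal{A}$, a part carrying at least one $F$, and a ``junk'' part of strictly smaller commutator-degree. The hypothesis $\pi(d\zeta)\in\pi(\Omega^{n+1}(\mathcal{A}))\otimes\mathcal{S}$ kills the $F$-part of each entry, and $\pi(\zeta)=0$, read off entrywise, forces (up to the junk terms) $\pi(\zeta_{is})=0$, i.e.\ $\zeta_{is}\in J_0^n(\mathcal{A})$, so $\pi(d\zeta_{is})\in\pi(dJ_0^n(\mathcal{A}))$; the junk terms are then seen to re-enter $\pi(dJ_0^n(\mathcal{A}))\otimes\mathcal{S}$ after applying $d$. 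Summing over the finitely many nonzero $(i,s)$ gives the claim. The main obstacle is precisely this disentangling of the $F$-degree and commutator-degree gradings inside single scalar identities in the Calkin algebra: this is where Conditions $(A)$ and $(B)$ and the reduction $F\mathcal{A}\cap\mathcal{A}=\{0\}$ (Remark~\ref{essential remark}) are used — $(A)$ to move $F$ past the $[D,a]$'s modulo compacts, $(B)$ to keep $\pi(\Omega^k(\mathcal{A}))$ inside $\mathcal{A}\otimes\mathcal{E}nd_{\mathcal{A}}(\mathcal{H}^\infty)$, and $F\mathcal{A}\cap\mathcal{A}=\{0\}$ to separate the $F$-valued from the $\mathcal{A}$-valued parts — and it requires the full multi-index bookkeeping rather than the single clever choice of coefficients made in Lemma~\ref{general element}.
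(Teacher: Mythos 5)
Your ``$\supseteq$'' direction and the reduction of ``$\subseteq$'' to the statement that the top-degree component $X\in\pi(\Omega^{n+1}(\mathcal{A}))\otimes\mathcal{S}$ of an element of $\pi(dJ_0^n(\mathcal{A}\otimes\mathcal{S}))$ again lies in $\pi(dJ_0^n(\mathcal{A}\otimes\mathcal{S}))$ are both fine (the first display of Lemma~\ref{n forms involving A,S} in degree $n+1$ does give exactly the decomposition you use, and the $F$-powers attached to the lower-degree terms automatically match the pattern $F^{n+1-r}\pi(\Omega^r(\mathcal{A}))\otimes\mathcal{S}$ because $F^2=1$). The problem is the entrywise argument you propose for the remaining intersection statement: it has a genuine gap at exactly the point you flag as ``up to the junk terms.'' For a fixed entry $(i,s)$, the relation $\pi(\zeta)=0$ reads $\xi_{\mathrm{top}}+\xi_{\mathrm{junk}}+F\eta=0$, where $\xi_{\mathrm{top}}=\sum a_{0}^{ii_1}\prod[D,a_j^{\cdot\cdot}]$ has full commutator degree and $\xi_{\mathrm{junk}}$ collects the even-$t$ hatted terms, which carry \emph{no} $F$ but have strictly smaller commutator degree. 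Conditions $(A)$, $(B)$ and $F\mathcal{A}\cap\mathcal{A}=\{0\}$ let you split off $F\eta$, but they give you no way to separate $\xi_{\mathrm{top}}$ from $\xi_{\mathrm{junk}}$: both live in sums of the spaces $\pi(\Omega^r(\mathcal{A}))$, which are not graded by commutator degree and can overlap arbitrarily. So you cannot conclude $\zeta_{is}\in J_0^n(\mathcal{A})$, only $\pi(\zeta_{is})\in\sum_{r<n}\pi(\Omega^r(\mathcal{A}))$, and the assertion that the resulting error ``re-enters $\pi(dJ_0^n(\mathcal{A}))\otimes\mathcal{S}$ after applying $d$'' is precisely the content of the lemma in disguise, not a consequence of the bookkeeping. (A secondary issue: the intersection statement you reduce to, with target exactly $\pi(dJ_0^n(\mathcal{A}))\otimes\mathcal{S}$ rather than the full right-hand side, is stronger than what the lemma requires, and nothing in the paper guarantees it.)

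The paper avoids this disentanglement entirely by inducting on $n$. It uses $\Omega^n(\mathcal{B})=\Omega^1(\mathcal{B})\otimes_{\mathcal{B}}\Omega^{n-1}(\mathcal{B})$, hence $J_0^n=J_0^1\otimes\Omega^{n-1}+\Omega^1\otimes J_0^{n-1}$, and the graded Leibniz rule to get
\[
\pi\left(dJ_0^n(\mathcal{A}\otimes\mathcal{S})\right)\subseteq\pi\left(dJ_0^1(\mathcal{A}\otimes\mathcal{S})\right)\pi\left(\Omega^{n-1}(\mathcal{A}\otimes\mathcal{S})\right)+\pi\left(\Omega^1(\mathcal{A}\otimes\mathcal{S})\right)\pi\left(dJ_0^{n-1}(\mathcal{A}\otimes\mathcal{S})\right),
\]
and then substitutes Lemma~\ref{1 forms involving A,S}, Lemma~\ref{n forms involving A,S} and the inductive hypothesis and multiplies out. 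All the hard analytic separation (where $(B)$ and $F\mathcal{A}\cap\mathcal{A}=\{0\}$ are actually invoked) is thereby confined to the degree-one case, which is already done. If you want to salvage your approach you would have to prove directly that the junk terms land in the right-hand side, and the cleanest way to do that is, in effect, to redo this induction.
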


\begin{proof}
Since $\pi\left(dJ_0^n(\mathcal{A}\otimes\mathcal{S})\right)\subseteq\pi\left(\Omega^{n+1}(\mathcal{A}\otimes\mathcal{S})\right)$, Lemma ($\,$\ref{n forms involving A,S}$\,$) says the following $$\pi\left(dJ_0^n(\mathcal{A}
\otimes\mathcal{S})\right)\subseteq \pi\left(\Omega^{n+1}(\mathcal{A})\right)\otimes\mathcal{S}+\sum_{r=0}^nF^{n+1-r}\pi\left(\Omega^r(\mathcal{A})\right)\otimes\mathcal{S}.$$ Now Lemma ($\,$\ref{general element}$\,$), ($\,$\ref{to refer in thm}$\,$) and 
($\,$\ref{containment of top degree}$\,$) shows that $$\pi\left(dJ_0^n(\mathcal{A})\right)\otimes \mathcal{S}+\sum_{r=0}^nF^{n+1-r}\pi\left(\Omega^r(\mathcal{A})\right)\otimes\mathcal{S}\subseteq\pi\left(dJ_0^n(\mathcal{A}
\otimes\mathcal{S})\right).$$ We need to show that $$\pi\left(dJ_0^n(\mathcal{A}\otimes\mathcal{S})\right)\subseteq \pi\left(dJ_0^n(\mathcal{A})\right)\otimes \mathcal{S}+\sum_{r=0}^nF^{n+1-r}\pi\left(\Omega^r(\mathcal{A})
\right)\otimes\mathcal{S}.$$ We use induction on $n\,$. Lemma ($\,$\ref{1 forms involving A,S}$\,$) gives the basis step of the induction and suppose that $$\pi\left(dJ_0^{n-1}(\mathcal{A}\otimes\mathcal{S})\right)=
\pi\left(dJ_0^{n-1}(\mathcal{A})\right)\otimes\mathcal{S}+\sum_{r=0}^{n-1}F^{n-r}\pi\left(\Omega^r(\mathcal{A})\right)\otimes\mathcal{S}.$$ Recall that for algebra $\mathcal{B}$ and for all $n\geq 1\,,$
\begin{eqnarray*}
 \Omega^n(\mathcal{B}) & = & \underbrace{\Omega^1(\mathcal{B})\otimes_{\mathcal{B}}\ldots\ldots\otimes_{\mathcal{B}}\Omega^1(\mathcal{B})}_{n\,\,times}\\
                       & = & \Omega^1(\mathcal{B})\otimes_{\mathcal{B}}\Omega^{n-1}(\mathcal{B})\,.
\end{eqnarray*}
Hence $\,J_0^n=J_0^1\otimes\Omega^{n-1}+\Omega^1\otimes J_0^{n-1}$. Since $\,d\,$ satisfies graded Leibniz rule, we have $$dJ_0^n\subseteq (dJ_0^1).\,\Omega^{n-1}+J_0^1.\,(d\Omega^{n-1})+(d\Omega^1).\,J_0^{n-1}+\Omega^1.\,
(dJ_0^{n-1})$$ (recall the graded product on $\,\Omega^\bullet\,$) and hence applying the algebra homomorphism $\,\pi\,$ we get $\,\pi(dJ_0^n)\subseteq\pi(dJ_0^1)\pi(\Omega^{n-1})+\pi(\Omega^1)\pi(dJ_0^{n-1})$.
Since $J^\bullet$ is a graded ideal in $\,\Omega^\bullet$ we have
\begin{eqnarray*}
 \pi\left(dJ_0^n(\mathcal{A}\otimes\mathcal{S})\right) & \subseteq & \pi\left(dJ_0^1(\mathcal{A}\otimes\mathcal{S})\right)\pi\left(\Omega^{n-1}(\mathcal{A}\otimes\mathcal{S})\right)+\pi\left(\Omega^1(\mathcal{A}\otimes
\mathcal{S})\right)\pi\left(dJ_0^{n-1}(\mathcal{A}\otimes\mathcal{S})\right)\\
& = & \left(\pi\left(dJ_0^1(\mathcal{A})\right)\otimes\mathcal{S}+F\pi\left(\Omega^1(\mathcal{A})\right)\otimes\mathcal{S}+\mathcal{A}\otimes\mathcal{S}\right)\left(\sum_{r=0}^{n-1} F^r\pi(\Omega^{n-1-r}(\mathcal{A}))\otimes
\mathcal{S}\right)\\
&   & +\left(\pi(\Omega^1(\mathcal{A}))\otimes\mathcal{S}+F\mathcal{A}\otimes\mathcal{S}\right)\left(\pi\left(dJ_0^{n-1}(\mathcal{A})\right)\otimes\mathcal{S}+\sum_{r=0}^{n-1} F^{n-r}\pi(\Omega^r(\mathcal{A}))\otimes
\mathcal{S}\right)\\
& = & \sum_{r=0}^{n-1}F^r\pi(dJ_0^{n-r}(\mathcal{A}))\otimes\mathcal{S}+\sum_{r=0}^{n-1}F^{r+1}\pi(\Omega^{n-r}(\mathcal{A}))\otimes\mathcal{S}\\
&   & +\sum_{r=0}^{n-1}F^r\pi(\Omega^{n-1-r}(\mathcal{A}))\otimes\mathcal{S}+\pi\left(dJ_0^n(\mathcal{A})\right)\otimes\mathcal{S}+F\pi\left(dJ_0^{n-1}(\mathcal{A})\right)\otimes\mathcal{S}\\
&   & +\sum_{r=0}^{n-1}F^{n+1-r}\pi\left(\Omega^r(\mathcal{A})\right)\otimes\mathcal{S}+\sum_{r=1}^nF^{n+1-r}\pi\left(\Omega^r(\mathcal{A})\right)\otimes\mathcal{S}\\
& = & \pi\left(dJ_0^n(\mathcal{A})\right)\otimes \mathcal{S}+\sum_{r=0}^nF^{n+1-r}\pi\left(\Omega^r(\mathcal{A})\right)\otimes\mathcal{S}.
\end{eqnarray*}
Here the first equality follows from Lemmas ($\,$\ref{1 forms involving A,S}$\,)\,,\,(\,$\ref{n forms involving A,S}$\,$) and the induction hypothesis.
\end{proof}

\begin{remark}
Note that the second condition, i,e. $[D,\mathcal{A}]\subseteq\mathcal{A}\otimes\mathcal{E}nd_{\mathcal{A}}(\mathcal{H}^\infty)$, is needed only for Lemmas $(\,\ref{1 forms involving A,S}\,,\,\ref{the denominators}\,)$.
\end{remark}

\begin{theorem}\label{final thm}
For the spectral triple $\, \left(\varSigma^2 \mathcal{A},\varSigma^2 \mathcal{H},\varSigma^2 D\right)\, $, we have
\begin{enumerate}
 \item $\Omega_{\varSigma^2 D}^1\left(\varSigma^2 \mathcal{A}\right) \cong \Omega_D^1(\mathcal{A})\otimes\mathcal{S}\bigoplus \varSigma^2 \mathcal{A}\,$.
 \item $\Omega_{\varSigma^2 D}^n\left(\varSigma^2 \mathcal{A}\right) \cong \Omega_D^n(\mathcal{A})\otimes \mathcal{S}\,$, $\,$ for all $n\geq 2\, $.
 \item The differential $\, \, \delta^0: \varSigma^2 \mathcal{A}\longrightarrow \Omega_{\varSigma^2 D}^1\left(\varSigma^2 \mathcal{A}\right)\, $ is given by,
  \begin{center}
   $a\otimes T + f \longmapsto [D,a]\otimes T \bigoplus \left(a\otimes[N,T]+f^\prime\right)$.
  \end{center}
  \item The differential $\, \, \delta^1: \Omega_{\varSigma^2 D}^1\left(\varSigma^2 \mathcal{A}\right) \longrightarrow \Omega_{\varSigma^2 D}^2\left(\varSigma^2 \mathcal{A}\right)\, $ is given by,
  \begin{center}
   $\delta^1|_{\Omega_D^1(\mathcal{A})\otimes \mathcal{S}}=d^1\otimes 1\,\,$ and $\,\,\delta^1|_{\varSigma^2 \mathcal{A}}=0$.
  \end{center}
 \item The differential $\, \, \delta^n: \Omega_{\varSigma^2 D}^n\left(\varSigma^2 \mathcal{A}\right) \longrightarrow \Omega_{\varSigma^2 D}^{n+1}\left(\varSigma^2 \mathcal{A}\right)\, $ is given by,
  \begin{center}
   $\delta^n=d^n\otimes 1$
  \end{center}
for all $n\geq 2\,$.
\end{enumerate}
Here $\, d:\Omega_D^\bullet(\mathcal{A})\longrightarrow\Omega_D^{\bullet+1}(\mathcal{A})$ is the differential of the Connes' complex.
\end{theorem}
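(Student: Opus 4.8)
The plan is to deduce the whole statement from the structural results already established, chiefly Proposition~\ref{imp theorem}, Lemma~\ref{n forms involving A,S}, Lemma~\ref{the denominators}, the isomorphism (\ref{main iso}), and Lemma~\ref{containment of top degree}. Fix $n\geq 2$. By Proposition~\ref{imp theorem} it is enough to show $\Omega^n_{\varSigma^2 D}(\mathcal{A}\otimes\mathcal{S})\cong\Omega^n_D(\mathcal{A})\otimes\mathcal{S}$, and by (\ref{main iso}) the left side is $\pi(\Omega^n(\mathcal{A}\otimes\mathcal{S}))/\pi(dJ_0^{n-1}(\mathcal{A}\otimes\mathcal{S}))$. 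Plugging in Lemma~\ref{n forms involving A,S} and Lemma~\ref{the denominators} and reindexing $r\mapsto n-r$, the numerator is $\pi(\Omega^n(\mathcal{A}))\otimes\mathcal{S}+\sum_{r=0}^{n-1}F^{n-r}\pi(\Omega^r(\mathcal{A}))\otimes\mathcal{S}$ while the denominator is $\pi(dJ_0^{n-1}(\mathcal{A}))\otimes\mathcal{S}+\sum_{r=0}^{n-1}F^{n-r}\pi(\Omega^r(\mathcal{A}))\otimes\mathcal{S}$; the ``$F$-twisted tail'' $Z:=\sum_{r=0}^{n-1}F^{n-r}\pi(\Omega^r(\mathcal{A}))\otimes\mathcal{S}$ appears identically in both. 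Writing $X=\pi(\Omega^n(\mathcal{A}))\otimes\mathcal{S}$ and $Y=\pi(dJ_0^{n-1}(\mathcal{A}))\otimes\mathcal{S}\subseteq X$, the modular law gives $\Omega^n_{\varSigma^2 D}(\mathcal{A}\otimes\mathcal{S})\cong(X+Z)/(Y+Z)\cong X/(Y+(X\cap Z))$. The comparison map $\Omega^n_D(\mathcal{A})\otimes\mathcal{S}\to\Omega^n_{\varSigma^2 D}(\mathcal{A}\otimes\mathcal{S})$, $[\omega]\otimes e_{ij}\mapsto[\omega\otimes e_{ij}]$, is well-defined by Lemma~\ref{containment of top degree}, surjective by the cancellation just noted, and an isomorphism as soon as $X\cap Z\subseteq Y$; in that case $\Omega^n_{\varSigma^2 D}(\mathcal{A}\otimes\mathcal{S})\cong X/Y=(\pi(\Omega^n(\mathcal{A}))/\pi(dJ_0^{n-1}(\mathcal{A})))\otimes\mathcal{S}=\Omega^n_D(\mathcal{A})\otimes\mathcal{S}$ by (\ref{main iso}), which with Proposition~\ref{imp theorem} proves (2). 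Part (1) is the same argument for $n=1$, where there is no $dJ_0$ term, so the only point is $\pi(\Omega^1(\mathcal{A}))\otimes\mathcal{S}\cap F\mathcal{A}\otimes\mathcal{S}=\{0\}$, combined with $F\mathcal{A}\otimes\mathcal{S}\cong\mathcal{A}\otimes\mathcal{S}$ and with restoring the $\mathbb{C}[z,z^{-1}]$ summand using $\varSigma^2\mathcal{A}\cong\mathcal{A}\otimes\mathcal{S}\oplus\mathbb{C}[z,z^{-1}]$ as vector spaces.

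The one genuinely substantial step, and the place where Conditions~$(A)$, $(B)$ and the normalization $F\mathcal{A}\cap\mathcal{A}=\{0\}$ of Remark~\ref{essential remark} are indispensable, is the transversality $X\cap Z\subseteq Y$; equivalently, comparing coefficients of the elementary matrices, $\pi(\Omega^n(\mathcal{A}))\cap\sum_{r=0}^{n-1}F^{n-r}\pi(\Omega^r(\mathcal{A}))\subseteq\pi(dJ_0^{n-1}(\mathcal{A}))$ in $\mathcal{Q}(\mathcal{H})$. I would split the sum according to the parity of $n-r$. For $n-r$ even, $F^{n-r}$ is the identity in $\mathcal{Q}(\mathcal{H})$, so those terms contribute the lower-degree forms $\sum_{r<n,\,r\equiv n}\pi(\Omega^r(\mathcal{A}))$, and one must show this sits inside $\pi(dJ_0^{n-1}(\mathcal{A}))$: Condition~$(B)$, forcing $[D,\mathcal{A}]$ to behave like order-zero, $\mathcal{A}$-linear symbols, is exactly what makes $\pi(\Omega^\bullet(\mathcal{A}))$ filtered by form degree with the junk lying strictly below the top degree, as in Connes' de Rham computation. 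For $n-r$ odd the terms contribute $F\cdot\sum_{r<n,\,r\not\equiv n}\pi(\Omega^r(\mathcal{A}))$, and one shows this meets $\pi(\Omega^n(\mathcal{A}))$ only in $\{0\}$: this is the abstract form of the symbolic fact that $F$ (symbol proportional to $c(\xi)/|\xi|$) cannot cancel against an untwisted product of commutators, and it is driven by Condition~$(A)$ (so $F$ commutes with $[D,\mathcal{A}]$ modulo compacts) together with $F\mathcal{A}\cap\mathcal{A}=\{0\}$. Combining the two parities through the modular law yields $X\cap Z\subseteq Y$.

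For the differentials (3)--(5) I would transport $d\omega=\pi_{\varSigma^2 D}(da_0\,da_1\cdots da_k)$ through the isomorphisms of (1), (2) by direct computation, using $\varSigma^2 D=D\otimes I+F\otimes N$ and $\mathrm{sign}(\varSigma^2 D)=F\otimes 1$. For (3), $\delta^0(a\otimes T+f)=[\varSigma^2 D,a\otimes T+f]=[D,a]\otimes T+Fa\otimes[N,T]+F\otimes f'$ once one simplifies the commutator (the stray term $[F,a]\otimes TN$ dies in $\mathcal{Q}(\mathcal{H}\otimes\ell^2(\mathbb{N}))$ since $[F,a]$ is compact in the present setting); under the identification of the $\varSigma^2\mathcal{A}$-summand with $F\mathcal{A}\otimes\mathcal{S}\oplus F\otimes\mathbb{C}[z,z^{-1}]$ this is exactly $[D,a]\otimes T\oplus(a\otimes[N,T]+f')$. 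For (4) and (5) one represents a class in $\Omega^n_D(\mathcal{A})\otimes\mathcal{S}$ by $(a_0\otimes T)\prod_{i=1}^n[\varSigma^2 D,a_i\otimes I_{(T)}]$ (as in the proof of Lemma~\ref{n forms involving A,S}), applies $d$, and simplifies: the result is $\prod_{i=0}^n[D,a_i]\otimes T$ plus terms lying in $\pi(dJ_0^n(\mathcal{A}))\otimes\mathcal{S}+\sum_{r=0}^n F^{n+1-r}\pi(\Omega^r(\mathcal{A}))\otimes\mathcal{S}=\pi(dJ_0^n(\mathcal{A}\otimes\mathcal{S}))$ by Lemma~\ref{the denominators}, so $\delta^n=d^n\otimes 1$ for $n\geq 2$ and on $\Omega^1_D(\mathcal{A})\otimes\mathcal{S}$. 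Finally $\delta^1$ vanishes on the $\varSigma^2\mathcal{A}$-summand: the $\mathbb{C}[z,z^{-1}]$-part is killed because $\Omega^2_N(\mathbb{C}[z,z^{-1}])=0$ (Proposition~\ref{forms for circle}), and for $Fa\otimes T$ one writes $Fa\otimes T=(a\otimes T)(1\otimes l)[\varSigma^2 D,1\otimes l^*]$ as in the proof of Lemma~\ref{referring lemma 0}, applies $d$, uses $F^2\otimes 1\equiv 1$, and invokes Lemma~\ref{the denominators} (or Lemma~\ref{1 forms involving A,S}) to land in $\pi(dJ_0^1(\mathcal{A}\otimes\mathcal{S}))$. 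This bookkeeping is routine but somewhat delicate; beyond the earlier lemmas the only conceptual input is again Condition~$(A)$.
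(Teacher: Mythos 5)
Your proposal is correct and follows essentially the same route as the paper: reduce via Proposition (\ref{imp theorem}) to $\mathcal{A}\otimes\mathcal{S}$, feed Lemma (\ref{n forms involving A,S}) and Lemma (\ref{the denominators}) into the isomorphism (\ref{main iso}), cancel the common $F$-twisted tail to get $\Omega_D^n(\mathcal{A})\otimes\mathcal{S}$, and compute the differentials on explicit lifts such as $(a_0\otimes T)\prod_{i}[\varSigma^2 D,a_i\otimes I_{(T)}]$ and $(b\otimes S_0)[\varSigma^2 D,1\otimes S_1]$, exactly as in the paper's proof of parts $(3)$--$(5)$. The only divergence is that you isolate the modular-law step $(X+Z)/(Y+Z)\cong X/Y$ together with its prerequisite $X\cap Z\subseteq Y$, which the paper's chain of isomorphisms in part $(2)$ performs without comment; your parity-splitting justification of that inclusion is heuristic rather than a proof, but on this point your argument is no less complete than the paper's own treatment.
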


\begin{proof}
\begin{enumerate}
\item Recall from Lemma ($\,$\ref{n forms involving A,S}$\,$), $\,\Omega_{\varSigma^2 D}^1(\mathcal{A}\otimes \mathcal{S})\cong \Omega_D^1(\mathcal{A})\otimes \mathcal{S}+F\mathcal{A}\otimes \mathcal{S}$. Since
$\mathcal{A}\otimes\mathcal{E}nd_{\mathcal{A}}(\mathcal{H}^\infty)\subseteq\mathcal{E}nd_{\mathbb{C}}(\mathcal{H}^\infty)$ by the map $\,a\otimes g\longmapsto a\circ g\,$, $F\mathcal{A}$ can be embedded in
$\,F\mathcal{A}\otimes\mathcal{E}nd_{\mathcal{A}}(\mathcal{H}^\infty)\subseteq \mathcal{E}nd_{\mathbb{C}}(\mathcal{H}^\infty)$ by the map $\,Fa\longmapsto Fa\otimes I$. Now $\,[D,\mathcal{A}]\subseteq\mathcal{E}nd
_{\mathbb{C}}(\mathcal{H}^\infty)$ and $\,F\mathcal{A}\cap\mathcal{A}=\{0\}$ gives the direct sum. Finally, use the fact that $F\mathcal{A}\otimes\mathcal{S}\cong\mathcal{A}\otimes\mathcal{S}$ and Proposition
($\,$\ref{imp theorem}$\,$) to conclude Part ($1$).
\item For all $n\geq 2\,,$ we have
\begin{eqnarray*}
&    & \Omega_{\varSigma^2 D}^n\left(\varSigma^2 \mathcal{A}\right)\\
& \cong & \frac{\pi\left(\Omega^n(\varSigma^2\mathcal{A})\right)}{\pi\left(dJ_0^{n-1}(\varSigma^2 \mathcal{A})\right)}\\
& \cong & \frac{\pi\left(\Omega^n(\mathcal{A}\otimes\mathcal{S})\right)}{\pi\left(dJ_0^{n-1}(\mathcal{A}\otimes\mathcal{S})\right)}\quad\quad by\,\, Proposition\,(\,\ref{imp theorem}\,)\\
& \cong & \frac{\sum_{r=0}^nF^r\pi\left(\Omega^{n-r}(\mathcal{A})\right)\otimes\mathcal{S}}{\pi\left(dJ_0^{n-1}(\mathcal{A})\right)\otimes\mathcal{S}+\sum_{r=0}^{n-1}F^{n-r}\pi\left(\Omega^{r}(\mathcal{A})\right)\otimes
\mathcal{S}}\quad\quad by\,\, Lemma\,(\,\ref{n forms involving A,S}\,)\,,\,(\,\ref{the denominators}\,)\\
& \cong & \frac{\pi\left(\Omega^n(\mathcal{A})\right)\otimes\mathcal{S}+\sum_{r=1}^nF^r\pi\left(\Omega^{n-r}(\mathcal{A})\right)\otimes\mathcal{S}}{\pi\left(dJ_0^{n-1}(\mathcal{A})\right)\otimes\mathcal{S}+\sum_{r=0}^{n-1}
F^{n-r}\pi\left(\Omega^{r}(\mathcal{A})\right)\otimes\mathcal{S}}\\
& \cong & \frac{\pi\left(\Omega^n(\mathcal{A})\right)\otimes\mathcal{S}+\sum_{r=0}^{n-1}F^{n-r}\pi\left(\Omega^{r}(\mathcal{A})\right)\otimes\mathcal{S}}{\pi\left(dJ_0^{n-1}(\mathcal{A})\right)\otimes\mathcal{S}+
\sum_{r=0}^{n-1}F^{n-r}\pi\left(\Omega^{r}(\mathcal{A})\right)\otimes\mathcal{S}}\\
& \cong & \frac{\pi\left(\Omega^n(\mathcal{A})\right)\otimes\mathcal{S}}{\pi\left(dJ_0^{n-1}(\mathcal{A})\right)\otimes\mathcal{S}}\\
& \cong & \Omega_D^n(\mathcal{A})\otimes\mathcal{S}
\end{eqnarray*}
\item Obvious since $[\varSigma^2D,a\otimes T+f]=[D,a]\otimes T+Fa\otimes [N,T]+f^\prime$.
\item Take arbitrary $(a_0[D,a_1]\otimes T\,,\,b\otimes S+f)\in\Omega_D^1(\mathcal{A})\otimes\mathcal{S}\bigoplus \varSigma^2\mathcal{A}\,$. Using Lemma ($\,$\ref{finite support 1}$\,$) we have $S=\sum S_0[N,S_1]$ and
Proposition ($\,$\ref{forms for circle}$\,$) implies $f=\sum f_0f_1^\prime\,$. Now, as an element of $\,\Omega_{\varSigma^2 D}^1(\varSigma^2 \mathcal{A})$,
\begin{eqnarray*}
&   & (a_0[D,a_1]\otimes T\,,\,b\otimes S+f)\\
& = & (a_0\otimes T)[\varSigma^2D,a_1\otimes I_{(T)}]+\sum(b\otimes S_0)[\varSigma^2D,1\otimes S_1]+\sum(1\otimes f_0)[\varSigma^2D,1\otimes f_1]
\end{eqnarray*}
where $I_{(T)}$ is the identity block matrix of order same as that of $T$. Hence,
\begin{eqnarray*}
&   & \delta^1((a_0[D,a_1]\otimes T\,,\,b\otimes S+f))\\
& = & \textbf{(}[\varSigma^2D,a_0\otimes T][\varSigma^2D,a_1\otimes I_{(T)}]+\sum[\varSigma^2D,b\otimes S_0][\varSigma^2D,1\otimes S_1]\\
&   & \quad+\sum[\varSigma^2D,1\otimes f_0][\varSigma^2D,1\otimes f_1]\textbf{)}+\pi(dJ_0^1(\varSigma^2\mathcal{A})),
\end{eqnarray*}
as an element of $\,\Omega_{\varSigma^2D}^2(\varSigma^2\mathcal{A})\cong\frac{\pi(\Omega^2(\varSigma^2\mathcal{A}))}{\pi(dJ_0^1(\varSigma^2\mathcal{A}))}$. Now, by Part $(2)$, we finally get $$\delta^1((a_0[D,a_1]\otimes
T\,,\,b\otimes S+f))\,=\,\left([D,a_0][D,a_1]+\pi(dJ_0^1(\mathcal{A}))\right)\otimes T,$$ as an element of $\Omega_D^2(\mathcal{A})\otimes\mathcal{S}$.
\item Follows similarly as Part $(4)$.
\end{enumerate}
\end{proof}

Now we want to iterate this Theorem and Proposition ($\,$\ref{justification for assumption}$\,$) guarantees that we are allowed to do so. Let $\,k\geq 1$ and $\varSigma^{2k}\mathcal{A}=\varSigma^2(\varSigma^{2(k-1)}
\mathcal{A})$. We put the convention $\varSigma^0\mathcal{A}=\mathcal{A}$ and $\varSigma^0D=D$. Let $\,F^{(k)}$ be the sign of the operator $\varSigma^{2(k-1)}D$, acting on the Hilbert space $\mathcal{H}\otimes\ell^2
(\mathbb{N})^{\otimes k-1}$. Then $\varSigma^{2k}D=\varSigma^{2(k-1)}D\otimes I+F^{(k)}\otimes N$ and $F^{(k)}=F\otimes 1^{\otimes k-1}$. Any element $\,\varSigma^{2k}a\,$ of $\,\varSigma^{2k}\mathcal{A}\,$ is of the form
$\varSigma^{2(k-1)}a\otimes T_{(k-1)}+f_{(k-1)}$ where $\varSigma^{2(k-1)}a\in\varSigma^{2(k-1)}\mathcal{A}\,,\,T_{(k-1)}\in\mathcal{S}$ and $f_{(k-1)}\in\mathbb{C}[z,z^{-1}]$. Using this functional equation one can write
$\varSigma^{2k}a$ in terms of elements only from $\mathcal{A},\,\mathcal{S}$ and $\mathbb{C}[z,z^{-1}]$.

\begin{corollaryl}
For the spectral triple $\,\left(\varSigma^{2k}\mathcal{A},\varSigma^{2k}\mathcal{H},\varSigma^{2k}D\right),\,k\geq 1$, we have
\begin{enumerate}
 \item $\Omega_{\varSigma^{2k}D}^1\left(\varSigma^{2k}\mathcal{A}\right)\cong\Omega_D^1(\mathcal{A})\otimes\mathcal{S}^{\otimes k}\bigoplus\bigoplus_{j=1}^k\left(\varSigma^{2j}\mathcal{A}\otimes\mathcal{S}^{\otimes(k-j)}\right)\,$.
 \item $\Omega_{\varSigma^{2k}D}^n\left(\varSigma^{2k}\mathcal{A}\right)\cong\Omega_D^n(\mathcal{A})\otimes\mathcal{S}^{\otimes k}\,$, $\,$ for all $n\geq 2\, $.
 \item The differential $\, \, \delta^0: \varSigma^{2k}\mathcal{A}\longrightarrow \Omega_{\varSigma^{2k}D}^1\left(\varSigma^{2k}\mathcal{A}\right)\, $ is given by,
  \begin{eqnarray*}
   &   & \varSigma^{2(k-1)}a\otimes T_{(k-1)} + f_{(k-1)}\quad\longmapsto\quad [D,a]\otimes T_{(0)}\otimes T_{(1)}\otimes\ldots\otimes T_{(k-1)}\,\,\bigoplus\\
   &   & \quad\quad\quad\quad\quad\quad\quad\quad\quad\quad\quad\quad\quad\quad\quad\,\,\left(\varSigma^{2(k-1)}a\otimes[N,T_{(k-1)}]+ f_{(k-1)}^\prime\right)\,\,\bigoplus\\
   &   & \quad\quad\quad\quad\quad\quad\quad\quad\quad\quad\quad\quad\quad\quad\quad\,\,\left(\bigoplus_{j=2}^k\left(\varSigma^{2(k-j)}a\otimes[N,T_{(k-j)}]+ f_{(k-j)}^\prime\right)\otimes Q_j\right)
  \end{eqnarray*} where $\,Q_j:=T_{(k-(j-1))}\otimes T_{(k-(j-2))}\otimes\ldots\otimes T_{(k-1)}\in\mathcal{S}^{\otimes(j-1)}\,$.
  \item The differential $\, \, \delta^1: \Omega_{\varSigma^{2k}D}^1\left(\varSigma^{2k}\mathcal{A}\right) \longrightarrow \Omega_{\varSigma^{2k}D}^2\left(\varSigma^{2k}\mathcal{A}\right)\, $ is given by,
  \begin{center}
   $\delta^1|_{\Omega_D^1(\mathcal{A})\otimes\mathcal{S}^{\otimes k}}=d^1\otimes 1^{\otimes k}\,\,$ and $\,\,\delta^1|_{\bigoplus_{j=1}^k\left(\varSigma^{2j}\mathcal{A}\otimes\mathcal{S}^{\otimes(k-j)}\right)}=0$.
  \end{center}
 \item The differential $\, \, \delta^n: \Omega_{\varSigma^{2k}D}^n\left(\varSigma^{2k}\mathcal{A}\right) \longrightarrow \Omega_{\varSigma^{2k}D}^{n+1}\left(\varSigma^{2k}\mathcal{A}\right)\, $ is given by,
  \begin{center}
   $\delta^n=d^n\otimes 1^{\otimes k}$
  \end{center}
for all $n\geq 2\,$.  
\end{enumerate}
Here $\, d:\Omega_D^\bullet(\mathcal{A})\longrightarrow\Omega_D^{\bullet+1}(\mathcal{A})$ is the differential of the Connes' complex at the $\,k=0\,$ level.
\end{corollaryl}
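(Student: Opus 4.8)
The plan is a direct induction on $k$, with Theorem \ref{final thm} serving as the single inductive step. The base case $k=1$ is exactly Theorem \ref{final thm}: one has $\varSigma^{2}\mathcal{A}=\varSigma^{2}(\varSigma^{0}\mathcal{A})$, $\mathcal{S}^{\otimes 1}=\mathcal{S}$, the empty sum $\bigoplus_{j=2}^{1}$ vanishes, and $\varSigma^{0}a=a$, so the displayed $\delta^{0}$ reduces to the one in Theorem \ref{final thm}$(3)$. For the inductive step one applies Theorem \ref{final thm} not to $(\mathcal{A},\mathcal{H},D)$ but to the quantum double suspended spectral triple $(\varSigma^{2(k-1)}\mathcal{A},\varSigma^{2(k-1)}\mathcal{H},\varSigma^{2(k-1)}D)$. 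This is legitimate because Proposition \ref{justification for assumption} (together with Remark \ref{essential remark}) ensures that Conditions $(A),(B)$ and the normalization $F\mathcal{A}\cap\mathcal{A}=\{0\}$ are stable under $\varSigma^{2}$, hence continue to hold at every level; moreover $\varSigma^{2k}D=\varSigma^{2(k-1)}D\otimes I+F^{(k)}\otimes N$ with $F^{(k)}=F\otimes 1^{\otimes(k-1)}$, so the suspension data match what Theorem \ref{final thm} requires.

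Granting this, parts $(1)$ and $(2)$ follow by combining the corresponding parts of Theorem \ref{final thm} with the induction hypothesis and associativity of the tensor product. For $n\geq 2$, Theorem \ref{final thm}$(2)$ gives $\Omega_{\varSigma^{2k}D}^{n}(\varSigma^{2k}\mathcal{A})\cong\Omega_{\varSigma^{2(k-1)}D}^{n}(\varSigma^{2(k-1)}\mathcal{A})\otimes\mathcal{S}$, which by the induction hypothesis equals $\Omega_{D}^{n}(\mathcal{A})\otimes\mathcal{S}^{\otimes(k-1)}\otimes\mathcal{S}=\Omega_{D}^{n}(\mathcal{A})\otimes\mathcal{S}^{\otimes k}$. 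For $n=1$, Theorem \ref{final thm}$(1)$ produces $\Omega_{\varSigma^{2(k-1)}D}^{1}(\varSigma^{2(k-1)}\mathcal{A})\otimes\mathcal{S}\bigoplus\varSigma^{2k}\mathcal{A}$; substituting the induction hypothesis and tensoring by $\mathcal{S}$ turns the first summand into $\Omega_{D}^{1}(\mathcal{A})\otimes\mathcal{S}^{\otimes k}\bigoplus\bigoplus_{j=1}^{k-1}\left(\varSigma^{2j}\mathcal{A}\otimes\mathcal{S}^{\otimes(k-j)}\right)$, while $\varSigma^{2k}\mathcal{A}=\varSigma^{2k}\mathcal{A}\otimes\mathcal{S}^{\otimes 0}$ supplies the missing $j=k$ term, yielding $(1)$.

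Parts $(4)$ and $(5)$ are likewise quick. Theorem \ref{final thm}$(5)$ gives $\delta^{n}=d_{\varSigma^{2(k-1)}D}^{n}\otimes 1$ for $n\geq 2$, and the induction hypothesis $d_{\varSigma^{2(k-1)}D}^{n}=d^{n}\otimes 1^{\otimes(k-1)}$ yields $d^{n}\otimes 1^{\otimes k}$. Similarly Theorem \ref{final thm}$(4)$ combined with the induction hypothesis gives $\delta^{1}=d^{1}\otimes 1^{\otimes k}$ on $\Omega_{D}^{1}(\mathcal{A})\otimes\mathcal{S}^{\otimes k}$ and $\delta^{1}=0$ on the remaining summands (the $\varSigma^{2k}\mathcal{A}$-part killed at the top level, the lower summands killed by the induction hypothesis).

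The one genuinely computational point is part $(3)$. The argument begins from Theorem \ref{final thm}$(3)$ applied at level $k$, namely
\[
\delta^{0}\bigl(\varSigma^{2(k-1)}a\otimes T_{(k-1)}+f_{(k-1)}\bigr)=\delta^{0}_{k-1}\bigl(\varSigma^{2(k-1)}a\bigr)\otimes T_{(k-1)}\,\bigoplus\,\bigl(\varSigma^{2(k-1)}a\otimes[N,T_{(k-1)}]+f_{(k-1)}^{\prime}\bigr),
\]
where $\delta^{0}_{k-1}$ denotes the level-$(k-1)$ differential, and then substitutes the induction hypothesis for $\delta^{0}_{k-1}(\varSigma^{2(k-1)}a)$ after expanding $\varSigma^{2(k-1)}a=\varSigma^{2(k-2)}a\otimes T_{(k-2)}+f_{(k-2)}$. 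The heart of the matter, and essentially the only place care is needed, is the bookkeeping of tensor slots: one must verify that tensoring $\delta^{0}_{k-1}(\varSigma^{2(k-1)}a)$ on the right by $T_{(k-1)}$ sends the leading term to $[D,a]\otimes T_{(0)}\otimes\cdots\otimes T_{(k-1)}$, promotes the old summand $\varSigma^{2(k-2)}a\otimes[N,T_{(k-2)}]+f_{(k-2)}^{\prime}$ into the $j=2$ term with $Q_{2}=T_{(k-1)}$, shifts every auxiliary block $Q_{j}$ of the level-$(k-1)$ formula into $Q_{j+1}=Q_{j}\otimes T_{(k-1)}$, and that the extra summand $\varSigma^{2(k-1)}a\otimes[N,T_{(k-1)}]+f_{(k-1)}^{\prime}$ enters as the $j=k$ piece. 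Carrying out this re-indexing reproduces exactly the three displayed lines of $(3)$ with $Q_{j}=T_{(k-(j-1))}\otimes\cdots\otimes T_{(k-1)}$; the computation is routine but index-heavy, and is where the bulk of the work lies.
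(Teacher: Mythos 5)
Your proposal is correct and takes essentially the same route as the paper: the paper's proof is precisely an induction on $k$ with Theorem (\ref{final thm}) as the base and inductive step, justified by the stability of Conditions $(A),(B)$ under quantum double suspension from Proposition (\ref{justification for assumption}), together with the observation that $F^{(k)}\varSigma^{2(k-1)}\mathcal{A}\otimes\mathcal{S}\cong\varSigma^{2(k-1)}\mathcal{A}\otimes\mathcal{S}$. Your write-up in fact carries out the tensor-slot bookkeeping for part $(3)$ more explicitly than the paper, which simply asserts that the induction "follows easily."
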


\begin{proof}
Note that $\,F^{(k)}\varSigma^{2(k-1)}\mathcal{A}\otimes\mathcal{S}\cong\varSigma^{2(k-1)}\mathcal{A}\otimes\mathcal{S}\,$ for all $\,k\geq 1$. Now the proof follows easily by induction on $\,k\,$ where Theorem
($\,$\ref{final thm}$\,$) is the basis step of the induction.
\end{proof}

\medskip

\section{Connection, Curvature for The Quantum Double Suspension}

Classical geometric objects like connection, curvature are extended to noncommutative set-up by Connes (\cite{4}) using the calculus $\Omega_D^\bullet$. These notions are meaningful whenever one has a spectral triple. In
this section, we discuss these notions on quantum double suspended spectral triple. We first recall the following necessary definitions from ((\cite{4}), Ch. 6).

Let $\,\mathcal{E}\,$ be a finitely generated projective(f.g.p)
module over $\mathcal{A}$, where $\mathcal{A}$ is a unital $\star$-algebra. We will always consider right modules in this section. Denote $\mathcal{E}^*$ to be the space of $\mathcal{A}$-linear maps from $\mathcal{E}$ to
$\mathcal{A}$. Clearly $\mathcal{E}^*$ is a right $\mathcal{A}$-module.

\begin{definition}
A {\it Hermitian} structure on $\mathcal{E}$ is an $\mathcal{A}$-valued positive-definite sesquilinear mapping 
$\langle \, \, , \, \rangle_{\mathcal{A}} $ such that, 
\begin{enumerate}
\item [(a)] $\langle \xi , \xi' \rangle_{\mathcal{A}}^* = \langle \xi' , \xi \rangle_{\mathcal{A}}\, , \, \, \, \forall \, \xi , \xi' \in \mathcal{E}$.
\item [(b)] $\langle \xi , \xi'. a \rangle_{\mathcal{A}} =  (\langle \xi , \xi' \rangle_{\mathcal{A}}) .a\, , \, \, \, \forall \, \xi , \xi' \in \mathcal{E},\, \, \forall \, a \in \mathcal{A}$.
\item [(c)] The map $\xi \longmapsto \Phi_\xi$ from $\mathcal{E}$ to $\mathcal{E}^*\,$, given by $\Phi_\xi(\eta) = \langle\xi,\eta\rangle_\mathcal{A}\, , \, \forall \eta \in \mathcal{E}\,$, 
gives a conjugate linear $\mathcal{A}$-module isomorphism between $\mathcal{E}$ and $\mathcal{E}^*$. This property will be referred as the self-duality of $\mathcal{E}$.
\end{enumerate}
\end{definition}

Any free $\mathcal{A}$-module $\mathcal{E}_0=\mathcal{A}^q$ has a Hermitian structure on it given by,
\begin{eqnarray*} 
\langle\, \xi , \eta\,\rangle_\mathcal{A} = \sum_{j=1}^q \xi_j^* \eta_j\,\, , \, \, \, \, \forall\, \xi = (\xi_1, \ldots, \xi_q) \, , \, \eta = (\eta_1, \ldots, \eta_q) \in \mathcal{E}_0.
\end{eqnarray*}
We refer it as the canonical Hermitian structure on $\mathcal{A}^q$.

\begin{remark}\label{1st remark}
 It is not known whether every f.g.p module $\mathcal{E}$ over $\mathcal{A}$ has a Hermitian structure on it. However, if we assume that $\mathcal{A}$ is spectrally invariant i,e. $\mathcal{A}$ is dense
subalgebra in a $C^*$-algebra $A$ and stable under holomorphic function calculus, then any f.g.p module $\mathcal{E}$ over $\mathcal{A}$ can be written as $p\mathcal{A}^n$ where $p\in M_n(\mathcal{A})$ is a self-adjoint
idempotent i,e. a projection, and hence has a Hermitian structure on it induced from the canonical structure on $\mathcal{A}^n\, ($Lemma $(2.2)$ of $($\cite{1}$))$.
\end{remark}

\textbf{Assumption~:} Henceforth throughout the section we assume that $\mathcal{A}$ is spectrally invariant subalgebra in a $C^*$-algebra $A$.

\begin{definition}
Let $\mathcal{E}$ be a  {\it Hermitian}, f.g.p module over $\mathcal{A}$. A compatible connection on $\,\mathcal{E}\,$ is a $\, \mathbb{C}$-linear mapping
$\,\nabla : \mathcal{E} \longrightarrow \mathcal{E} \, \otimes _\mathcal{A} \Omega _{D}^1\,$ such that,
\begin{enumerate}
\item[(a)] $\nabla (\xi a) = (\nabla\xi)a + \xi \otimes da, \, \, \, \, \, \forall\, \xi \in \mathcal{E} , a \in \mathcal{A}$;
\item[(b)] $\langle \, \xi , \nabla \eta \, \rangle - \langle \, \nabla \xi , \eta \, \rangle = d\langle \, \xi , \eta \, \rangle_\mathcal{A}\, \, \, \, \, \, \, \forall\, \xi , \eta \in \mathcal{E}\, \, \, \,$(Compatibility).
\end{enumerate}
\end{definition}

The meaning of the last equality in $\Omega_D^1$ is, if $ \nabla(\xi ) = \sum\xi_j\otimes \omega_j $, with $\xi_j \in \mathcal{E}\, , \, \omega_j \in \Omega_D^1(\mathcal{A})$, then
$ \langle \nabla \xi, \eta\rangle = \sum \omega_j^* \langle\xi_j , \eta\rangle_\mathcal{A}$. Existence of compatible connection has been discussed in (\cite{4}). Take $\mathcal{E}=p\mathcal{A}^n$, $p\in M_n(\mathcal{A})$
a projection, with canonical Hermitian structure on it. The map $\nabla_0:\xi\longmapsto p(d\xi_1,\ldots,d\xi_n)$ is a compatible connection on $\mathcal{E}$. It is called the Grassmannian connection. The space of compatible
connections is an affine space over the vector space $Hom_\mathcal{A}(\mathcal{E},\mathcal{E}\otimes _\mathcal{A} \Omega_{D}^1)$ and denoted by $Con(\mathcal{E})$. The connection $\nabla$ extends to a unique linear map
$\, \nabla^\prime : \mathcal{E} \otimes \Omega_D^1 \longrightarrow \mathcal{E} \otimes \Omega_D^2\, $ such that,
\begin{eqnarray*}
\nabla^\prime (\xi \otimes \omega) = (\nabla \xi)\omega + \xi \otimes d\omega, \, \, \, \, \, \, \forall \, \xi \in \mathcal{E}, \, \, \omega \in \Omega_D^1.
\end{eqnarray*}
It can be easily checked that $\nabla^\prime$, defined above, satisfies the Leibniz rule, i,e.
\begin{eqnarray*}
\nabla^\prime(\eta a) = \nabla^\prime(\eta)a - \eta da \, , \, \, \, \, \, \forall \, a \in \mathcal{A}\, ,\eta \in \mathcal{E} \otimes \Omega_D^1\, . 
\end{eqnarray*}
A simple calculation shows that $\varTheta = \nabla^\prime \circ \nabla$ is an element of $Hom_\mathcal{A}(\mathcal{E},\mathcal{E} \otimes_\mathcal{A} \Omega_D^2)$. 

\begin{definition}
For a connection $\nabla$, $\varTheta$ is called the curvature of the connection.
\end{definition}

Throughout this section `$u$' will stand for the rank one projection $|e_0\rangle \langle e_0|=I-l^*l$ in $\mathcal{B}\left(\ell^2(\mathbb{N})\right)$. The map $\, \phi : a \longmapsto a\otimes u$ gives an algebra embedding
of $\mathcal{A}$ in $\varSigma^2 \mathcal{A}$ and hence extends to the map $$\,\widetilde{\phi}:M_q(\mathcal{A})\longrightarrow M_q(\varSigma^2 \mathcal{A})$$ $$\quad\textbf{a}=(a_{ij})\longmapsto (a_{ij}\otimes u)_{ij}$$
By dfinition of projective module, let $\mathcal{E}=p\mathcal{A}^n$ for some natural number $n$ and an idempotent $p \in M_n(\mathcal{A})\, $. For $p=(p_{ij})_{ij}$, if we denote the matrix $(p_{ij}\otimes u)_{ij}$ by $p\otimes u$, then $\widetilde{\phi}$ gives
a f.g.p right $\varSigma^2 \mathcal{A}$-module $\widetilde {\mathcal{E}} = (p\otimes u)(\varSigma^2 \mathcal{A})^n$. However, note that $\widetilde {\mathcal{E}} = (p\otimes u)(\varSigma^2 \mathcal{A})^n$ is same as
$(p\otimes u)(\mathcal{A}\otimes \mathcal{S})^n$ because $u$ is a rank one projection operator. We recall Theorem ($3.3$) from (\cite{1}).

\begin{theorem}[\cite{1}]\label{theorem of CG}
Let $\mathcal{E}$ be a f.g.p $\mathcal{A}$-module with a Hermitian structure where $\mathcal{A}$ is spectrally invariant subalgebra in a $C^*$-algebra. Then we can have a self-adjoint idempotent $p\in M_n(\mathcal{A})$ such
that $\mathcal{E} = p\mathcal{A}^n$ and $\mathcal{E}$ has the induced canonical Hermitian structure.
\end{theorem}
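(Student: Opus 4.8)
The plan is to reduce the module to the standard form $p\mathcal{A}^n$ with $p$ a projection, and then to absorb the difference between the given Hermitian structure and the canonical one into a positive invertible automorphism of the module. Since $\mathcal{E}$ is finitely generated projective over the spectrally invariant algebra $\mathcal{A}$, Lemma $(2.2)$ of \cite{1} --- recalled in Remark \ref{1st remark} --- already furnishes a natural number $n$, a projection $p\in M_n(\mathcal{A})$, and a right $\mathcal{A}$-module isomorphism $\mathcal{E}\cong p\mathcal{A}^n$. Transporting $\langle\,,\,\rangle_{\mathcal{A}}$ along this isomorphism, we may assume $\mathcal{E}=p\mathcal{A}^n$; we keep the notation $\langle\,,\,\rangle_{\mathcal{A}}$ for the transported structure and write $\langle\,,\,\rangle_0$ for the canonical one, obtained by restricting $\langle\xi,\eta\rangle=\sum_j\xi_j^*\eta_j$ from $\mathcal{A}^n$. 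Since $p=p^*$ gives the orthogonal splitting $\mathcal{A}^n=p\mathcal{A}^n\oplus(1-p)\mathcal{A}^n$, the form $\langle\,,\,\rangle_0$ is indeed a Hermitian structure (self-duality descends to orthogonal direct summands), and $\langle\,,\,\rangle_{\mathcal{A}}$ is one by transport. It therefore suffices to produce an $\mathcal{A}$-linear automorphism $U$ of $p\mathcal{A}^n$ with $\langle U\xi,U\eta\rangle_0=\langle\xi,\eta\rangle_{\mathcal{A}}$ for all $\xi,\eta$.

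Next I would compare the two structures through their self-duality isomorphisms. Let $\Phi,\Phi^0:\mathcal{E}\to\mathcal{E}^*$ be the conjugate-linear module isomorphisms with $\Phi_\xi(\eta)=\langle\xi,\eta\rangle_{\mathcal{A}}$ and $\Phi^0_\xi(\eta)=\langle\xi,\eta\rangle_0$, and set $h:=(\Phi^0)^{-1}\circ\Phi$, a genuine element of $\mathcal{E}nd_{\mathcal{A}}(\mathcal{E})$ (a composite of two conjugate-linear maps). Unwinding the definitions gives
\begin{center}
$\langle\xi,h\eta\rangle_0=\langle\xi,\eta\rangle_{\mathcal{A}}$ \ for all $\xi,\eta\in\mathcal{E}$,
\end{center}
from which one reads off that $h$ is self-adjoint for $\langle\,,\,\rangle_0$, that $\langle\xi,h\xi\rangle_0=\langle\xi,\xi\rangle_{\mathcal{A}}\geq 0$, and --- since $\Phi,\Phi^0$ are bijective --- that $h$ is invertible in $\mathcal{E}nd_{\mathcal{A}}(\mathcal{E})$.

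Finally I would take a square root. Identify $\mathcal{E}nd_{\mathcal{A}}(\mathcal{E})=pM_n(\mathcal{A})p$, a spectrally invariant subalgebra of the $C^*$-algebra $pM_n(A)p$; under this identification $h$ is a positive invertible element, so its spectrum lies in $(0,\infty)$, and the square-root function is holomorphic on a neighbourhood of the spectrum avoiding $0$. Holomorphic functional calculus therefore produces a positive invertible $h^{1/2}\in pM_n(\mathcal{A})p=\mathcal{E}nd_{\mathcal{A}}(\mathcal{E})$. Putting $U\xi:=h^{1/2}\xi$, the map $U$ is an $\mathcal{A}$-linear automorphism of $\mathcal{E}$, and since $h^{1/2}$ is $\langle\,,\,\rangle_0$-self-adjoint,
\begin{center}
$\langle U\xi,U\eta\rangle_0=\langle h^{1/2}\xi,h^{1/2}\eta\rangle_0=\langle\xi,h\eta\rangle_0=\langle\xi,\eta\rangle_{\mathcal{A}}$.
\end{center}
Thus $U$ is an isometric isomorphism of $(\mathcal{E},\langle\,,\,\rangle_{\mathcal{A}})$ onto $(p\mathcal{A}^n,\langle\,,\,\rangle_0)$, which is precisely the assertion.

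The main obstacle, as is typical for such statements, is the interface between the purely algebraic notion of positivity (a ``positive-definite sesquilinear form'') and $C^*$-algebraic positivity: one has to verify that $h$, positive as an operator for $\langle\,,\,\rangle_0$, is genuinely a positive element of the corner $pM_n(A)p$, and that this corner is itself spectrally invariant in the required sense --- this is what guarantees both that the module-invertible $h$ is invertible in the $C^*$-closure and that the Cauchy-integral square root $h^{1/2}$ falls back inside $pM_n(\mathcal{A})p$. The reduction of the first paragraph, although classical (``every idempotent in a $C^*$-algebra is similar to a projection''), likewise must be carried out inside the spectrally invariant subalgebra rather than the ambient $C^*$-algebra; everything after these points is routine bookkeeping with conjugate-linear maps.
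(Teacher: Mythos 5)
The paper does not actually prove this statement: it is imported verbatim as Theorem $(3.3)$ of \cite{1}, so there is no in-paper argument to compare yours against. Judged on its own, your proposal is the standard and essentially correct proof: reduce to $\mathcal{E}=p\mathcal{A}^n$ with $p=p^*=p^2$ via Lemma $(2.2)$ of \cite{1} (Remark \ref{1st remark}), encode the discrepancy between the given structure and the canonical one in the positive invertible ``metric'' operator $h=(\Phi^0)^{-1}\circ\Phi\in\mathcal{E}nd_{\mathcal{A}}(\mathcal{E})\cong pM_n(\mathcal{A})p$, and absorb it by the holomorphic-functional-calculus square root $h^{1/2}$, which stays in the spectrally invariant corner. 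The algebra of conjugate-linear self-duality maps is handled correctly, and the identity $\langle U\xi,U\eta\rangle_0=\langle\xi,h\eta\rangle_0=\langle\xi,\eta\rangle_{\mathcal{A}}$ does deliver the claim.

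Two steps that you flag but do not carry out deserve to be written down if this is to be a complete proof. First, the passage from ``$\langle\xi,h\xi\rangle_0=\langle\xi,\xi\rangle_{\mathcal{A}}\geq 0$ for all $\xi$'' to ``$h\geq 0$ in the $C^*$-algebra $pM_n(A)p$'' is the standard Hilbert-module lemma: writing $h=h_+-h_-$ and testing against $\xi=h_-\eta$ forces $h_-^3=0$, hence $h_-=0$; this requires knowing that positive-definiteness in the definition of a Hermitian structure means positivity in the ambient $C^*$-algebra $A$, which is the intended reading here. Second, you should record why $pM_n(\mathcal{A})p$ is itself stable under holomorphic functional calculus inside $pM_n(A)p$ (spectral invariance passes to matrix algebras and to corners by a unital idempotent), since that is exactly what returns $h^{1/2}$ to $\mathcal{E}nd_{\mathcal{A}}(\mathcal{E})$ rather than merely to its $C^*$-completion. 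With those two standard verifications supplied, the argument is complete.
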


Goal of this section is to prove the following theorem.
\begin{theorem}\label{extension thm}
Let $\mathcal{E}$ be a f.g.p. module over $\mathcal{A}$ equipped with a Hermitian structure $\langle\,\,,\,\rangle_\mathcal{A}$. Choose a projection $p \in M_n(\mathcal{A})$ such that $\mathcal{E}= p\mathcal{A}^n$ and
$\mathcal{E}$ has the induced canonical Hermitian structure. Let $\widetilde{\mathcal{E}} = (p\otimes u)(\varSigma^2\mathcal{A})^n$ and restrict the canonical structure on $(\varSigma^2\mathcal{A})^n$ to
$\widetilde{\mathcal{E}}$. We have an one-one affine morphism $\widetilde\phi_{con}:Con(\mathcal E)\longrightarrow Con(\widetilde{\mathcal{E}})$ which preserves the Grassmannian conections, and an one-one
$\mathbb{C}$-linear map $\,\psi:Hom_\mathcal{A}\left(\mathcal{E}, \mathcal{E}\otimes_\mathcal{A}\Omega_D^2(\mathcal{A})\right)\longrightarrow Hom_{\varSigma^2\mathcal{A}}\left(\widetilde{\mathcal{E}},\widetilde{\mathcal{E}}
\otimes_{\varSigma^2\mathcal{A}}\Omega_{\varSigma^2D}^2(\varSigma^2\mathcal{A})\right)$ such that the following diagram
\begin{center}
\begin{tikzpicture}[node distance=3cm,auto]
\node (Up)[label=above:$\widetilde\phi_{con}$]{};
\node (A)[node distance=3.5cm,left of=Up]{$Con(\mathcal{E})$};
\node (B)[node distance=4cm,right of=Up]{$Con(\widetilde{\mathcal{E}})$};
\node (Down)[node distance=1.5cm,below of=Up, label=below:$\psi$]{};
\node(C)[node distance=3.5cm,left of=Down]{$Hom_\mathcal{A}\left(\mathcal{E},\mathcal{E}\otimes_\mathcal{A}\Omega_D^2(\mathcal{A})\right)$};
\node(D)[node distance=4cm,right of=Down]{$Hom_{\varSigma^2\mathcal{A}}\left(\widetilde{\mathcal{E}},\widetilde{\mathcal{E}}\otimes_{\varSigma^2\mathcal{A}}\Omega_{\varSigma^2D}^2(\varSigma^2\mathcal{A})\right)$};
\draw[->](A) to (B);
\draw[->](C) to (D);
\draw[->](B)to node{{ $f$}}(D);
\draw[->](A)to node[swap]{{ $f$}}(C);
\end{tikzpicture}
\end{center}
commutes. Here $f$ is the map which sends any compatible connection to its associated curvature.
\end{theorem}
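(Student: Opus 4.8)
The plan is to reduce the theorem to one structural fact about the algebra embedding $\phi\colon a\mapsto a\otimes u$, after which everything becomes a formal manipulation with matrices over a differential graded algebra. First I would show that $\phi$ extends to an injective homomorphism of differential graded algebras $\phi_*\colon\Omega_D^\bullet(\mathcal{A})\to\Omega_{\varSigma^2D}^\bullet(\varSigma^2\mathcal{A})$, $\phi_*(\omega)=\omega\otimes u$, with image $\Omega_D^\bullet(\mathcal{A})\otimes u\subseteq\Omega_D^\bullet(\mathcal{A})\otimes\mathcal{S}$. Functoriality of the universal calculus produces a DGA map $\phi_\#\colon\Omega^\bullet(\mathcal{A})\to\Omega^\bullet(\varSigma^2\mathcal{A})$; since $[\varSigma^2D,a\otimes u]=[D,a]\otimes u$ (because $[N,u]=0$) and $u^k=u$, one obtains $\pi_{\varSigma^2D}\circ\phi_\#=\bigl(\pi_D(\,\cdot\,)\bigr)\otimes u$, whence $\phi_\#(J_0^k(\mathcal{A}))\subseteq J_0^k(\varSigma^2\mathcal{A})$ and, using that $\phi_\#$ commutes with $d$, also $\phi_\#(J^\bullet(\mathcal{A}))\subseteq J^\bullet(\varSigma^2\mathcal{A})$; so $\phi_*$ is well defined, and Theorem \ref{final thm} gives injectivity and the description of the image (in every degree $\geq 2$, and in the first summand in degree $1$, the class $\omega\otimes u$ vanishes iff $\omega=0$, as $u\neq0$).

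Granting this, I would apply $\phi_*$ entrywise to get a DGA homomorphism $\widetilde\phi_*\colon M_n(\Omega_D^\bullet(\mathcal{A}))\to M_n(\Omega_{\varSigma^2D}^\bullet(\varSigma^2\mathcal{A}))$ with $\widetilde\phi_*(p)=p\otimes u$, the self-adjoint idempotent defining $\widetilde{\mathcal{E}}$. Under the standard identifications $\mathrm{Hom}_\mathcal{A}(\mathcal{E},\mathcal{E}\otimes_\mathcal{A}\Omega_D^k(\mathcal{A}))\cong pM_n(\Omega_D^k(\mathcal{A}))p$ and $\mathrm{Hom}_{\varSigma^2\mathcal{A}}(\widetilde{\mathcal{E}},\widetilde{\mathcal{E}}\otimes_{\varSigma^2\mathcal{A}}\Omega_{\varSigma^2D}^k(\varSigma^2\mathcal{A}))\cong(p\otimes u)M_n(\Omega_{\varSigma^2D}^k(\varSigma^2\mathcal{A}))(p\otimes u)$, for $X\in pM_n(\Omega_D^k)p$ one has $\widetilde\phi_*(X)=(p\otimes u)\,\widetilde\phi_*(X)\,(p\otimes u)$, so $\widetilde\phi_*$ restricts to injective $\mathbb{C}$-linear maps in each degree; the case $k=2$ is the required $\psi$, and I write $\psi_1$ for the case $k=1$. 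Then I would define $\widetilde\phi_{con}$ by requiring it to send the Grassmannian connection $\nabla_0$ of $\mathcal{E}$ to that, $\widetilde\nabla_0$, of $\widetilde{\mathcal{E}}$, and to be affine over $\psi_1$, i.e. $\widetilde\phi_{con}(\nabla_0+\omega)=\widetilde\nabla_0+\psi_1(\omega)$. Injectivity, affineness and preservation of the Grassmannian connection are then immediate; that $\widetilde\phi_{con}(\nabla)$ is again compatible follows because $\widetilde\nabla_0$ is compatible with the restricted canonical structure and $\widetilde\phi_*$ intertwines the $*$-operations on forms (as $u^*=u$ and $\phi_*$ is a $*$-map on $\pi$-representatives), so it carries a perturbation satisfying the self-adjointness condition of compatibility to one again satisfying it.

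For the commutativity of the square, recall that for $\nabla=\nabla_0+\omega$ on $\mathcal{E}=p\mathcal{A}^n$ the curvature is the universal expression $\varTheta_\nabla=p\,dp\,dp+p\,(d\omega)\,p+\omega^2$ in the DGA $M_n(\Omega_D^\bullet(\mathcal{A}))$ — built only from $p$, $dp$, $\omega$, $d\omega$ by the product and the differential — with the analogous formula on $\widetilde{\mathcal{E}}$ in terms of $p\otimes u$ and $\delta$. Since $\widetilde\phi_*$ is a DGA homomorphism we have $\widetilde\phi_*(p)=p\otimes u$, $\widetilde\phi_*(\omega)=\psi_1(\omega)$, $\widetilde\phi_*(d\omega)=\delta\psi_1(\omega)$, and, crucially, $\widetilde\phi_*(dp)$ equals $\delta(p\otimes u)$ taken entrywise: by Theorem \ref{final thm}(3), $\delta^0(p_{ij}\otimes u)=[D,p_{ij}]\otimes u\oplus\bigl(p_{ij}\otimes[N,u]\bigr)=[D,p_{ij}]\otimes u=\phi_*(dp_{ij})$, the vanishing of $[N,u]$ being exactly what keeps this in the summand $\Omega_D^1\otimes\mathcal{S}$ rather than producing a $\varSigma^2\mathcal{A}$-component. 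Feeding these into the curvature formula yields $\varTheta_{\widetilde\phi_{con}(\nabla)}=\widetilde\phi_*(\varTheta_\nabla)=\psi(\varTheta_\nabla)$, i.e. $f\circ\widetilde\phi_{con}=\psi\circ f$, which is the commutativity of the diagram.

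The hard part is the first step — verifying that $\phi_\#$ descends to the Connes' quotients and pinning down its image — since that is where the structure theory of the previous section (Theorem \ref{final thm}, and behind it Lemmas \ref{containment of top degree} and \ref{the denominators}) must be invoked. Once the identity $\pi_{\varSigma^2D}(\phi_\#\omega)=\pi_D(\omega)\otimes u$ and Theorem \ref{final thm} are in hand, the remaining steps are routine matrix algebra over a differential graded algebra, the sole geometric ingredient being the computation $[N,u]=0$.
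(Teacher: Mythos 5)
Your argument is essentially correct, but it takes a genuinely different route from the paper. The paper never builds a morphism of calculi: it writes down $\widetilde\nabla$ explicitly (Proposition \ref{extended connection}), identifies $(p\otimes u)(\mathcal{A}\otimes\mathcal{S})^n$ with $p\mathcal{A}^n\otimes u\mathcal{S}$ (Lemmas \ref{imp iso 0}, \ref{imp iso}), and then checks the Leibniz rule, preservation of the Grassmannian connection (Lemma \ref{preservation of Grassmannian}), compatibility (via explicit computations with the induced Hermitian structure, Lemma \ref{useful in next lemma} and Lemma \ref{extended compatibility}), and the curvature formula (Proposition \ref{extended curvature}) by hand, the commutativity of the square then being read off from that formula. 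You instead isolate one structural fact --- that $\omega\mapsto\omega\otimes u$ defines an injective morphism of DGAs $\Omega_D^\bullet(\mathcal{A})\to\Omega^\bullet_{\varSigma^2 D}(\varSigma^2\mathcal{A})$ --- amplify it to matrices, and use the standard matrix description of connections and curvature on $p\mathcal{A}^n$, so that $f\circ\widetilde\phi_{con}=\psi\circ f$ becomes the statement that a DGA morphism applied to the universal curvature expression $p\,dp\,dp+p(d\omega)p+\omega^2$ gives the corresponding expression for $p\otimes u$; the only computation left is $[N,u]=0$. Your $\widetilde\phi_{con}$ (affine over $\psi_1$, matching Grassmannians) and your $\psi$ coincide with the paper's maps, so nothing is lost; what your route buys is that the paper's longest verifications (Proposition \ref{extended curvature} and Lemma \ref{extended compatibility}) become formal. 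Your appeal to the self-adjointness characterization of compatible perturbations of the Grassmannian connection for the canonical Hermitian structure (on $p\mathcal{A}^n$ and on $(p\otimes u)(\varSigma^2\mathcal{A})^n$, both idempotents being self-adjoint) is legitimate but is doing the work of the paper's Hermitian-structure lemmas and deserves a line of justification.

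One step needs repair as stated. The embedding $\phi:a\mapsto a\otimes u$ is not unital ($\phi(1)=1\otimes u$), and the reduced universal calculus $\Omega^k(\mathcal{A})=\mathcal{A}\otimes\bar{\mathcal{A}}^{\otimes k}$ is functorial only for unital homomorphisms: a map $\phi_\#$ sending $a_0da_1\cdots da_k$ to $\phi(a_0)d\phi(a_1)\cdots d\phi(a_k)$ must annihilate the classes with some $a_i\in\mathbb{C}1$, and $d(1\otimes u)\neq 0$ in $\Omega^1(\varSigma^2\mathcal{A})$, so your $\phi_\#$ is not well defined on $\Omega^\bullet(\mathcal{A})$. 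The conclusion survives because $1\otimes u$ becomes closed only after passing to the Connes quotient: by Theorem \ref{final thm}(3), $\delta^0(1\otimes u)=[D,1]\otimes u\oplus 1\otimes[N,u]=0$. So either observe that the composite of your prescription with the projection onto $\Omega^\bullet_{\varSigma^2 D}(\varSigma^2\mathcal{A})$ is well defined for this reason, or define $\phi_*$ directly on $\pi$-representatives: the class of $\sum a_0[D,a_1]\cdots[D,a_k]$ goes to the class of $\sum a_0[D,a_1]\cdots[D,a_k]\otimes u$, which is well defined because $\pi_D(\omega)=0$ forces the image representative to vanish, and because $\pi\left(dJ_0^{k-1}(\mathcal{A})\right)\otimes u\subseteq\pi\left(dJ_0^{k-1}(\varSigma^2\mathcal{A})\right)$ --- the same argument as Lemma \ref{containment of top degree}, with $u$ in place of the identity block $I_{(k)}$, using $[\varSigma^2D,a\otimes u]=[D,a]\otimes u$. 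With this adjustment, injectivity and the identification of the image do follow from Theorem \ref{final thm}, and the rest of your proof (matrix amplification, $\widetilde\phi_*(dp)=\delta(p\otimes u)$ entrywise, and $\varTheta_{\widetilde\phi_{con}(\nabla)}=\widetilde\phi_*(\varTheta_\nabla)=\psi(\varTheta_\nabla)$) goes through.
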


\begin{remark}
Choice of such a projection $p\in M_n(\mathcal{A})$ of Theorem $(\,\ref{theorem of CG}\,)$, such that $\mathcal{E}=p\mathcal{A}^n$, has the advantage that now we have to deal with the canonical Hermitian structure, which is
much easier to tackle as compared to arbitrary Hermitian structure. This is one of the main reason for our assumption that $\mathcal{A}$ is spectrally invariant subalgebra in a $C^*$-algebra because this assumption is
crucial for Theorem $(\,\ref{theorem of CG}\,)$ to hold.
\end{remark}

We break the proof of this theorem into several lemmas and propositions to make it transparent and then combine them together at the end. 
\begin{lemma}\label{imp iso 0}
As right $\varSigma^2 \mathcal{A}$ module,
\begin{eqnarray*}
(p\otimes u)(\Omega_D^1\otimes \mathcal{S})^n \cong p(\Omega_D^1)^n\otimes u\mathcal{S}
\end{eqnarray*}
\end{lemma}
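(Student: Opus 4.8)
The plan is to use that $u=|e_0\rangle\langle e_0|$ is a rank one projection, exactly in the spirit of the identification $\widetilde{\mathcal{E}}=(p\otimes u)(\varSigma^2\mathcal{A})^n=(p\otimes u)(\mathcal{A}\otimes\mathcal{S})^n$ recorded just before Theorem~\ref{theorem of CG}, with the role of $\mathcal{A}$ (viewed as a left $\mathcal{A}$-module) now played by the bimodule $\Omega_D^1(\mathcal{A})$. First I would rewrite $(\Omega_D^1\otimes\mathcal{S})^n$ as $(\Omega_D^1)^n\otimes\mathcal{S}$ --- the $n$-fold direct sum of a $\mathbb{C}$-tensor product is the tensor product of the $n$-fold sum with the second factor --- and note that under this identification left multiplication by the idempotent matrix $p\otimes u\in M_n(\varSigma^2\mathcal{A})$ becomes the operator $P\otimes u$, where $P$ denotes left multiplication by $p\in M_n(\mathcal{A})$ on $(\Omega_D^1)^n$ (using the left $\mathcal{A}$-module structure of $\Omega_D^1$) and $u$ denotes left multiplication by $e_{00}$ on $\mathcal{S}$. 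This step invokes the description of the left action of $\mathcal{A}\otimes\mathcal{S}$ on the summand $\Omega_D^1(\mathcal{A})\otimes\mathcal{S}$ of $\Omega_{\varSigma^2D}^1$ coming from Lemma~\ref{n forms involving A,S}, namely $(b\otimes T)(\omega\otimes S)=b\omega\otimes TS$.

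Next I would compute the range of $P\otimes u$. Both $P$ and $u$ are idempotent $\mathbb{C}$-linear maps with images $p(\Omega_D^1)^n$ and $u\mathcal{S}$. Writing an arbitrary element of $(\Omega_D^1)^n\otimes\mathcal{S}$ as a finite sum $\sum_k\nu_k\otimes S_k$ with $\nu_k\in(\Omega_D^1)^n$ and $S_k\in\mathcal{S}$, one gets $(p\otimes u)\bigl(\sum_k\nu_k\otimes S_k\bigr)=\sum_k p\nu_k\otimes uS_k\in p(\Omega_D^1)^n\otimes u\mathcal{S}$; conversely every element of $p(\Omega_D^1)^n\otimes u\mathcal{S}$ is fixed by $p\otimes u$ and hence lies in the range. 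Thus $(p\otimes u)(\Omega_D^1\otimes\mathcal{S})^n=p(\Omega_D^1)^n\otimes u\mathcal{S}$ as $\mathbb{C}$-vector spaces. Since left multiplication by a matrix over $\varSigma^2\mathcal{A}$ commutes with the right $\varSigma^2\mathcal{A}$-action on $(\Omega_D^1\otimes\mathcal{S})^n$, and $p(\Omega_D^1)^n\otimes u\mathcal{S}$ carries the right $\varSigma^2\mathcal{A}$-module structure restricted from $(\Omega_D^1\otimes\mathcal{S})^n$, this equality is one of right $\varSigma^2\mathcal{A}$-modules, which is the assertion.

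I do not expect a genuine obstacle here; the one point demanding care is simply to keep straight that both sides are literally the same subset of $(\Omega_D^1)^n\otimes\mathcal{S}$ endowed with the restricted right action, so that no separate compatibility check is needed --- the same bookkeeping that underlies $\widetilde{\mathcal{E}}=p\mathcal{A}^n\otimes u\mathcal{S}$, now with $\Omega_D^1(\mathcal{A})$ in place of $\mathcal{A}$.
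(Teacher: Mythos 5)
Your argument is correct and is essentially the paper's own proof in different clothing: the paper writes down the mutually inverse maps $\Phi\colon p(\omega_1,\ldots,\omega_n)\otimes uT\mapsto (p\otimes u)(\omega_1\otimes T,\ldots,\omega_n\otimes T)$ and $\Psi\colon (p\otimes u)(\omega_1\otimes T_1,\ldots,\omega_n\otimes T_n)\mapsto\sum_i p(0,\ldots,\omega_i,\ldots,0)\otimes uT_i$ and calls the rest routine, and your $\Psi$ is exactly the identification $(\Omega_D^1\otimes\mathcal{S})^n\cong(\Omega_D^1)^n\otimes\mathcal{S}$ followed by the range computation for the idempotent $P\otimes u$. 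Your phrasing via the range of an idempotent even sidesteps the small well-definedness check that $\Phi$ would otherwise require, so nothing is missing.
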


\begin{proof}
We define $$\Phi : p(\Omega_D^1)^n\otimes u\mathcal{S} \longrightarrow (p\otimes u)(\Omega_D^1\otimes \mathcal{S})^n$$ $$\quad\quad\quad\, p(\omega_1,\ldots,\omega_n)\otimes uT \longmapsto (p\otimes u)(\omega_1\otimes T,
\ldots,\omega_n\otimes T)$$ and $$\Psi : (p\otimes u)(\Omega_D^1\otimes \mathcal{S})^n\longrightarrow p(\Omega_D^1)^n\otimes u\mathcal{S}$$ $$\quad\quad\quad(p\otimes u)(\omega_1\otimes T_1,\ldots,\omega_n\otimes T_n)
\longmapsto \sum_{i=1}^n p(0,\ldots,\omega_i,\ldots,0)\otimes uT_i\,.$$ Proof is now routine verification.
\end{proof}

\begin{lemma}\label{imp iso}
$(p\otimes u)(\mathcal{A}\otimes \mathcal{S})^n \cong p\mathcal{A}^n\otimes u\mathcal{S}\, \, $ as right $\, \varSigma^2 \mathcal{A}$-module.
\end{lemma}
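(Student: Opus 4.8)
The statement is the analogue of Lemma~\ref{imp iso 0} with the coefficient module $\Omega_D^1(\mathcal{A})$ replaced by $\mathcal{A}$ itself, and the plan is to prove it the same way, by exhibiting a pair of mutually inverse right $\varSigma^2\mathcal{A}$-module maps. Recall that $\widetilde{\mathcal{E}}=(p\otimes u)(\varSigma^2\mathcal{A})^n=(p\otimes u)(\mathcal{A}\otimes\mathcal{S})^n$ is a submodule of the free module $(\varSigma^2\mathcal{A})^n$, hence carries a canonical right $\varSigma^2\mathcal{A}$-action; on $p\mathcal{A}^n\otimes u\mathcal{S}$ we use the action in which a generator $a\otimes S$ of $\varSigma^2\mathcal{A}$ acts by right multiplication by $a$ on the first tensor factor and by $S$ on the second, while $f\in\mathbb{C}[z,z^{-1}]$ acts trivially on the first factor and by right multiplication by $\sigma^\prime(f)$ on the second.

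Concretely I would set
\[
\Phi : p\mathcal{A}^n\otimes u\mathcal{S} \longrightarrow (p\otimes u)(\mathcal{A}\otimes \mathcal{S})^n, \qquad p(\xi_1,\ldots,\xi_n)\otimes uT \longmapsto (p\otimes u)(\xi_1\otimes T,\ldots,\xi_n\otimes T),
\]
\[
\Psi : (p\otimes u)(\mathcal{A}\otimes \mathcal{S})^n \longrightarrow p\mathcal{A}^n\otimes u\mathcal{S}, \qquad (p\otimes u)(\xi_1\otimes T_1,\ldots,\xi_n\otimes T_n) \longmapsto \sum_{i=1}^n p(0,\ldots,\xi_i,\ldots,0)\otimes uT_i,
\]
extended $\mathbb{C}$-linearly. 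The first thing to verify is that $\Psi$ is well defined, i.e. insensitive to the chosen representative of an element of $(p\otimes u)(\mathcal{A}\otimes\mathcal{S})^n$; this uses that $p\otimes u$ (equivalently $p$ and the rank-one projection $u$) is idempotent, exactly as in Lemma~\ref{imp iso 0}. One then checks $\Psi\circ\Phi=\mathrm{id}$ and $\Phi\circ\Psi=\mathrm{id}$ by direct substitution.

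It remains to check that $\Phi$ is right $\varSigma^2\mathcal{A}$-linear. For the $\mathcal{A}\otimes\mathcal{S}$-part of $\varSigma^2\mathcal{A}$ this is immediate from the definitions. The only point requiring attention is the action of $f\in\mathbb{C}[z,z^{-1}]$, identified with $1\otimes\sigma^\prime(f)$: one must know that $(p\otimes u)(\mathcal{A}\otimes\mathcal{S})^n$ is stable under right multiplication by this element and that it corresponds under $\Psi$ to $p\xi\otimes u(T\sigma^\prime(f))$. This is where the rank-one-ness of $u$ enters: since $u$ projects onto $\mathbb{C}e_0$ we have $u\,l^{*m}=0$ for $m\geq 1$ and $u\,l^{m}=|e_0\rangle\langle e_m|$, so $u\,\sigma^\prime(f)\in\mathcal{S}$ for a Laurent polynomial $f$, whence $u\mathcal{S}\cdot\sigma^\prime(f)\subseteq u\mathcal{S}$ and all products stay inside the spaces in question. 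Compatibility of this action with products in $\varSigma^2\mathcal{A}$ is then automatic, because the action on $p\mathcal{A}^n\otimes u\mathcal{S}$ we wrote down is precisely the one transported from the submodule structure of $\widetilde{\mathcal{E}}\subseteq(\varSigma^2\mathcal{A})^n$. I do not foresee any genuine obstacle; the only mildly delicate bookkeeping is tracking the circle-direction ($\mathbb{C}[z,z^{-1}]$) part of the module structure, and that is settled by the observation $u\,\sigma^\prime(f)\in\mathcal{S}$.
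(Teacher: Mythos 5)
Your proposal is correct and follows exactly the route the paper takes: the paper's proof of this lemma is simply the remark that the maps $\Phi$ and $\Psi$ of Lemma~\ref{imp iso 0} work verbatim with $\Omega_D^1(\mathcal{A})$ replaced by $\mathcal{A}$. Your additional care with the $\mathbb{C}[z,z^{-1}]$-part of the right action (via $u\,\sigma^\prime(f)\in\mathcal{S}$) is a sound elaboration of the same argument, not a different one.
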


\begin{proof}
Exact similar description of $\Phi,\Psi$ of previous Lemma ($\,$\ref{imp iso 0}$\,$) gives the isomorphism.
\end{proof}
\medskip

\textbf{Notation}~: Henceforth throughout the article $\delta(T)=[N,T]$ for all $T\in \mathcal{S}$ and $\underbrace{(0,\ldots,a_i,\ldots,0)}_{n\,\,tuple}$ will denote the element of $\mathcal{A}^n$ with $a_i\in \mathcal{A}$
at the $i$-th co-ordinate and zero elsewhere.
\medskip

\begin{proposition}\label{extended connection}
Let $\, \nabla : \mathcal{E} \longrightarrow \mathcal{E}\otimes_{\mathcal{A}} \Omega_D^1(\mathcal{A})$ be a connection where $\mathcal{E} = p\mathcal{A}^n$. Define,
$$\widetilde \nabla : (p\otimes u)(\mathcal{A}\otimes \mathcal{S})^n \longrightarrow (p\otimes u)(\mathcal{A}\otimes \mathcal{S})^n \otimes_{\varSigma^2 \mathcal{A}} \Omega_{\varSigma^2 D}^1(\varSigma^2 \mathcal{A})$$
by the rule,
\begin{eqnarray*}
(p\otimes u)(a_1\otimes T_1,\ldots,a_n\otimes T_n)\, \longmapsto &  & \sum_{i=1}^n \nabla(p(0,\ldots,a_i,\ldots,0))\otimes uT_i\\
                                                               &  & \,\,\,\,+ (p\otimes u)\left(a_1\otimes \delta(uT_1),\ldots,a_n\otimes \delta(uT_n)\right)
\end{eqnarray*}
where $\delta(T)=[N,T]$. Then $\widetilde \nabla$ defines a connection on $\widetilde {\mathcal{E}}=(p\otimes u)(\varSigma^2\mathcal{A})^n$.
\end{proposition}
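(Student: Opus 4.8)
The plan is to verify directly that $\widetilde\nabla$ is well-defined and satisfies the two axioms of a connection on the f.g.p.\ $\varSigma^2\mathcal{A}$-module $\widetilde{\mathcal{E}}$. First I would observe that by Lemma~(\ref{imp iso}) we may as well work with the isomorphic picture $\widetilde{\mathcal{E}}\cong p\mathcal{A}^n\otimes u\mathcal{S}$, and by Lemma~(\ref{imp iso 0}) the target of $\widetilde\nabla$ is $p(\Omega_D^1)^n\otimes u\mathcal{S}$, together with whatever component comes from $\Omega_{\varSigma^2D}^1$ being larger than $\Omega_D^1(\mathcal{A})\otimes\mathcal{S}$; by Theorem~(\ref{final thm})(1) we know $\Omega_{\varSigma^2D}^1(\varSigma^2\mathcal{A})\cong\Omega_D^1(\mathcal{A})\otimes\mathcal{S}\oplus\varSigma^2\mathcal{A}$, and the two summands in the formula for $\widetilde\nabla$ land in these two pieces respectively (the $\nabla(p(0,\dots,a_i,\dots,0))\otimes uT_i$ term in the first, and the $\delta(uT_i)$ term, which is $a\otimes[N,\cdot]$-type, in the second). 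The first task is therefore well-definedness: one must check that the right-hand side depends only on the element $(p\otimes u)(a_1\otimes T_1,\dots,a_n\otimes T_n)$ of $\widetilde{\mathcal{E}}$ and not on the chosen representative. This reduces, exactly as in the proof of Lemma~(\ref{imp iso 0}), to checking compatibility with the projection $p\otimes u$ and with the relation $a\otimes T=\sum_j(a\otimes e_{ij})(1\otimes e_{jk})\cdots$ used to move scalars across $\otimes$; the key point is that $\nabla$ is already $\mathbb{C}$-linear and the formula is built $\mathcal{A}$-slotwise so it respects $p$.

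Next I would verify the Leibniz rule $\widetilde\nabla(\xi\cdot b)=(\widetilde\nabla\xi)\cdot b+\xi\otimes\delta^0(b)$ for $\xi\in\widetilde{\mathcal{E}}$ and $b\in\varSigma^2\mathcal{A}$. Using the vector-space splitting $\varSigma^2\mathcal{A}=\mathcal{A}\otimes\mathcal{S}\oplus\mathbb{C}[z,z^{-1}]$ it suffices to treat $b=a\otimes T$ and $b=f$ separately. For $b=a'\otimes T'$ one expands $(a_i\otimes T_i)(a'\otimes T')=a_ia'\otimes T_iT'$, applies the Leibniz rule for the original $\nabla$ in the first summand and the derivation property of $\delta=[N,-]$ (a genuine derivation on $\mathcal{S}$, since $N$ implements it) in the second summand, and matches terms against $\delta^0(a'\otimes T')=[D,a']\otimes T'\oplus(a'\otimes[N,T']+0)$ from Theorem~(\ref{final thm})(3); here I expect to use condition~(B) (that $[D,\mathcal{A}]\subseteq\mathcal{A}\otimes\mathcal{E}nd_\mathcal{A}(\mathcal{H}^\infty)$) only implicitly through the already-established structure of $\Omega_{\varSigma^2D}^1$. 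For $b=f\in\mathbb{C}[z,z^{-1}]$, multiplication by $f$ affects only the $\mathcal{S}$-slot via $1\otimes\sigma'(f)$, the first summand of $\widetilde\nabla$ contributes nothing new (since $\nabla$ acts on the $\mathcal{A}$-part), and the $\delta$-term produces exactly $\xi\otimes(f^\prime)$ via the Leibniz rule for $[N,-]$, matching $\delta^0(f)=0\oplus(0+f^\prime)$.

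Finally, $\mathbb{C}$-linearity of $\widetilde\nabla$ is immediate from the formula, and the fact that the image actually lies in $\widetilde{\mathcal{E}}\otimes_{\varSigma^2\mathcal{A}}\Omega_{\varSigma^2D}^1(\varSigma^2\mathcal{A})$ rather than some larger module follows from Lemma~(\ref{imp iso 0}) applied to $\Omega_D^1$ in place of $\Omega_D^1\otimes\mathcal{S}$, plus the identification of the remaining $\varSigma^2\mathcal{A}$-summand of $\Omega_{\varSigma^2D}^1$. The main obstacle I anticipate is bookkeeping in the well-definedness step: one has to be careful that the two summands of the defining formula transform \emph{coherently} under a change of representative of an element of $(p\otimes u)(\mathcal{A}\otimes\mathcal{S})^n$, because the splitting $\Omega_{\varSigma^2D}^1=\Omega_D^1(\mathcal{A})\otimes\mathcal{S}\oplus\varSigma^2\mathcal{A}$ is only a vector-space (not module) decomposition, so the cross-terms generated when pushing an element of $\varSigma^2\mathcal{A}$ through the tensor product must be tracked; once one reduces to elementary matrices $e_{ij}$ and uses that $\nabla$ is a connection for $\mathcal{E}$, this becomes the same routine verification as in Lemma~(\ref{imp iso 0}), and I would state it as such rather than writing every line.
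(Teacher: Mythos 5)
Your proposal is correct and follows essentially the same route as the paper: a direct verification of the Leibniz rule for $\widetilde\nabla$ against the differential $\delta^0$ of Theorem~(\ref{final thm})(3), using the Leibniz rule for $\nabla$ on the $\mathcal{A}$-slot and the derivation property of $\delta=[N,-]$ on the $\mathcal{S}$-slot (the paper treats $b=a\otimes T+f$ in one computation rather than splitting the two cases, and dismisses well-definedness as routine, but these are presentational differences only).
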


\begin{proof}
Well-definedness is easy to check. Now consider any $a\otimes T+f \in \varSigma^2 \mathcal{A}\,$. Then,
\begin{eqnarray*}
&   & (p\otimes u)(a_1\otimes T_1,\ldots,a_n\otimes T_n)(a\otimes T+f)\\
& = & (p\otimes u)(a_1a\otimes T_1T,\ldots,a_na\otimes T_nT) + (p\otimes u)(a_1\otimes T_1f,\ldots,a_n\otimes T_nf).
\end{eqnarray*}
Image of this element under $\widetilde\nabla$ is, 
\begin{eqnarray*}
&  & \sum_{i=1}^n \nabla(p(0,\ldots,a_ia,\ldots,0))\otimes uT_iT + (p\otimes u)(a_1a\otimes \delta(uT_1T),\ldots,a_na\otimes \delta(uT_nT))\\
&  & \,\,\,\,\,\,+\sum_{i=1}^n \nabla(p(0,\ldots,a_i,\ldots,0))\otimes uT_if + (p\otimes u)(a_1\otimes \delta(uT_1f),\ldots,a_n\otimes \delta(uT_nf))
\end{eqnarray*}
Now,
\begin{eqnarray*}
&   & \widetilde \nabla \left((p\otimes u)(a_1\otimes T_1,\ldots,a_n\otimes T_n) \right).(a\otimes T+f) + (p\otimes u)(a_1\otimes T_1,\ldots,a_n\otimes T_n)\otimes \widetilde d(a\otimes T+f)\\
& = & \sum_{i=1}^n \nabla(p(0,\ldots,a_i,\ldots,0))a\otimes uT_iT + \sum_{i=1}^n \nabla(p(0,\ldots,a_i,\ldots,0))\otimes uT_if\\
&   & + (p\otimes u)(a_1\otimes \delta(uT_1),\ldots,a_n\otimes \delta(uT_n)).(a\otimes T+f)\\
&   & + (p\otimes u)(a_1\otimes T_1,\ldots,a_n\otimes T_n)\otimes (da\otimes T + a\otimes \delta T +1\otimes \delta f)\\
& = & \sum_{i=1}^n \{\nabla(p(0,\ldots,a_i,\ldots,0))a\otimes uT_iT + (p(0,\ldots,a_i,\ldots,0)\otimes da)\otimes uT_iT\}\\
&   & + \{(p\otimes u)(a_1a\otimes \delta(uT_1)T,\ldots,a_na\otimes \delta(uT_n)T) + (p\otimes u)(a_1a\otimes T_1\delta T,\ldots,a_na\otimes T_n\delta T)\}\\
&   & + \{(p\otimes u)(a_1\otimes \delta(uT_1)f,\ldots,a_n\otimes \delta(uT_n)f) + (p\otimes u)(a_1\otimes T_1\delta f,\ldots,a_n\otimes T_n\delta f)\}\\
&   & + \sum_{i=1}^n \nabla(p(0,\ldots,a_i,\ldots,0))\otimes uT_if\\
& = & \sum_{i=1}^n \{\nabla(p(0,\ldots,a_ia,\ldots,0))\otimes uT_iT + \sum_{i=1}^n \nabla(p(0,\ldots,a_i,\ldots,0))\otimes uT_if\\
&   & + (p\otimes u)(a_1a\otimes \delta(uT_1T),\ldots,a_na\otimes \delta(uT_nT)) + (p\otimes u)(a_1\otimes \delta(uT_1f),\ldots,a_n\otimes \delta(uT_nf))
\end{eqnarray*}
This shows that,
\begin{eqnarray*}
&   & \widetilde \nabla \left((p\otimes u)(a_1\otimes T_1,\ldots,a_n\otimes T_n).(a\otimes T+f)\right)\\
& = & \widetilde \nabla \left((p\otimes u)(a_1\otimes T_1,\ldots,a_n\otimes T_n) \right).(a\otimes T+f) + (p\otimes u)(a_1\otimes T_1,\ldots,a_n\otimes T_n)\otimes \widetilde d(a\otimes T+f)
\end{eqnarray*}
i,e. $\widetilde\nabla$ is a connection on $\widetilde{\mathcal{E}}$.
\end{proof}

\begin{lemma}\label{preservation of Grassmannian}
If $\,\nabla$ is the Grassmannian connection on $\mathcal{E}$, then $\widetilde\nabla$ is the Grassmannian connection on $\widetilde{\mathcal{E}}$. 
\end{lemma}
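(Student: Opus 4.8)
The claim is that $\widetilde\nabla$ from Proposition \ref{extended connection}, specialised to $\nabla=\nabla_0:\xi\mapsto p(d\xi_1,\ldots,d\xi_n)$ the Grassmannian connection on $\mathcal E=p\mathcal A^n$, agrees with the Grassmannian connection $\widetilde\nabla_0:\eta\mapsto(p\otimes u)\bigl(\delta^0\eta_1,\ldots,\delta^0\eta_n\bigr)$ on $\widetilde{\mathcal E}=(p\otimes u)(\varSigma^2\mathcal A)^n$, where $\delta^0$ is the differential of the Connes complex of $\varSigma^2\mathcal A$. Both maps are $\mathbb C$-linear and, since $u$ is a rank one projection, $\widetilde{\mathcal E}=(p\otimes u)(\mathcal A\otimes\mathcal S)^n$, so it is enough to evaluate on a generic $\eta=(p\otimes u)(a_1\otimes T_1,\ldots,a_n\otimes T_n)$, with $j$-th coordinate $\eta_j=\sum_i p_{ji}a_i\otimes uT_i$, and to compare the two elements of $\widetilde{\mathcal E}\otimes_{\varSigma^2\mathcal A}\Omega^1_{\varSigma^2D}(\varSigma^2\mathcal A)$ coordinatewise, using the splitting of $\Omega^1_{\varSigma^2D}$ from Theorem \ref{final thm}(1).

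First I would feed $\eta_j$ into the explicit formula of Theorem \ref{final thm}(3), $\delta^0(b\otimes S)=[D,b]\otimes S\,\bigoplus\,b\otimes\delta(S)$, obtaining $\delta^0\eta_j=\sum_i[D,p_{ji}a_i]\otimes uT_i\,\bigoplus\,\sum_i p_{ji}a_i\otimes\delta(uT_i)$, and then act by $p\otimes u$ on the left. On the $\varSigma^2\mathcal A$-summand this is ordinary multiplication; using $p^2=p$ and the identity $u\,\delta(uT)=\delta(uT)$ (which holds because $Nu=0$, whence $\delta(uT)=-uTN=u\,\delta(uT)$), the $j$-th coordinate there comes out to be $\sum_i p_{ji}a_i\otimes\delta(uT_i)$, which is precisely the $j$-th coordinate of the term $(p\otimes u)(a_1\otimes\delta(uT_1),\ldots,a_n\otimes\delta(uT_n))$ of Proposition \ref{extended connection}. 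On the $\Omega^1_D(\mathcal A)\otimes\mathcal S$-summand, writing the representative $[D,p_{ki}a_i]\otimes uT_i=(1\otimes uT_i)[\varSigma^2D,\,p_{ki}a_i\otimes I_{(uT_i)}]$ as in the proof of Lemma \ref{n forms involving A,S} (with $I_{(uT_i)}$ the identity block of order that of $uT_i$), left multiplication by $p_{jk}\otimes u$ gives $p_{jk}[D,p_{ki}a_i]\otimes uT_i$, so the $j$-th coordinate is $\sum_{k,i}p_{jk}[D,p_{ki}a_i]\otimes uT_i$. Since $p(0,\ldots,a_i,\ldots,0)$ has coordinates $(p_{1i}a_i,\ldots,p_{ni}a_i)$, the element $\nabla_0\bigl(p(0,\ldots,a_i,\ldots,0)\bigr)=p\bigl([D,p_{1i}a_i],\ldots,[D,p_{ni}a_i]\bigr)$ has $j$-th coordinate $\sum_k p_{jk}[D,p_{ki}a_i]$, and therefore $\sum_i\nabla_0\bigl(p(0,\ldots,a_i,\ldots,0)\bigr)\otimes uT_i$ matches coordinatewise too. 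Assembling the two summands shows $\widetilde\nabla_0(\eta)=\widetilde\nabla(\eta)$.

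The only subtle point is keeping track of how the left $\varSigma^2\mathcal A$-module structure on $\Omega^1_{\varSigma^2D}(\varSigma^2\mathcal A)$ looks under the isomorphism of Theorem \ref{final thm}(1) — in particular that $p_{jk}\otimes u$ acts on the $\Omega^1_D(\mathcal A)\otimes\mathcal S$-part componentwise as $(b\otimes S)\cdot(\omega\otimes R)=b\omega\otimes SR$. Rather than deriving this in the abstract, I would perform every multiplication on honest representatives in $\pi\bigl(\Omega^1(\varSigma^2\mathcal A)\bigr)$, using Lemma \ref{finite support 1} to rewrite the $uT_i$ in commutator form $\sum T_0[N,T_1]$ when required and Lemma \ref{n forms involving A,S} to push the scalar entries $p_{jk}$ through $[\varSigma^2D,\,\cdot\,]$; once this is done, the whole computation collapses to the elementary identities $p^2=p$, $u^2=u$, and $u\,\delta(uT)=\delta(uT)$, and there is no real obstacle beyond the bookkeeping.
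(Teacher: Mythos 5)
Your proposal is correct and follows essentially the same route as the paper: evaluate $\widetilde\nabla$ (with $\nabla=\nabla_0$) and the Grassmannian connection of $\widetilde{\mathcal E}$ on a generic element $(p\otimes u)(a_1\otimes T_1,\ldots,a_n\otimes T_n)$, expand $\delta^0$ via Theorem \ref{final thm}(3), and match the two summands coordinatewise using $p^2=p$ and $u\,\delta(uT)=\delta(uT)$. The only difference is cosmetic — you spell out the identity $Nu=0\Rightarrow u\,\delta(uT)=\delta(uT)$ and the module structure on the $\Omega_D^1(\mathcal A)\otimes\mathcal S$ summand, which the paper uses implicitly.
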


\begin{proof}
Let $\nabla=\nabla_0(\mathcal{E})$, the Grassmannian connection on $\mathcal{E}$. Then
\begin{eqnarray*}
&   & \widetilde{\nabla_0}((p\otimes u)(a_1\otimes T_1,\ldots,a_n\otimes T_n))\\
& = & \sum_{i=1}^n \nabla_0(\mathcal{E})(p(0,\ldots,a_i,\ldots,0))\otimes uT_i + (p\otimes u)(a_1\otimes \delta(uT_1),\ldots,a_n\otimes \delta(uT_n))\\
& = & \sum_{i=1}^n \nabla_0(\mathcal{E})((p_{1i}a_i,\ldots,p_{ni}a_i))\otimes uT_i + (p\otimes u)(a_1\otimes \delta(uT_1),\ldots,a_n\otimes \delta(uT_n))\\
& = & \sum_{i=1}^n p(d(p_{1i}a_i),\ldots,d(p_{ni}a_i))\otimes uT_i + (p\otimes u)(a_1\otimes \delta(uT_1),\ldots,a_n\otimes \delta(uT_n))\,.
\end{eqnarray*}
Now if $\nabla_0(\widetilde{\mathcal{E}})$ denotes the Grassmannian connection on $\widetilde{\mathcal{E}}\,$, then
\begin{eqnarray*}
&   & \nabla_0(\widetilde{\mathcal{E}})((p\otimes u)(a_1\otimes T_1,\ldots,a_n\otimes T_n))\\
& = & \nabla_0(\widetilde{\mathcal{E}})(\sum_{j=1}^np_{1j}a_j\otimes uT_j,\ldots,\sum_{j=1}^np_{nj}a_j\otimes uT_j)\\
& = & \sum_{j=1}^n (p\otimes u)(\widetilde d(p_{1j}a_j\otimes uT_j),\ldots,\widetilde d(p_{1j}a_j\otimes uT_j))\\
& = & \sum_{j=1}^n (p\otimes u)(d(p_{1j}a_j)\otimes uT_j+p_{1j}a_j\otimes \delta(uT_j),\ldots,d(p_{nj}a_j)\otimes uT_j+p_{nj}a_j\otimes \delta(uT_j))\,.
\end{eqnarray*}
Here $\,\widetilde d:\varSigma^2\mathcal{A}\longrightarrow \Omega_{\varSigma^2D}^1(\varSigma^2\mathcal{A})$ is the differential of Part $(3)$, Proposition ($\,$\ref{final thm}$\,$). Notice that,
$$\sum_{j=1}^n (p\otimes u)(p_{1j}a_j\otimes \delta(uT_j),\ldots,p_{nj}a_j\otimes \delta(uT_j))=(p\otimes u)(a_1\otimes \delta(uT_1),\ldots,a_n\otimes \delta(uT_n))$$ and this completes the proof. 
\end{proof}

\begin{proposition}\label{extended curvature}
Let $\,\varTheta:p\mathcal{A}^n\longrightarrow p\mathcal{A}^n\otimes_{\mathcal{A}}\Omega_D^2(\mathcal{A})$ be the curvature of the connection $\nabla$ on $\mathcal{E}=p\mathcal{A}^n$ and $\,\widetilde{\varTheta}$ denotes the
curvature of the connection $\widetilde \nabla$ of Proposition $(\,\ref{extended connection}\,)$. Then
\begin{center}
$\quad\quad\widetilde \varTheta : (p\otimes u)(\mathcal{A}\otimes \mathcal{S})^n \longrightarrow (p\otimes u)(\mathcal{A}\otimes \mathcal{S})^n \otimes_{\varSigma^2 \mathcal{A}} \Omega_{\varSigma^2 D}^2(\varSigma^2 \mathcal{A})$
\end{center} is the map given by
\begin{center}
$(p\otimes u)(a_1\otimes T_1,\ldots,a_n\otimes T_n) \longmapsto \sum_{i=1}^n \varTheta(p(0,\ldots,a_i,\ldots,0))\otimes uT_i\,.$
\end{center}
\end{proposition}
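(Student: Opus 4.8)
The plan is to evaluate the curvature $\widetilde{\varTheta}=\widetilde{\nabla}'\circ\widetilde{\nabla}$ of Proposition \ref{extended connection} directly, where $\widetilde{\nabla}'$ is the unique extension of $\widetilde{\nabla}$ to $\widetilde{\mathcal{E}}\otimes_{\varSigma^2\mathcal{A}}\Omega^1_{\varSigma^2D}\to\widetilde{\mathcal{E}}\otimes_{\varSigma^2\mathcal{A}}\Omega^2_{\varSigma^2D}$ obeying $\widetilde{\nabla}'(\xi\otimes\omega)=(\widetilde{\nabla}\xi)\omega+\xi\otimes\widetilde d\omega$. Since $\widetilde{\varTheta}$ is a right $\varSigma^2\mathcal{A}$-module map and $\widetilde{\mathcal{E}}\cong p\mathcal{A}^n\otimes u\mathcal{S}$ by Lemma \ref{imp iso}, it suffices to compute it on elements $x=(p\otimes u)(a_1\otimes T_1,\ldots,a_n\otimes T_n)$.

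The organising principle is that, under the identification $\widetilde{\mathcal{E}}\cong p\mathcal{A}^n\otimes u\mathcal{S}$, the formula of Proposition \ref{extended connection} exhibits $\widetilde{\nabla}$ as a tensor product connection: on $p\mathcal{A}^n\otimes u\mathcal{S}$ it reads $\nabla\otimes 1+1\otimes\nabla^{\mathcal{S}}$, where $\nabla^{\mathcal{S}}\colon u\mathcal{S}\to u\mathcal{S}\otimes_{\varSigma^2\mathcal{A}}\Omega^1_{\varSigma^2D}$ sends $uT\mapsto\delta(uT)$ into the $\varSigma^2\mathcal{A}$-summand of $\Omega^1_{\varSigma^2D}$ (Theorem \ref{final thm}, parts (1) and (3)). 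The curvature of a tensor product connection carries no mixed term, so one expects $\widetilde{\varTheta}=\varTheta\otimes 1+1\otimes\varTheta^{\mathcal{S}}$; and $\varTheta^{\mathcal{S}}$ vanishes because $\delta^1$ annihilates the $\varSigma^2\mathcal{A}$-summand of $\Omega^1_{\varSigma^2D}$ (Theorem \ref{final thm}(4)) and products within that summand die in $\Omega^2_{\varSigma^2D}\cong\Omega_D^2(\mathcal{A})\otimes\mathcal{S}$ (Theorem \ref{final thm}(2)) --- equivalently, connections built from $(\mathcal{S},\ell^2(\mathbb{N}),N)$ are flat since $\Omega^2_N(\mathcal{S})=0$ by Proposition \ref{complex for finite support}. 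Transporting $\varTheta\otimes 1$ back through the degree-two analogue of Lemma \ref{imp iso 0} gives precisely the stated formula.

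Concretely the steps are: (i) apply $\widetilde{\nabla}$ to $x$ via Proposition \ref{extended connection}, splitting its image into the $\nabla$-part $\sum_i\nabla(p(0,\ldots,a_i,\ldots,0))\otimes uT_i$, lying in $\widetilde{\mathcal{E}}\otimes_{\varSigma^2\mathcal{A}}(\Omega_D^1(\mathcal{A})\otimes\mathcal{S})$, and the $\delta$-part $(p\otimes u)(a_1\otimes\delta(uT_1),\ldots,a_n\otimes\delta(uT_n))$, lying in $\widetilde{\mathcal{E}}\otimes_{\varSigma^2\mathcal{A}}(\varSigma^2\mathcal{A})$; (ii) apply $\widetilde{\nabla}'$ to each via its Leibniz rule; (iii) on the $\nabla$-part use $\delta^1|_{\Omega_D^1(\mathcal{A})\otimes\mathcal{S}}=d^1\otimes 1$ (Theorem \ref{final thm}(4)) to get $\sum_i\nabla'\nabla(p(0,\ldots,a_i,\ldots,0))\otimes uT_i=\sum_i\varTheta(p(0,\ldots,a_i,\ldots,0))\otimes uT_i$; (iv) on the $\delta$-part use $\delta^1|_{\varSigma^2\mathcal{A}}=0$ to kill the $\xi\otimes\widetilde d\omega$ term, and check that the surviving $(\widetilde{\nabla}\xi)\omega$ terms cancel the mixed $\nabla$--$\delta$ cross terms produced in (iii); (v) observe that all remaining terms --- the iterated-$\delta$ terms and anything landing in a lower $F$-graded component --- vanish in $\Omega^2_{\varSigma^2D}(\varSigma^2\mathcal{A})$.

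The main obstacle is the bookkeeping in steps (iv) and (v): one must pin down the bimodule identifications $\widetilde{\mathcal{E}}\otimes_{\varSigma^2\mathcal{A}}\Omega^j_{\varSigma^2D}\cong p(\Omega_D^j)^n\otimes u\mathcal{S}$ (Lemmas \ref{imp iso 0}, \ref{imp iso} and their degree-two analogue), keep track of which $F$-graded component of $\Omega^1_{\varSigma^2D}$ and $\Omega^2_{\varSigma^2D}$ each term occupies, and verify the cancellations invoked above. Once the identifications are fixed these are consequences of $\Omega^2_N(\mathcal{S})=0$ and of the explicit differentials of Theorem \ref{final thm}; the remaining manipulations are routine applications of the Leibniz rule for $\widetilde{\nabla}'$ together with the well-definedness already established in Proposition \ref{extended connection}.
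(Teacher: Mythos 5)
Your proposal is correct and follows essentially the same route as the paper: a direct Leibniz computation of $\widetilde\nabla'\circ\widetilde\nabla$ on $(p\otimes u)(a_1\otimes T_1,\ldots,a_n\otimes T_n)$, split into the $\nabla$-part and the $\delta$-part, using the identifications of Lemmas (\ref{imp iso 0}), (\ref{imp iso}), the differentials of Theorem (\ref{final thm}) and the flatness coming from $\Omega^2_N(\mathcal{S})=0$. The only cosmetic difference is that no cancellation of mixed cross terms is actually needed: in the paper each such term vanishes on its own, because $\delta(u)=[N,u]=0$ and because products of the $\Omega_D^1(\mathcal{A})\otimes\mathcal{S}$-summand with the $\varSigma^2\mathcal{A}$-summand are zero in $\Omega^2_{\varSigma^2D}(\varSigma^2\mathcal{A})\cong\Omega_D^2(\mathcal{A})\otimes\mathcal{S}$, which is exactly the vanishing your step (v) anticipates.
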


\begin{proof}
Let $\nabla : p\mathcal{A}^n \longrightarrow p\mathcal{A}^n \otimes_{\mathcal{A}} \Omega_D^1$ be a connection for which $\varTheta$ is the curvature and $\widetilde \nabla$ denotes the connection in Proposition
($\,$\ref{extended connection}$\,$). We let $\widetilde \nabla^\prime$ be the extended map,
\begin{center}
$\widetilde \nabla^\prime : \widetilde{\mathcal{E}}\otimes_{\varSigma^2 \mathcal{A}} \Omega_{\varSigma^2 D}^1(\varSigma^2 \mathcal{A}) \longrightarrow \widetilde{\mathcal{E}}\otimes_{\varSigma^2 \mathcal{A}}
\Omega_{\varSigma^2 D}^2(\varSigma^2 \mathcal{A}).$
\end{center}
Then $\,\widetilde \varTheta = \widetilde \nabla^\prime \circ \widetilde \nabla$. Now,
\begin{eqnarray*}
&   & \widetilde \nabla^\prime \left(\sum_{i=1}^n \nabla(p(0,\ldots,a_i,\ldots,0))\otimes uT_i\right)\\
& = & \widetilde \nabla^\prime \left(\sum_{i=1}^n p(\omega_1^{(i)},\ldots,\omega_n^{(i)})\otimes uT_i\right)\\
& = & \widetilde \nabla^\prime \left(\sum_{i=1}^n (p\otimes u)(\omega_1^{(i)}\otimes uT_i,\ldots,\omega_n^{(i)}\otimes uT_i) \right)\\
& = & \widetilde \nabla^\prime \left(\sum_{i=1}^n \sum_{j=1}^n (p\otimes u)(0,\ldots,\underbrace{1\otimes u}_{j-th\, place},\ldots,0)\otimes (\omega_j^{(i)}\otimes uT_i) \right)\\
& = & \sum_{i=1}^n \sum_{j=1}^n \widetilde \nabla \left((p\otimes u)(0,\ldots,\underbrace{1\otimes u}_{j-th\, place},\ldots,0) \right) . (\omega_j^{(i)}\otimes uT_i)\\
&   & + (p\otimes u)(0,\ldots,\underbrace{1\otimes u}_{j-th\, place},\ldots,0)\otimes (d\omega_j^{(i)}\otimes uT_i)\\
& = & \sum_{i=1}^n \sum_{j=1}^n \nabla (p(0,\ldots,\underbrace{1}_{j-th\, place},\ldots,0))\omega_j^{(i)}\otimes uT_i + (p\otimes u)(0,\ldots,\underbrace{d\omega_j^{(i)}\otimes uT_i}_{j-th\, place},\ldots,0)
\end{eqnarray*}
In the last equality use the fact that $\delta(u)=[N,u] = 0$. Also,
\begin{eqnarray*}
&   & \widetilde \nabla^\prime ((p\otimes u)(a_1\otimes \delta(uT_1),\ldots,a_n\otimes \delta(uT_n)))\\
& = & \widetilde \nabla^\prime (\sum_{i=1}^n (p\otimes u)(0,\ldots,a_i\otimes u,\ldots,0)\otimes (1\otimes \delta(uT_i)))\\
& = & \sum_{i=1}^n \widetilde \nabla ((p\otimes u)(0,\ldots,a_i\otimes u,\ldots,0))(1\otimes \delta(uT_i))
 + (p\otimes u)(0,\ldots,a_i\otimes u,\ldots,0)\otimes d(1\otimes \delta(uT_i))\\
& = & \sum_{i=1}^n \{\nabla (p(0,\ldots,a_i,\ldots,0))\otimes u + (p\otimes u)(0,\ldots,a_i\otimes \delta(u),\ldots,0)\}(1\otimes \delta(uT_i))\\
& = & \sum_{i=1}^n \{ \nabla(p(0,\ldots,a_i,\ldots,0))\otimes u\} (1\otimes \delta(uT_i))\\
& = & 0\, .\, \, \quad \, (\,see\,\,\,Theorem\,\,(\,\ref{final thm}\,))
\end{eqnarray*}
Finally,
\begin{eqnarray*}
&   & \widetilde \varTheta \left((p\otimes u)(a_1\otimes T_1,\ldots,a_n\otimes T_n)\right)\\
& = & \widetilde \nabla^\prime \left(\sum_{i=1}^n \nabla(p(0,\ldots,a_i,\ldots,0))\otimes uT_i\right)
 + \widetilde \nabla^\prime \left((p\otimes u)(a_1\otimes \delta(uT_1),\ldots,a_n\otimes \delta(uT_n))\right)\\
& = & \sum_{i=1}^n \sum_{j=1}^n \nabla (p(0,\ldots,\underbrace{1}_{j-th\, place},\ldots,0))\omega_j^{(i)}\otimes uT_i
 + (p\otimes u)(0,\ldots,\underbrace{d\omega_j^{(i)}\otimes uT_i}_{j-th\, place},\ldots,0)\\
& = & \sum_{i=1}^n \sum_{j=1}^n (\nabla (p(0,\ldots,\underbrace{1}_{j-th\, place},\ldots,0))\omega_j^{(i)} + p(0,\ldots,\underbrace{1}_{j-th\, place},\ldots,0)\otimes d\omega_j^{(i)})\otimes uT_i\\
& = & \sum_{i=1}^n \sum_{j=1}^n \nabla^\prime (p(0,\ldots,\underbrace{1}_{j-th\, place},\ldots,0)\otimes \omega_j^{(i)})\otimes uT_i\\
& = & \sum_{i=1}^n \sum_{j=1}^n \nabla^\prime(p(0,\ldots,\underbrace{\omega_j^{(i)}}_{j-th\, place},\ldots,0))\otimes uT_i\\
& = & \sum_{i=1}^n \varTheta(p(0,\ldots,a_i,\ldots,0))\otimes uT_i
\end{eqnarray*}
\end{proof}

\begin{lemma}
Let $\,\widetilde\xi,\,\widetilde\eta\in(p\otimes u)(\mathcal{A}\otimes \mathcal{S})^n$ and $\Psi(\widetilde \xi) = \sum_k\xi_k\otimes uT_k\, $, $\Psi(\widetilde\eta) = \sum_k\eta_k\otimes uS_k\,$ where
\begin{center}
$\,\,\Psi : (p\otimes u)(\mathcal{A}\otimes \mathcal{S})^n \longrightarrow p\mathcal{A}^n\otimes u\mathcal{S}$
\end{center}
\begin{center}
$\quad\quad\quad\quad(p\otimes u)(a_1\otimes T_1,\ldots,a_n\otimes T_n)\longmapsto\displaystyle\sum_{i=1}^n\,p(0,\ldots,a_i,\ldots,0)\otimes uT_i$
\end{center}
is the isomorphism of Lemma $(\,\ref{imp iso}\,)$. Then the induced canonical Hermitian structure on $(p\otimes u)(\mathcal{A}\otimes \mathcal{S})^n$ has the following form
\begin{eqnarray*}
\langle\, \widetilde \xi,\widetilde \eta\, \rangle_{\varSigma^2 \mathcal{A}} = \sum_{k,k^\prime} \langle\, \xi_k,\eta_{k^\prime} \rangle_{\mathcal{A}}\otimes (uT_k)^*(uS_{k^\prime})\, .
\end{eqnarray*} 
\end{lemma}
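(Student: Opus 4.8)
The plan is to compute $\langle\,\widetilde\xi,\widetilde\eta\,\rangle_{\varSigma^2\mathcal{A}}$ straight from the definition of the canonical Hermitian structure and then to match the result, term by term, against the asserted right-hand side using the explicit formula for $\Psi$ recalled in the statement. First I would note that on the free module $(\varSigma^2\mathcal{A})^n$ the canonical structure is $\langle\,x,y\,\rangle_{\varSigma^2\mathcal{A}}=\sum_{j=1}^n x_j^{*}y_j$, so that for $\widetilde\xi,\widetilde\eta\in\widetilde{\mathcal{E}}=(p\otimes u)(\varSigma^2\mathcal{A})^n$ the restricted structure is simply $\langle\,\widetilde\xi,\widetilde\eta\,\rangle_{\varSigma^2\mathcal{A}}=\sum_{j=1}^n\widetilde\xi_j^{*}\widetilde\eta_j$, where the components $\widetilde\xi_j$ are taken inside $\mathcal{A}\otimes\mathcal{S}\subseteq\varSigma^2\mathcal{A}$ and the involution acts as $(a\otimes T)^{*}=a^{*}\otimes T^{*}$.

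Next, since both sides of the identity are sesquilinear in $(\widetilde\xi,\widetilde\eta)$, it suffices to treat a $\widetilde\xi$ written as a finite sum of elements of the form $(p\otimes u)(0,\dots,a\otimes T,\dots,0)$ with a single nonzero slot; say the $k$-th summand has entry $a_k\otimes T_k$ in slot $i_k$. By the description of $\Psi$ one then gets $\Psi(\widetilde\xi)=\sum_k\xi_k\otimes uT_k$ with $\xi_k=p(0,\dots,a_k,\dots,0)\in p\mathcal{A}^n$, while the $j$-th component of $\widetilde\xi$ is $\widetilde\xi_j=\sum_k(\xi_k)_j\otimes uT_k$, and similarly $\widetilde\eta_j=\sum_{k'}(\eta_{k'})_j\otimes uS_{k'}$. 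Plugging in and expanding,
$$\langle\,\widetilde\xi,\widetilde\eta\,\rangle_{\varSigma^2\mathcal{A}}=\sum_{j=1}^n\Big(\sum_k(\xi_k)_j\otimes uT_k\Big)^{*}\Big(\sum_{k'}(\eta_{k'})_j\otimes uS_{k'}\Big)=\sum_{k,k'}\Big(\sum_{j=1}^n(\xi_k)_j^{*}(\eta_{k'})_j\Big)\otimes(uT_k)^{*}(uS_{k'}),$$
and the inner sum $\sum_{j=1}^n(\xi_k)_j^{*}(\eta_{k'})_j$ is, by the set-up of Theorem \ref{extension thm} together with Theorem \ref{theorem of CG}, precisely $\langle\,\xi_k,\eta_{k'}\,\rangle_{\mathcal{A}}$, the canonical Hermitian structure on $\mathcal{E}=p\mathcal{A}^n$. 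This yields the claimed formula.

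There is no genuinely hard step; the care required is purely bookkeeping. One must respect the sesquilinearity convention of the paper (conjugate-linear in the first argument, so the involution falls on $\widetilde\xi_j$, hence on each $(\xi_k)_j$ and on $uT_k$); one must keep track of the two tensor legs, i.e.\ that multiplication in $\varSigma^2\mathcal{A}\subseteq\mathcal{A}\otimes\mathscr{T}$ multiplies the $\mathcal{A}$-legs and the $\mathscr{T}$-legs separately; and one should observe that $(uT_k)^{*}(uS_{k'})\in\mathcal{S}$, so the right-hand side indeed lies in $\mathcal{A}\otimes\mathcal{S}\subseteq\varSigma^2\mathcal{A}$. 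Finally, the apparent dependence of the right-hand side on the chosen decomposition of $\widetilde\xi,\widetilde\eta$ is illusory: the computation identifies that expression with the intrinsically defined $\langle\,\widetilde\xi,\widetilde\eta\,\rangle_{\varSigma^2\mathcal{A}}$, so the independence is automatic. Assembling these remarks completes the proof.
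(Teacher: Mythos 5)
Your proposal is correct and follows essentially the same route as the paper: both expand $\langle\widetilde\xi,\widetilde\eta\rangle_{\varSigma^2\mathcal{A}}=\sum_j\widetilde\xi_j^*\widetilde\eta_j$ componentwise, separate the $\mathcal{A}$-leg from the $\mathcal{S}$-leg, and recognize $\sum_j(\xi_k)_j^*(\eta_{k'})_j$ as $\langle\xi_k,\eta_{k'}\rangle_{\mathcal{A}}$ for $\xi_k=p(0,\ldots,a_k,\ldots,0)$. Your closing remark on independence of the chosen decomposition is a small point of care that the paper's proof leaves implicit.
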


\begin{proof}
Let $\,\widetilde \xi = (p\otimes u)(a_1\otimes T_1,\ldots,a_n\otimes T_n)$ and $\widetilde \eta = (p\otimes u)(a_1^\prime \otimes T_1^\prime,\ldots,a_n^\prime\otimes T_n^\prime)$. Then
\begin{eqnarray*}
 \langle\, \widetilde \xi,\widetilde \eta \rangle_{\varSigma^2 \mathcal{A}}
& = & \langle\, \, (\sum_{j=1}^n p_{1j}a_j\otimes uT_j,\ldots,\sum_{j=1}^n p_{nj}a_j\otimes uT_j)\, ,
(\sum_{j=1}^n p_{1j}a_j^\prime \otimes uT_j^\prime,\ldots,\sum_{j=1}^n p_{nj}a_j^\prime \otimes uT_j^\prime)\, \rangle\\
& = & \langle \, \, \sum_{j=1}^n (p_{1j}a_j\otimes uT_j,\ldots,p_{nj}a_j\otimes uT_j)\, ,\sum_{j=1}^n (p_{1j}a_j^\prime \otimes uT_j^\prime,\ldots,p_{nj}a_j^\prime \otimes uT_j^\prime)\, \rangle\\
& = & \sum_{j,l=1}^n \langle \, \, (p_{1j}a_j\otimes uT_j,\ldots,p_{nj}a_j\otimes uT_j)\, ,\, (p_{1l}a_l^\prime \otimes uT_l^\prime,\ldots,p_{nl}a_l^\prime \otimes uT_l^\prime)\, \rangle\\
& = & \sum_{j,l=1}^n \sum_{k=1}^n (p_{kj}a_j\otimes uT_j)^*(p_{kl}a_l^\prime \otimes uT_l^\prime)
\end{eqnarray*}
Now $\Psi(\widetilde \xi) = \sum_{j=1}^n p(0,\ldots,a_j,\ldots,0)\otimes uT_j$ and $\Psi(\widetilde \eta) = \sum_{j=1}^n p(0,\ldots,a_j^\prime,\ldots,0)\otimes uT_j^\prime\,$. Let $\xi_j = p(0,\ldots,a_j,\ldots,0)\, $ and 
$\, \eta_j = p(0,\ldots,a_j^\prime,\ldots,0)$. Now,
\begin{eqnarray*}
\langle\, \xi_r,\eta_s \rangle_{\mathcal{A}} & = & \langle\, p(0,\ldots,a_r,\ldots,0),p(0,\ldots,a_s^\prime,\ldots,0)\rangle_{\mathcal{A}}\\
                                             & = & \langle\, (p_{1r}a_r,\ldots,p_{nr}a_r),(p_{1s}a_s^\prime,\ldots,p_{ns}a_s^\prime)\rangle_{\mathcal{A}}\\
                                             & = & \sum_{k=1}^n (p_{kr}a_r)^*p_{ks}a_s^\prime\\
                                             & = & \sum_{k=1}^n a_r^*p_{kr}p_{ks}a_s^\prime
\end{eqnarray*}
Hence we have
\begin{center}
$\sum_{r,s} \langle\, \xi_r,\eta_s\rangle_{\mathcal{A}}\otimes (uT_r)^*(uT^\prime_s) = \sum_{r,s} \left(\sum_{i=1}^n (p_{ir}a_r\otimes uT_r)^*(p_{is}a_s^\prime \otimes uT^\prime_s)\right)\,.$
\end{center}
\end{proof}

\begin{lemma}\label{useful in next lemma}
For $\xi,\eta \in \mathcal{E}$ we have
\begin{enumerate}
 \item[(a)] $\langle \xi\otimes uT,\nabla \eta\otimes uS\rangle = \langle \xi,\nabla\eta\rangle\otimes (uT)^*uS\, $,
 \item[(b)] $\langle \nabla \xi\otimes uT,\eta\otimes uS \rangle = \langle \nabla \xi,\eta \rangle\otimes (uT)^*uS\, $.
\end{enumerate}
\end{lemma}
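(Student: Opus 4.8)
The plan is to establish both identities by a short direct computation. First I would reduce, using $\mathbb{C}$-sesquilinearity of all the pairings and $\varSigma^2\mathcal{A}$-linearity in the appropriate slots, to the case in which the connection image is a single elementary tensor, $\nabla\eta = \eta_0\otimes\omega$ in part (a) and $\nabla\xi = \xi_0\otimes\omega$ in part (b), with $\eta_0,\xi_0\in\mathcal{E}$ and $\omega\in\Omega_D^1(\mathcal{A})$. The computation then rests on four inputs, which I would bring in one at a time: the explicit formula $\langle\,\widetilde\xi,\widetilde\eta\,\rangle_{\varSigma^2\mathcal{A}} = \sum_{k,k'}\langle\,\xi_k,\eta_{k'}\,\rangle_{\mathcal{A}}\otimes(uT_k)^*(uS_{k'})$ for the induced canonical Hermitian structure on $\widetilde{\mathcal{E}}=(p\otimes u)(\mathcal{A}\otimes\mathcal{S})^n$ from the preceding Lemma; the rule $\langle v,\,w\otimes\tau\rangle = \langle v,w\rangle\,\tau$ (and its adjoint $\langle w\otimes\tau,\,v\rangle=\tau^*\langle w,v\rangle$) defining the extension of a Hermitian pairing to $\mathcal{E}\otimes\Omega^1$; the relations $u=u^2=u^*$; and the identifications of Theorem $(\,\ref{final thm}\,)$ together with Lemmas $(\,\ref{imp iso 0}\,)$, $(\,\ref{imp iso}\,)$, which locate $\nabla\eta\otimes uS$ inside the summand $\widetilde{\mathcal{E}}\otimes_{\varSigma^2\mathcal{A}}\bigl(\Omega_D^1(\mathcal{A})\otimes\mathcal{S}\bigr)$ of $\widetilde{\mathcal{E}}\otimes_{\varSigma^2\mathcal{A}}\Omega_{\varSigma^2 D}^1(\varSigma^2\mathcal{A})$ and pin down the bimodule action and involution on that summand as the diagonal ones, $(\omega\otimes R)(a\otimes Q)=\omega a\otimes RQ$ and $(\omega\otimes R)^*=\omega^*\otimes R^*$.

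For part (a), once $\nabla\eta=\eta_0\otimes\omega$ is written as $(\eta_0\otimes u)\otimes_{\varSigma^2\mathcal{A}}(\omega\otimes uS)$ under these identifications, applying the extension rule and then the Hermitian-structure formula would give
\[
\langle\,\xi\otimes uT,\ \nabla\eta\otimes uS\,\rangle \;=\; \langle\,\xi\otimes uT,\ \eta_0\otimes u\,\rangle_{\varSigma^2\mathcal{A}}\,(\omega\otimes uS)\;=\;\bigl(\langle\,\xi,\eta_0\,\rangle_{\mathcal{A}}\otimes (uT)^*u\bigr)(\omega\otimes uS),
\]
and performing the module multiplication on $\Omega_D^1(\mathcal{A})\otimes\mathcal{S}$ and using $u=u^2$ collapses this to $\langle\,\xi,\eta_0\,\rangle_{\mathcal{A}}\,\omega\otimes(uT)^*(uS)$, which is $\langle\,\xi,\nabla\eta\,\rangle\otimes(uT)^*uS$ because $\langle\,\xi,\nabla\eta\,\rangle=\langle\,\xi,\eta_0\otimes\omega\,\rangle=\langle\,\xi,\eta_0\,\rangle_{\mathcal{A}}\,\omega$. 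Part (b) would run in parallel: with $\nabla\xi=\xi_0\otimes\omega$ corresponding to $(\xi_0\otimes u)\otimes_{\varSigma^2\mathcal{A}}(\omega\otimes uT)$, the adjoint extension rule, the diagonal involution $(\omega\otimes uT)^*=\omega^*\otimes(uT)^*$, and the Hermitian-structure formula would yield
\[
\langle\,\nabla\xi\otimes uT,\ \eta\otimes uS\,\rangle=\bigl(\omega^*\otimes(uT)^*\bigr)\bigl(\langle\,\xi_0,\eta\,\rangle_{\mathcal{A}}\otimes uS\bigr)=\omega^*\langle\,\xi_0,\eta\,\rangle_{\mathcal{A}}\otimes(uT)^*(uS)=\langle\,\nabla\xi,\eta\,\rangle\otimes(uT)^*(uS).
\]

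I expect the arithmetic to be entirely routine; the only place where care is genuinely needed --- and essentially the whole content of the lemma --- is the bookkeeping of the identifications, namely that when $\nabla\eta\otimes uS$ (resp. $\nabla\xi\otimes uT$) is viewed inside $\widetilde{\mathcal{E}}\otimes_{\varSigma^2\mathcal{A}}\Omega_{\varSigma^2 D}^1(\varSigma^2\mathcal{A})$ the factor $uS$ (resp. $uT$) sits with the form part, and that the module structure and $*$-operation on the summand $\Omega_D^1(\mathcal{A})\otimes\mathcal{S}$ of $\Omega_{\varSigma^2 D}^1(\varSigma^2\mathcal{A})$ are diagonal. Both are forced by Theorem $(\,\ref{final thm}\,)$ and Lemmas $(\,\ref{imp iso 0}\,)$, $(\,\ref{imp iso}\,)$, so with those made explicit the two equalities follow by inspection.
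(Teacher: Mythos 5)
Your proposal is correct and follows essentially the same route as the paper's proof: both rewrite $\nabla\eta\otimes uS$ (resp.\ $\nabla\xi\otimes uT$) so that the $\mathcal{S}$-factor travels with the form part, i.e.\ as $\sum_i(\eta_{0i}\otimes u)\otimes(\omega_i\otimes uS)$, then apply the extension rule for the pairing, the explicit formula for the induced canonical Hermitian structure from the preceding lemma, and $u=u^2=u^*$ to collapse $(uT)^*u\cdot uS$ to $(uT)^*uS$. The only cosmetic difference is that you reduce to a single elementary tensor by sesquilinearity while the paper carries the sum $\sum_i p(b_{1i},\ldots,b_{ni})\otimes\omega_i$ throughout.
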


\begin{proof}
Let $\xi = p(a_1,\ldots,a_n) \in p\mathcal{A}^n$, $\nabla \eta = \sum_i p(b_{1i},\ldots,b_{ni})\otimes \omega_i \in p\mathcal{A}^n\otimes \Omega_D^1(\mathcal{A})$. Then,
\begin{eqnarray*}
\xi\otimes uT & = & p(a_1,\ldots,a_n)\otimes uT\\
              & = & (p\otimes u)(a_1\otimes T,\ldots,a_n\otimes T)
\end{eqnarray*}
and
\begin{eqnarray*}
\nabla \eta\otimes uS & = & \sum_i (p(b_{1i},\ldots,b_{ni})\otimes \omega_i)\otimes uS\\
                      & = & \sum_i p(b_{1i}\omega_i,\ldots,b_{ni}\omega_i)\otimes uS\\
                      & = & \sum_i (p\otimes u)(b_{1i}\omega_i\otimes S,\ldots,b_{ni}\omega_i\otimes S)\\
                      & = & \sum_i (p\otimes u)(b_{1i}\otimes u,\ldots,b_{ni}\otimes u)\otimes (\omega_i\otimes uS)
\end{eqnarray*}
Hence,
\begin{eqnarray*}
&   & \langle \xi\otimes uT,\nabla \eta\otimes uS \rangle\\
& = & \sum_i \langle (p\otimes u)(a_1\otimes T,\ldots,a_n\otimes T),(p\otimes u)(b_{1i}\otimes u,\ldots,b_{ni}\otimes u)\rangle_{\varSigma^2 \mathcal{A}} (\omega_i\otimes uS)\\
& = & \sum_i \langle p(a_1,\ldots,a_n)\otimes uT,p(b_{1i},\ldots,b_{ni})\otimes u\rangle_{\varSigma^2 \mathcal{A}} (\omega_i\otimes uS)\\
& = & \sum_i \left(\langle p(a_1,\ldots,a_n),p(b_{1i},\ldots,b_{ni})\rangle_\mathcal{A}\otimes (uT)^*u\right) (\omega_i\otimes uS)\\
& = & \sum_i (\langle p(a_1,\ldots,a_n),p(b_{1i},\ldots,b_{ni})\rangle_\mathcal{A} \omega_i)\otimes (uT)^*uS\\
& = & \sum_i \langle p(a_1,\ldots,a_n),p(b_{1i},\ldots,b_{ni})\otimes \omega_i \rangle \otimes (uT)^*uS\\
& = & \langle \xi,\nabla \eta \rangle\otimes (uT)^*uS
\end{eqnarray*}
This proves part $(a)$ and part $(b)$ follows similarly.
\end{proof}

\begin{lemma}\label{extended compatibility}
The connection $\widetilde \nabla$ of Proposition $(\,\ref{extended connection}\,)$ is compatible with the Hermitian structure $\, \langle \, \, \, \, ,\, \, \, \rangle_{\varSigma^2 \mathcal{A}}$ on $\widetilde {\mathcal{E}}\, $,
if $\,\nabla$ is so with respect to $\, \langle \, \, \, \, ,\, \, \, \rangle_{\mathcal{A}}$ on $\mathcal{E}$. 
\end{lemma}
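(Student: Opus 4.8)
The cleanest route exploits the affine structure of the space of compatible connections. Write $\nabla = \nabla_0 + \alpha$, where $\nabla_0 = \nabla_0(\mathcal{E})$ is the Grassmannian connection and $\alpha \in Hom_\mathcal{A}\big(\mathcal{E},\mathcal{E}\otimes_\mathcal{A}\Omega_D^1(\mathcal{A})\big)$. Inspecting the formula of Proposition $(\ref{extended connection})$, the correction term $(p\otimes u)(a_1\otimes\delta(uT_1),\ldots,a_n\otimes\delta(uT_n))$ does not involve $\nabla$, so the assignment $\nabla\mapsto\widetilde\nabla$ is affine, with $\widetilde\nabla = \widetilde{\nabla_0} + \widetilde\alpha$ where, under the isomorphism $\Psi$ of Lemma $(\ref{imp iso})$, the perturbation $\widetilde\alpha \in Hom_{\varSigma^2\mathcal{A}}\big(\widetilde{\mathcal{E}},\widetilde{\mathcal{E}}\otimes_{\varSigma^2\mathcal{A}}\Omega_{\varSigma^2 D}^1\big)$ is the map $\xi\otimes uT\mapsto\alpha\xi\otimes uT$ (one checks readily that this is $\varSigma^2\mathcal{A}$-linear, using $\mathcal{A}$-linearity of $\alpha$). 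By Lemma $(\ref{preservation of Grassmannian})$, $\widetilde{\nabla_0} = \nabla_0(\widetilde{\mathcal{E}})$ is the Grassmannian connection of $\widetilde{\mathcal{E}}$ for the restricted canonical Hermitian structure, hence is compatible. Subtracting the compatibility identities for $\nabla$ and for $\nabla_0$ shows that $\alpha$ is \emph{admissible}, i.e. $\langle\,\xi,\alpha\eta\,\rangle = \langle\,\alpha\xi,\eta\,\rangle$ for all $\xi,\eta\in\mathcal{E}$; and, subtracting off the already known compatibility of $\widetilde{\nabla_0}$, compatibility of $\widetilde\nabla$ is equivalent to admissibility of $\widetilde\alpha$, i.e. to $\langle\,\widetilde\xi,\widetilde\alpha\widetilde\eta\,\rangle_{\varSigma^2\mathcal{A}} = \langle\,\widetilde\alpha\widetilde\xi,\widetilde\eta\,\rangle_{\varSigma^2\mathcal{A}}$.

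So the plan is to reduce this last identity — by sesquilinearity of $\langle\,,\,\rangle_{\varSigma^2\mathcal{A}}$, $\mathbb{C}$-linearity of $\widetilde\alpha$, and $\Psi$ — to elementary tensors $\widetilde\xi = \xi\otimes uT$, $\widetilde\eta = \eta\otimes uS$ with $\xi,\eta\in\mathcal{E}$ and $T,S\in\mathcal{S}$. Since $\alpha\eta\in p\mathcal{A}^n\otimes_\mathcal{A}\Omega_D^1$, Lemma $(\ref{useful in next lemma})$ applies verbatim (its proof uses only this membership, not that the argument is literally $\nabla\eta$), giving $\langle\,\widetilde\xi,\widetilde\alpha\widetilde\eta\,\rangle_{\varSigma^2\mathcal{A}} = \langle\,\xi,\alpha\eta\,\rangle\otimes (uT)^*uS$ and likewise $\langle\,\widetilde\alpha\widetilde\xi,\widetilde\eta\,\rangle_{\varSigma^2\mathcal{A}} = \langle\,\alpha\xi,\eta\,\rangle\otimes (uT)^*uS$; these coincide by admissibility of $\alpha$. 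Restoring the compatibility of $\widetilde{\nabla_0}$ then yields that of $\widetilde\nabla$, which is the assertion.

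For completeness — and this is presumably nearer to the bare-hands argument the companion lemmas invite — one may instead verify directly that $\langle\,\widetilde\xi,\widetilde\nabla\widetilde\eta\,\rangle_{\varSigma^2\mathcal{A}} - \langle\,\widetilde\nabla\widetilde\xi,\widetilde\eta\,\rangle_{\varSigma^2\mathcal{A}} = \widetilde d\,\langle\,\widetilde\xi,\widetilde\eta\,\rangle_{\varSigma^2\mathcal{A}}$ on elementary tensors. Here $\widetilde\nabla(\xi\otimes uT) = \nabla\xi\otimes uT + \xi\otimes\delta(uT)$, the second summand living in the $\varSigma^2\mathcal{A}$-part of $\Omega_{\varSigma^2 D}^1$; expanding the left-hand side through the explicit form of $\langle\,,\,\rangle_{\varSigma^2\mathcal{A}}$ (the lemma preceding this one) and Lemma $(\ref{useful in next lemma})$, then invoking compatibility of $\nabla$ over $\mathcal{A}$, the Leibniz rule for $\delta = [N,\cdot]$ and the relations $[N,u]=0$, $u^2=u$ (which give $\delta(uT)=u\,\delta(T)$ and $(\delta(uT))^* = -\delta(T^*)u$), one lands on $d\langle\,\xi,\eta\,\rangle_\mathcal{A}\otimes(uT)^*uS \,\oplus\, \langle\,\xi,\eta\,\rangle_\mathcal{A}\otimes\delta\big((uT)^*uS\big)$, which is exactly $\widetilde d\,\langle\,\widetilde\xi,\widetilde\eta\,\rangle_{\varSigma^2\mathcal{A}}$ by Part $(3)$ of Theorem $(\ref{final thm})$. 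On that route the only genuine difficulty is organizational: keeping the decomposition $\Omega_{\varSigma^2 D}^1 \cong \Omega_D^1(\mathcal{A})\otimes\mathcal{S} \oplus \varSigma^2\mathcal{A}$ consistent throughout and tracking the sign flip $(\delta(uT))^* = -\delta(T^*)u$ forced by sesquilinearity, so that the $\varSigma^2\mathcal{A}$-summand contributions — which have no classical ($k=0$) counterpart — recombine precisely into $\delta((uT)^*uS)$. The affine argument of the first two paragraphs bypasses all of this, which is why I would write it up that way.
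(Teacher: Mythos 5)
Your argument is correct, but your primary route is not the one the paper takes: the paper's proof is exactly the ``bare-hands'' computation you relegate to your last paragraph (reduce to $\widetilde\xi=\xi\otimes uT$, $\widetilde\eta=\eta\otimes uS$, expand $\langle\widetilde\xi,\widetilde\nabla\widetilde\eta\rangle_{\varSigma^2\mathcal{A}}-\langle\widetilde\nabla\widetilde\xi,\widetilde\eta\rangle_{\varSigma^2\mathcal{A}}$ via Lemma (\ref{useful in next lemma}), invoke compatibility of $\nabla$, and match against $\widetilde d\langle\widetilde\xi,\widetilde\eta\rangle_{\varSigma^2\mathcal{A}}$ using $\delta(uT)=u\,\delta(T)$ and $(\delta(uT))^*=-\delta(T^*)u$). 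Your affine reduction is a genuinely different organization and it is sound: $\widetilde\nabla-\widetilde{\nabla_0}$ is a difference of two connections (Proposition (\ref{extended connection}) applied twice), hence automatically $\varSigma^2\mathcal{A}$-linear and equal to $\xi\otimes uT\mapsto\alpha\xi\otimes uT$ under $\Psi$; the proof of Lemma (\ref{useful in next lemma}) indeed uses only that its second argument lies in $p\mathcal{A}^n\otimes\Omega_D^1$, so it transfers admissibility of $\alpha$ to $\widetilde\alpha$; and Lemma (\ref{preservation of Grassmannian}) plus the standing fact (quoted from Connes earlier in the section) that the Grassmannian connection of a self-adjoint idempotent is compatible with the canonical Hermitian structure handles the base point --- here you are implicitly using that $p\otimes u$ is again self-adjoint, which holds since $p$ is chosen self-adjoint via Theorem (\ref{theorem of CG}) and $u$ is a projection. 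What your route buys is that all the $\delta$-bookkeeping is absorbed into the Grassmannian part, and as a by-product you see that $\widetilde\phi_{con}$ is affine and carries admissible perturbations to admissible perturbations, which is anyway wanted for Theorem (\ref{extension thm}); what the paper's route buys is self-containedness --- it never presupposes compatibility of $\nabla_0(\widetilde{\mathcal{E}})$, and in fact reproves it as the special case $\alpha=0$.
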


\begin{proof}
For $\, \widetilde \xi,\, \widetilde \eta \in (p\otimes u)(\mathcal{A}\otimes \mathcal{S})^n$, we have isomorphic elements $\sum \xi\otimes uT,\sum \eta\otimes uS \in p\mathcal{A}^n\otimes u\mathcal{S}\, $ respectively. Let 
$\xi = p(a_1,\ldots,a_n)$ and $\eta = p(b_1,\ldots,b_n)$. It is easy to see that,
\begin{center}
$\widetilde \nabla(\widetilde \xi) = \nabla(\xi)\otimes uT + (p\otimes u)(a_1\otimes \delta(uT),\ldots,a_n\otimes \delta(uT))$.
\end{center}
\begin{center}
$\widetilde \nabla(\widetilde \eta) = \nabla(\eta)\otimes uS + (p\otimes u)(b_1\otimes \delta(uS),\ldots,b_n\otimes \delta(uS))$.
\end{center}
Now,
\begin{eqnarray*}
&   & \langle\, \widetilde \xi,\widetilde \nabla \widetilde \eta \rangle_{\varSigma^2 \mathcal{A}} - \langle\, \widetilde \nabla \widetilde \xi,\widetilde \eta \rangle_{\varSigma^2 \mathcal{A}}\\
& = & \langle\, \xi \otimes uT,\nabla \eta \otimes uS \rangle_{\varSigma^2 \mathcal{A}} - \langle\, \nabla \xi \otimes uT,\eta \otimes uS \rangle_{\varSigma^2 \mathcal{A}}\\
&   & + \langle\, \xi \otimes uT,(p\otimes u)(b_1\otimes \delta(uS),\ldots,b_n\otimes \delta(uS)) \rangle_{\varSigma^2 \mathcal{A}}\\
&   & - \langle\, (p\otimes u)(a_1\otimes \delta(uT),\ldots,a_n\otimes \delta(uT),\eta\otimes uS \rangle_{\varSigma^2 \mathcal{A}}\\
& = & (\langle\, \xi,\nabla \eta\rangle_\mathcal{A} - \langle\, \nabla\xi,\eta\rangle_\mathcal{A})\otimes (uT)^*(uS)\\
&   & + \langle\, \xi\otimes uT,\eta\otimes u\delta(uS)\rangle_{\varSigma^2 \mathcal{A}} - \langle\, \xi\otimes u\delta(uT),\eta\otimes uS\rangle_{\varSigma^2 \mathcal{A}}
\, \, \, \, \, \, \, \, \, \, \, \, \, \, (by\quad Lemma\,\,(\,\ref{useful in next lemma}\,))\\
& = & d(\langle\, \xi,\eta \rangle_\mathcal{A})\otimes (uT)^*(uS) + \langle\, \xi,\eta \rangle_\mathcal{A}\otimes (uT)^*u\delta(uS) - \langle\, \xi,\eta \rangle_\mathcal{A}\otimes (u\delta(uT))^*uS
\end{eqnarray*}
Finally,
\begin{eqnarray*}
\widetilde d(\langle\, \widetilde \xi,\widetilde \eta\rangle_{\varSigma^2 \mathcal{A}}) & = & \widetilde d(\langle\, \xi\otimes uT,\eta \otimes uS \rangle_{\varSigma^2 \mathcal{A}}\\
& = & \widetilde d(\langle\, \xi,\eta \rangle_\mathcal{A}\otimes (uT)^*uS)\\
& = & d(\langle\, \xi,\eta \rangle_\mathcal{A})\otimes (uT)^*uS + \langle\, \xi,\eta \rangle_\mathcal{A}\otimes \delta((uT)^*uS)\\
& = & d(\langle\, \xi,\eta \rangle_\mathcal{A})\otimes (uT)^*uS + \langle\, \xi,\eta \rangle_\mathcal{A}\otimes ((uT)^*\delta(uS) + \delta((uT)^*)uS)\\
& = & d(\langle\, \xi,\eta \rangle_\mathcal{A})\otimes (uT)^*uS + \langle\, \xi,\eta \rangle_\mathcal{A}\otimes ((uT)^*\delta(uS) - (\delta(uT))^*uS)\\
\end{eqnarray*}
This shows compatibility of $\widetilde \nabla$.
\end{proof}

\textbf{Proof of the Theorem $\,$\ref{extension thm}$\,$:} Define $$\,\widetilde\phi_{con}:Con(\mathcal{E})\,\longrightarrow Con(\widetilde{\mathcal{E}})$$ $$\quad\quad\quad\nabla\longmapsto\widetilde\nabla$$ where
$\widetilde\nabla$ is as defined in Proposition ($\,$\ref{extended connection}$\,$). Lemma ($\,$\ref{extended compatibility}$\,$) proves that $\mathcal{R}an(\widetilde\phi_{con})\subseteq Con(\widetilde{\mathcal{E}})$ and
Lemma ($\,$\ref{preservation of Grassmannian}$\,$) proves preservation of the Grassmannian connection. It is easy to check that $\widetilde\phi_{con}$ is an affine morphism between $Con(\mathcal E)$ and $Con(\widetilde{
\mathcal E})$. To see injectivity, let $\widetilde\phi_{con}(\nabla_1)=\widetilde\phi_{con}(\nabla_2)$ and choose any $\xi=p(a_1,\ldots,a_n)\in \mathcal{E}$. Then $\widetilde\xi=(p\otimes u)(a_1\otimes u,\ldots,a_n\otimes u)
\in \widetilde{\mathcal{E}}$. Then it follows that $\nabla_1(\xi)\otimes u=\nabla_2(\xi)\otimes u\, \, $(use Lemma ($\,$\ref{imp iso}$\,$)) i,e. $\, \nabla_1(\xi)=\nabla_2(\xi)$. Now define $$\quad\quad\quad\quad\quad\psi:
Hom_\mathcal{A}\left(\mathcal{E},\mathcal{E}\otimes_\mathcal{A}\Omega_D^2(\mathcal{A})\right)\longrightarrow Hom_{\varSigma^2\mathcal{A}}\left(\widetilde{\mathcal{E}},\widetilde{\mathcal{E}}\otimes_{\varSigma^2\mathcal{A}}
\Omega_{\varSigma^2D}^2(\varSigma^2\mathcal{A})\right)$$  $$\psi(g)\left((p\otimes u)(a_1\otimes T_1,\ldots,a_n\otimes T_n)\right):=\sum_{i=1}^ng\left(p(0,\ldots,a_i,\ldots,0)\right)\otimes uT_i\,.$$ It is easy to see that
$\psi$ is a well-defined linear map because $\widetilde{\mathcal{E}}\otimes_{\varSigma^2\mathcal{A}}\Omega_{\varSigma^2D}^2(\varSigma^2\mathcal{A})\cong p\left(\Omega_D^2(\mathcal{A})\right)^n\otimes u\mathcal{S}$ as right
$\varSigma^2\mathcal{A}\, $-module~ (proof of this fact goes on the same route as described in Lemma ($\,$\ref{imp iso 0}$\,$)). Injectivity follows similarly as before. Finally, in view of Proposition
($\,$\ref{extended curvature}$\,$), we see that the diagram commutes and this completes the proof.$\quad\Square$

\bigskip

\end{document}